\documentclass[12pt]{amsart}
\usepackage{amsmath,amsthm,amsfonts}
\usepackage[utf8]{inputenc}
\usepackage[usenames, dvipsnames]{color}
\usepackage{hyperref}
\usepackage{multicol}
\usepackage{amscd}
\usepackage{faktor}
\usepackage{bbm}
\usepackage{bm}
\usepackage{pb-diagram}
\usepackage{amssymb}
\usepackage{tikz-cd}
\usepackage{braket}
\usepackage{amsfonts}
\usepackage{amsthm}
\usepackage{enumitem}
\usepackage{cancel}
\usepackage[english]{babel}
\usepackage[margin=1in]{geometry}
\usepackage{soul}
\input xy
\usepackage[all]{xy}

\newcommand{\cC}{{\mathcal C}}

\newcommand{\G}{{\mathcal G}}
\newcommand{\cH}{\mathcal{H}}

\newcommand{\cL}{{\mathcal L}}

\newcommand{\Id}{\operatorname{Id}}

\newcommand{\Co}{\operatorname{Col}}

\newcommand{\Z}{{\mathbb Z}}

\newcommand{\C}{{\mathbb{C} }}

\usepackage{comment}
\newcommand{\commentt}[1]

\theoremstyle{plain}
\numberwithin{equation}{section}

\newtheorem{theorem}{Theorem}[section]

\newtheorem{corollary}[theorem]{Corollary}
\newtheorem{proposition}[theorem]{Proposition}

\theoremstyle{definition}
\newtheorem{definition}[theorem]{Definition}
\newtheorem{example}[theorem]{Example}
\theoremstyle{remark}
\newtheorem{remark}[theorem]{Remark}

\title[Twisted quantum double as LTO]{Twisted Kitaev Quantum Double model as local topological order}

\author[S. Cui]{Shawn X. Cui}
\address{ Department of Mathematics, Department of Physics and Astronomy, Purdue University, West Lafayette, IN 47906, United States}
\email{cui177@purdue.edu}

\author[C. Galindo]{C\'esar Galindo}
\address{ Departamento de Matem\'aticas, Universidad de los Andes, Bogot\'a, Colombia}
\email{cn.galindo1116@uniandes.edu.co}

\author[D. Romero]{Diego Romero}
\address{ Departamento de Matem\'aticas, Universidad de los Andes, Bogot\'a, Colombia}
\email{da.romero12@uniandes.edu.co}

\begin{document}

\begin{abstract} 
We study the Twisted Kitaev Quantum Double model within the framework of Local Topological Order (LTO). We extend its definition to arbitrary 2D lattices, enabling an explicit characterization of the ground state space through the invariant spaces of monomial representations. We reformulate the LTO conditions to include general lattices and prove that the twisted model satisfies all four LTO axioms on any 2D lattice. As a corollary, we show that its ground state space is a quantum error-correcting code.
\end{abstract}

\maketitle

\section{Introduction}\label{intro}

Topological phases of matter are gapped quantum systems distinguished by long-range entanglement, ground state degeneracy dependent on the topology of the underlying manifold, protected gapless edge modes, and fractionalized quasi-particle excitations known as anyons \cite{wen2004quantum, nayak2008non}. These characteristics make topological phases promising candidates for robust quantum information storage and processing, as they inherently resist local perturbations—an essential quality for building large-scale fault-tolerant quantum computers \cite{kitaev2003fault, freedman2002modular}.

A significant class of two-dimensional topological phases can be effectively modeled using exactly solvable spin lattice models, where local Hilbert spaces are associated with vertices or edges of a lattice, and the Hamiltonian is constructed as a sum of commuting local projectors. Notable among these models are the Levin-Wen string-net models, which utilize a unitary fusion category as input \cite{levin2005string}, and the Kitaev quantum double models, based on finite groups, including the toric code as a special case for the group $\mathbb{Z}_2$ \cite{kitaev2003fault}. Extensions of Kitaev's model, such as the Twisted Kitaev Quantum Double model incorporating a cohomological twist \cite{twisted}, and generalizations employing finite-dimensional $C^*$-Hopf algebras \cite{buerschaper2013hierarchy, yan2022ribbon}, further enrich the landscape of exactly solvable models. Both the Levin-Wen and Kitaev families realize anomaly-free topological phases characterized by the Drinfeld center of fusion categories \cite{buerschaper2009mapping}.

An axiomatic formulation of \emph{topological order in spin lattice models} was provided by Bravyi and Hastings \cite{Bravyi_2010, bravyi2011short}, capturing the essential features of topological phases. They demonstrated that quantum systems satisfying these axioms exhibit gap stability under local perturbations. Although it is widely believed that both the Levin-Wen and Kitaev quantum double models satisfy these axioms, rigorous mathematical proofs have only been established for specific instances \cite{shaw2020, qiu2020ground, naaijkens2012anyons}.

Recently, Jones, Naaijkens, Penney, and Wallick \cite{LTO} extended the framework of topological order by proposing a set of axioms for \emph{local topological order} (LTO), formulated through nets of local ground state projections. Beyond the standard conditions of topological order, the LTO axioms introduce additional requirements via operators that act along the boundary of lattice regions. This approach leads to a local net of boundary $C^*$-algebras, situated in a lattice dimension one lower than that of the bulk. These boundary nets are instrumental in the bulk-boundary correspondence: the bulk topological order is recovered as the category of DHR bimodules of the boundary algebra. Notably, \cite{LTO} demonstrates that both the Levin-Wen model and the toric code satisfy the LTO axioms, wherein the boundary nets correspond to fusion categorical structures closely associated with subfactor theory.  Subsequently, in \cite{chuah2024boundary}, it was shown that the Kitaev quantum double model, for any finite group, also fulfills the LTO axioms, generalizing results from the toric code. For a finite group \( G \), the boundary algebra is identified as the fusion categorical net of \(\text{Hilb}(G)\) with generator \(\mathbb{C}[G]\) or \(\text{Rep}(G)\) with generator \(\mathbb{C}^G\), depending on the boundary’s rough or smooth type. It is essential to note that LTO conditions are stated in terms of the microscopic Hamiltonians, meaning that although the Levin-Wen and Kitaev's models realize equivalent topological orders, the fulfillment of LTO conditions for one model does not imply that for the other.

In this paper, we study the Twisted Kitaev Quantum Double model within the framework of local topological order. This model, which takes as input a finite group $G$ and a third cohomology class $\alpha \in H^3(G, U(1))$, realizes the topological order associated with the Drinfeld center of $\mathrm{Hilb}^\alpha(G)$—the category of $G$-graded finite-dimensional Hilbert spaces with an associativity constraint twisted by $\alpha$.
Unlike the original Kitaev model, which can be defined on an arbitrary 2D lattice, the twisted model has thus far been formulated only on triangulated lattices \cite{twisted}—a limitation from the perspective of topological order theory. Our first contribution is to extend the definition of the twisted model to arbitrary 2D lattices (Section~\ref{sec:general lattice}), providing greater flexibility for studying the model. Among other advantages, this allows for an explicit characterization of the ground state space in terms of monomial spaces and their monomial representations (Section~\ref{GS}). We then reformulate the LTO conditions from \cite{LTO} for arbitrary lattices (Section~\ref{sec:local quantum smooth}), extending their applicability beyond square lattices. Our main result is the following theorem.

\begin{theorem} The Twisted Kitaev Quantum Double model, defined on an arbitrary 2D lattice, satisfies the set of four LTO axioms \cite{LTO}. \end{theorem}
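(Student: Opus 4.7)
The plan is to verify each of the four LTO axioms (LTO1)--(LTO4) for the local ground state projectors $p_\Lambda$ associated to the twisted Kitaev Hamiltonian on an arbitrary 2D lattice. Throughout, I would rely crucially on the explicit characterization of the ground state subspace in terms of invariants of monomial representations developed in Section~\ref{GS}, together with the reformulation of the LTO axioms for arbitrary lattices in Section~\ref{sec:local quantum smooth}.

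First, for (LTO1) (nontriviality of each $p_\Lambda$) and (LTO2) (monotonicity under inclusion), I would proceed as follows. (LTO2) is essentially automatic once one establishes that all the Hamiltonian terms pairwise commute. I would verify the requisite commutativity on arbitrary 2D lattices by direct computation, using the twisted gauge transformations and the fact that the cocycle obstructions in overlapping regions cancel by the $3$-cocycle condition on $\alpha$. For (LTO1), I would explicitly exhibit a nonzero vector in the image of $p_\Lambda$ by averaging a suitable product state over the twisted gauge action; the monomial representation description from Section~\ref{GS} ensures that this average is nonvanishing for any finite $\Lambda$, since the ground state space is identified with an invariant subspace that is always nontrivial.

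The substantive work lies in verifying (LTO3) and (LTO4), which concern the boundary algebra and the rigidity of the ground state projection against local operators. For (LTO3), I would fix nested regions $\Lambda \Subset \Delta$ and characterize the compression $p_\Delta\, x\, p_\Delta$ of an operator $x$ supported on $\Lambda$, showing that it lies in the algebra generated by operators supported near $\partial \Delta$. The strategy is to decompose $x$ according to the $G$-grading induced by the monomial representation, then use the fact that the vertex and plaquette projectors in the buffer region $\Delta \setminus \Lambda$ force $p_\Delta x p_\Delta$ to couple only to boundary degrees of freedom. Here the twist $\alpha$ enters through $2$-cochain factors on the boundary, yielding the expected projective/twisted structure on $\partial \Delta$. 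For (LTO4), the injectivity of the inclusion of boundary algebras between nested regions, I would trace the restriction maps on boundary degrees of freedom and invoke the non-degeneracy of the pairing afforded by the monomial-representation picture.

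The main obstacle is (LTO3), and in particular tracking the cocycle data through the reduction of a bulk operator to a boundary operator on a general (non-square) 2D lattice. Unlike the untwisted case treated in \cite{chuah2024boundary}, the projectors here involve explicit $3$-cocycle factors coming from the triangulation-free extension of the twisted model developed in Section~\ref{sec:general lattice}, so a careful bookkeeping of the cochains attached to each face and edge is required. I would expect that, once this bookkeeping is done, the resulting boundary algebra takes the same fusion-categorical form as in the untwisted case but with $\mathrm{Hilb}(G)$ replaced by $\mathrm{Hilb}^\alpha(G)$, confirming the bulk--boundary correspondence via the Drinfeld center $\mathcal{Z}(\mathrm{Hilb}^\alpha(G))$ and, by the framework of \cite{LTO}, automatically yielding the quantum error-correcting code property as a corollary.
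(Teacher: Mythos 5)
There is a genuine gap, and it begins with a misidentification of what the four axioms actually assert. In Definition~\ref{LTO}, (LTO1) is \emph{not} the nontriviality of $p_\Lambda$: it is the statement that $p_\Delta\,\mathfrak{A}(\Lambda)\,p_\Delta=\mathbb{C}\,p_\Delta$ whenever $\Lambda\ll\Delta$, i.e.\ every operator supported on a completely surrounded region compresses to a scalar on the ground space. This is the Knill--Laflamme-type error-correction condition and is the technical heart of the whole argument; your proposed proof of ``LTO1'' (averaging a product state to exhibit a nonzero vector in the image of $p_\Lambda$) proves a different and much weaker statement and does not touch it. Likewise, (LTO2) is not ``monotonicity under inclusion'' (that is part of the definition of a net of projections); it is the identification $p_\Delta\,\mathfrak{A}(\Lambda)\,p_\Delta=\mathfrak{B}(\Lambda\Subset\Delta)\,p_\Delta$, so it is certainly not ``essentially automatic once the Hamiltonian terms commute.'' What you describe under the heading (LTO3) is in substance the paper's (LTO2); the actual (LTO3) is the independence $\mathfrak{B}(\Lambda_1\Subset\Delta)=\mathfrak{B}(\Lambda_2\Subset\Delta)$ for nested regions sharing the same boundary interval, which in the paper falls out immediately once the boundary algebra is computed explicitly. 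Because of this relabeling, two of the four axioms (the two hardest ones, LTO1 and LTO2 as actually stated) are left without an argument.

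Beyond the relabeling, the mechanisms that make the proof work are absent from the proposal. The paper's proofs of Theorems~\ref{teoLTO1} and~\ref{teoLTO2} rest on: (a) a holonomy-rigidity lemma (Proposition~\ref{proigualdad}) showing that the flatness constraints in the buffer $\Delta\setminus\Lambda$ force the left-translation part of an operator to be determined on each cycle by its values on all but one edge; (b) a layered gauge-fixing procedure (the recursively defined sets $\Lambda_j,E_j,F_j$) that peels off the left-translation part of an arbitrary $L_{c_1}P_{c_2}$ using vertex operators $A_{v}^{g}$, with the $3$-cocycle entering only through the scalars $\alpha_v^{g}$; and (c) for LTO2, the explicit construction of the boundary algebra $\mathfrak{D}(I)$ from partial vertex operators $C_v^{g}$ whose phases are the restrictions $\alpha_v^{g}|_\Lambda$, followed by the double inclusion $\mathfrak{D}(I)p_\Delta\subseteq\mathfrak{B}(\Lambda\Subset\Delta)\subseteq p_\Delta\mathfrak{A}(\Lambda)p_\Delta\subseteq\mathfrak{D}(I)p_\Delta$. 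Your proposed ``decomposition according to the $G$-grading induced by the monomial representation'' is not a substitute for these steps; the monomial-representation machinery of Section~\ref{GS} is used in the paper to describe the global ground space, not to prove the LTO axioms. Finally, the proposal does not address the smooth/rough dichotomy at all: for rough intervals the projectors $p_\Lambda$ must be redefined (requiring $\overline{s(v)}\subset\Lambda$), partial face operators $D_f^{g}$ must be introduced, and a ``sufficiently large'' hypothesis on $\Lambda$ is needed for the boundary operators to commute with the bulk vertex operators (Section~\ref{LTOROUGH}); without these adjustments the statements for rough boundaries are false as written.
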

 
We provide a detailed proof of this theorem for the case of smooth boundaries in Section~\ref{sec:local quantum smooth} and make the necessary adjustments to accommodate rough boundaries in Section~\ref{LTOROUGH}.

As a corollary, we establish that the ground state space of the Twisted Kitaev Quantum Double model defines a quantum error-correcting code. An interesting direction for future research is to explore encoding and decoding protocols, as well as error thresholds within this model, and to compare its performance with other topological codes, such as the toric code and the Abelian Kitaev model \cite{cui2024abelian}. We leave this investigation for future work.
\smallbreak
The paper is organized as follows. In Section~\ref{sec:preliminaries}, we provide the necessary background on combinatorial surfaces and revisit the Twisted Quantum Double model developed in \cite{twisted}. In Section~\ref{sec:general lattice}, we extend the definition of the Twisted Quantum Double model to general lattices, detailing the modifications required for non-triangular configurations and the associated operators. Section~\ref{GS} presents an algorithm to construct a basis for the ground state space, leveraging the fact that the vertex operators are monomial operators.

In Section~\ref{sec:local quantum smooth}, we study the axiomatization of Local Topological Order (LTO), originally proposed in \cite{LTO} for lattices where all faces are squares, and adapt this axiomatization to arbitrary lattices. In Section~\ref{SECCIONLTO}, we develop and prove the conditions under which the Twisted Quantum Double model for finite groups satisfies the LTO axioms for regions with smooth boundaries. By focusing on smooth boundaries, we facilitate a streamlined proof that bypasses the added complexities introduced by rough boundary considerations. In Section~\ref{LTOROUGH}, we extend these results to address the rough boundary case, which involves additional constraints due to the behavior of the twisted model near these boundaries.

\section{Preliminaries}\label{sec:preliminaries}

\subsection{Combinatorial Surfaces}

In this article, a \emph{surface} refers to a compact, oriented, piecewise-linear (PL) 2-dimensional manifold, assumed to have no boundary unless specified otherwise. In dimension 2, PL manifolds and topological manifolds are equivalent categories. For an oriented surface $\Sigma$, we denote its boundary, with the induced orientation, by $\partial \Sigma$.

A \emph{lattice} is a topological graph $\G$ formed by taking a disjoint union of intervals $[0,1]$ and identifying their endpoints. The intervals map to \emph{edges} of $\G$, denoted by $E(\G)$, and the endpoints map to \emph{vertices}, denoted by $V(\G)$.

A \emph{skeleton} of a surface $\Sigma$ is a lattice $\mathcal{L}$ embedded in $\Sigma$, where each vertex has at least two incident edges and every connected component of $\Sigma \setminus \mathcal{L}$ is an open disk. For instance, the vertex-edge structure of any triangulation of $\Sigma$ forms a skeleton.

\subsection{Twisted Quantum Double model}\label{TM}

We review the construction of a discrete model for 2D topological phases, known as the the Twisted Kitaev Quantum Double model (or Twisted Quantum Double model), introduced in \cite{twisted}. This model is defined by a finite group $G$, a 3-cocycle $\alpha \in Z^3(G, U(1))$, and an oriented surface $\Sigma$ equipped with a $\Delta$-complex structure.

To define the model, we consider a \emph{triangular lattice} $\mathcal{L}$, which is the 1-skeleton of a $\Delta$-complex structure on $\Sigma$. Specifically, $\mathcal{L}$ is a lattice where every face is bounded by exactly three edges. The vertices $V(\mathcal{L})$ correspond to the 0-simplices of the $\Delta$-complex, and the edges $E(\mathcal{L})$ correspond to its 1-simplices. The 2-simplices themselves are called the faces $\mathcal{F}(\mathcal{L})$. Each face is adjacent to exactly three edges (see \cite{hatcher} for the formal definition of a $\Delta$-complex).

Given a $\Delta$-complex of an oriented closed surface, we lift it to its universal cover. We then take the $\Delta$-subcomplex corresponding to a fundamental domain within the universal cover. Since the universal cover of an oriented closed surface is $\mathbb{R}^2$, this $\Delta$-subcomplex lies in the plane. We label the vertices of the $\Delta$-subcomplex with a total order $1 < 2 < \ldots < n$, and orient each edge to point from the vertex with the smaller label to the one with the larger label (see Figure~\ref{graph1}).

\begin{figure}[h!]
    \centering
\tikzset{every picture/.style={line width=0.75pt}} 

\begin{tikzpicture}[x=0.6pt,y=0.6pt,yscale=-1,xscale=1]

\draw    (140,74) -- (100,174) ;
\draw    (100,174) -- (211,109) ;
\draw    (100,174) -- (200,166) ;
\draw    (100,174) -- (152,259) ;
\draw    (140,74) -- (211,109) ;
\draw    (211,109) -- (200,166) ;
\draw    (152,259) -- (200,166) ;
\draw    (200,166) -- (314,175) ;
\draw    (152,259) -- (314,175) ;
\draw    (211,109) -- (314,175) ;
\draw    (314,175) -- (375,244) ;
\draw    (211,109) -- (308,71) ;
\draw    (308,71) -- (314,175) ;
\draw    (308,71) -- (379,88) ;
\draw    (375,244) -- (308,71) ;
\draw    (375,244) -- (379,88) ;
\draw    (379,88) -- (424,169) ;
\draw    (375,244) -- (424,169) ;
\draw    (152,259) -- (375,244) ;
\draw    (140,74) -- (308,71) ;
\draw  [fill={rgb, 255:red, 0; green, 0; blue, 0 }  ,fill opacity=1 ] (118.42,127.68) -- (117.44,118.55) -- (125.71,122.1) -- cycle ;
\draw  [fill={rgb, 255:red, 0; green, 0; blue, 0 }  ,fill opacity=1 ] (152.1,143.61) -- (156.53,135.57) -- (161.27,143.22) -- cycle ;
\draw  [fill={rgb, 255:red, 0; green, 0; blue, 0 }  ,fill opacity=1 ] (152.23,169.94) -- (159.81,164.77) -- (160.58,173.74) -- cycle ;
\draw  [fill={rgb, 255:red, 0; green, 0; blue, 0 }  ,fill opacity=1 ] (228,72.44) -- (220.07,77.06) -- (219.93,68.06) -- cycle ;
\draw  [fill={rgb, 255:red, 0; green, 0; blue, 0 }  ,fill opacity=1 ] (258.84,90.03) -- (253.2,97.27) -- (249.73,88.96) -- cycle ;
\draw  [fill={rgb, 255:red, 0; green, 0; blue, 0 }  ,fill opacity=1 ] (205.55,136.23) -- (208.74,144.83) -- (199.85,143.42) -- cycle ;
\draw  [fill={rgb, 255:red, 0; green, 0; blue, 0 }  ,fill opacity=1 ] (265.82,144.23) -- (256.67,143.5) -- (261.69,136.03) -- cycle ;
\draw  [fill={rgb, 255:red, 0; green, 0; blue, 0 }  ,fill opacity=1 ] (174.1,216.02) -- (173.94,206.84) -- (181.86,211.12) -- cycle ;
\draw  [fill={rgb, 255:red, 0; green, 0; blue, 0 }  ,fill opacity=1 ] (130.11,223.36) -- (122.14,218.81) -- (129.86,214.19) -- cycle ;
\draw  [fill={rgb, 255:red, 0; green, 0; blue, 0 }  ,fill opacity=1 ] (229.54,219.01) -- (234.2,211.1) -- (238.72,218.89) -- cycle ;
\draw  [fill={rgb, 255:red, 0; green, 0; blue, 0 }  ,fill opacity=1 ] (253.09,170.43) -- (244.7,174.15) -- (245.56,165.19) -- cycle ;
\draw  [fill={rgb, 255:red, 0; green, 0; blue, 0 }  ,fill opacity=1 ] (311.27,126.99) -- (306.24,119.32) -- (315.22,118.7) -- cycle ;
\draw  [fill={rgb, 255:red, 0; green, 0; blue, 0 }  ,fill opacity=1 ] (340.6,154.72) -- (333.47,148.94) -- (341.84,145.63) -- cycle ;
\draw  [fill={rgb, 255:red, 0; green, 0; blue, 0 }  ,fill opacity=1 ] (347.09,212.55) -- (338.48,209.36) -- (345.34,203.54) -- cycle ;
\draw  [fill={rgb, 255:red, 0; green, 0; blue, 0 }  ,fill opacity=1 ] (376.84,170) -- (372.66,161.82) -- (381.66,162.18) -- cycle ;
\draw  [fill={rgb, 255:red, 0; green, 0; blue, 0 }  ,fill opacity=1 ] (279.65,250.29) -- (272.11,255.52) -- (271.26,246.56) -- cycle ;
\draw  [fill={rgb, 255:red, 0; green, 0; blue, 0 }  ,fill opacity=1 ] (406.81,138.57) -- (399.18,133.47) -- (407.2,129.4) -- cycle ;
\draw  [fill={rgb, 255:red, 0; green, 0; blue, 0 }  ,fill opacity=1 ] (399.49,206.96) -- (399.61,197.78) -- (407.4,202.3) -- cycle ;
\draw  [fill={rgb, 255:red, 0; green, 0; blue, 0 }  ,fill opacity=1 ] (179.11,93.23) -- (169.95,93.83) -- (173.84,85.71) -- cycle ;
\draw  [fill={rgb, 255:red, 0; green, 0; blue, 0 }  ,fill opacity=1 ] (339.6,78.61) -- (348.4,76) -- (346.4,84.77) -- cycle ;
\draw (130,57.4) node [anchor=north west][inner sep=0.75pt]    {$1$};
\draw (377,247.4) node [anchor=north west][inner sep=0.75pt]    {$10$};
\draw (147,259.4) node [anchor=north west][inner sep=0.75pt]    {$9$};
\draw (424,161.4) node [anchor=north west][inner sep=0.75pt]    {$7$};
\draw (308,180.4) node [anchor=north west][inner sep=0.75pt]    {$8$};
\draw (196,169.4) node [anchor=north west][inner sep=0.75pt]    {$2$};
\draw (87,164.4) node [anchor=north west][inner sep=0.75pt]    {$5$};
\draw (204,88.4) node [anchor=north west][inner sep=0.75pt]    {$4$};
\draw (377,70.4) node [anchor=north west][inner sep=0.75pt]    {$3$};
\draw (299,52.4) node [anchor=north west][inner sep=0.75pt]    {$6$};
\end{tikzpicture}
    \caption{A portion of a triangular lattice (or $\Delta$-complex), where each edge is oriented according to the ordering of the vertices.}
    \label{graph1}
\end{figure}
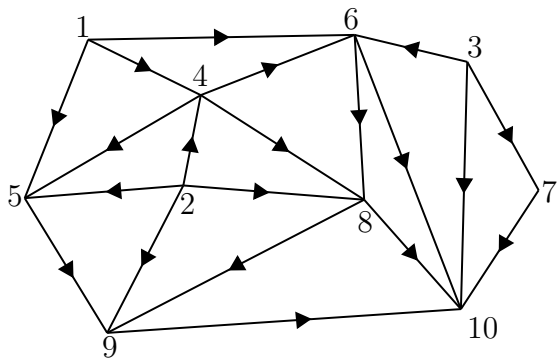

The ordering must be consistent across boundary edges of the fundamental domain, ensuring proper identification when reconstructing the $\Delta$-complex on the surface. For instance, in Figure~\ref{Dmaintorus}~(left), we show a $\Delta$-complex for the fundamental domain of a torus. When this $\Delta$-complex is mapped onto the torus, pairs of opposite edges are identified, so the orientations must align with these identifications. In Figure~\ref{Dmaintorus}~(right), we present one possible ordering of vertices that fulfills this consistency requirement.
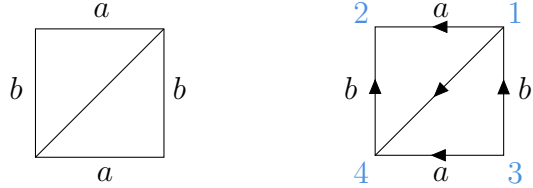
\begin{figure}[h!]
    \centering
\begin{tikzpicture}[x=0.6pt,y=0.6pt,yscale=-1,xscale=1]

\draw   (27,27) -- (108,27) -- (108,108) -- (27,108) -- cycle ;
\draw    (27,108) -- (108,27) ;
\draw    (241,107) -- (322,26) ;
\draw   (241.15,25.75) -- (322.15,25.75) -- (322.15,106.75) -- (241.15,106.75) -- cycle ;
\draw  [fill={rgb, 255:red, 0; green, 0; blue, 0 }  ,fill opacity=1 ] (277.9,25.73) -- (286.39,22.25) -- (286.39,29.22) -- cycle ;
\draw  [fill={rgb, 255:red, 0; green, 0; blue, 0 }  ,fill opacity=1 ] (276.9,106.73) -- (285.39,103.25) -- (285.39,110.22) -- cycle ;
\draw  [fill={rgb, 255:red, 0; green, 0; blue, 0 }  ,fill opacity=1 ] (241.14,71.73) -- (237.66,63.24) -- (244.63,63.24) -- cycle ;
\draw  [fill={rgb, 255:red, 0; green, 0; blue, 0 }  ,fill opacity=1 ] (322.14,71.98) -- (318.66,63.49) -- (325.63,63.49) -- cycle ;
\draw  [fill={rgb, 255:red, 0; green, 0; blue, 0 }  ,fill opacity=1 ] (278.5,69.5) -- (282.04,61.03) -- (286.97,65.96) -- cycle ;

\draw (62,9) node [anchor=north west][inner sep=0.75pt]   [align=left] {$a$};
\draw (64,112.5) node [anchor=north west][inner sep=0.75pt]   [align=left] {$a$};
\draw (9,56) node [anchor=north west][inner sep=0.75pt]   [align=left] {$b$};
\draw (112,56) node [anchor=north west][inner sep=0.75pt]   [align=left] {$b$};
\draw (226,9) node [anchor=north west][inner sep=0.75pt]  [color={rgb, 255:red, 74; green, 144; blue, 226 }  ,opacity=1 ] [align=left] {$2$};
\draw (323,9) node [anchor=north west][inner sep=0.75pt]  [color={rgb, 255:red, 74; green, 144; blue, 226 }  ,opacity=1 ] [align=left] {$1$};
\draw (226,110) node [anchor=north west][inner sep=0.75pt]  [color={rgb, 255:red, 74; green, 144; blue, 226 }  ,opacity=1 ] [align=left] {$4$};
\draw (323,110) node [anchor=north west][inner sep=0.75pt]  [color={rgb, 255:red, 74; green, 144; blue, 226 }  ,opacity=1 ] [align=left] {$3$};
\draw (276,9) node [anchor=north west][inner sep=0.75pt]   [align=left] {$a$};
\draw (276,112.5) node [anchor=north west][inner sep=0.75pt]   [align=left] {$a$};
\draw (220,56) node [anchor=north west][inner sep=0.75pt]   [align=left] {$b$};
\draw (329.75,56) node [anchor=north west][inner sep=0.75pt]   [align=left] {$b$};

\end{tikzpicture}
    \caption{Ordering of the vertices in a lattice on the torus, consistent with the identification of edges.}
    \label{Dmaintorus}
\end{figure}

We fix a finite group $G$ and define a $G$-\emph{coloring} of $\mathcal{L}$ as a map $\varphi: E(\mathcal{L}) \to G$ assigning an element of $G$ to each edge of $\mathcal{L}$. The element $\varphi(l) \in G$ assigned to an edge $l$ is called its $\varphi$-color. We denote by $\operatorname{Col}(\mathcal{L})$ the set of all $G$-colorings of $\mathcal{L}$.

The Hilbert space associated with $(\Sigma, \mathcal{L}, G)$ is defined as
\[
\mathcal{H}_{\text{tot}} := \operatorname{Span}\{\, \ket{\varphi} \mid \varphi \in \operatorname{Col}(\mathcal{L}) \,\},
\]
which is a finite-dimensional Hilbert space with an orthonormal basis given by all possible $G$-colorings of $\mathcal{L}$.

For a coloring $\varphi$ and an edge $l$ with endpoints $v$ and $u$ where $v < u$, we denote the $\varphi$-color of $l$ as $[vu]_\varphi$, or simply $[vu]$ when $\varphi$ is clear from context. Additionally, we define $[uv] := [vu]^{-1}$ for $v < u$.

\begin{definition}[Face Operator] \label{defface}
Let $f$ be a (triangular) face of $\mathcal{L}$, and let $\varphi$ be a $G$-coloring of $\mathcal{L}$. The face operator $B_f$ depends only on the $G$-colors of the edges forming the boundary $\partial f$. An oriented surface $\Sigma$  induces a counterclockwise cyclic order $<_c$ on the vertices of each face $f$ when drawn on a plane and denoted as $(v_1 <_c v_2 <_c v_3)$. The operator $B_f$ acts as the identity on $\ket{\varphi}$ if $[v_1v_2]_\varphi [v_2v_3]_\varphi [v_3v_1]_\varphi = e$, where $e$ is the identity element of $G$, and as zero otherwise (see Figure~\ref{facetwit}).
\end{definition}

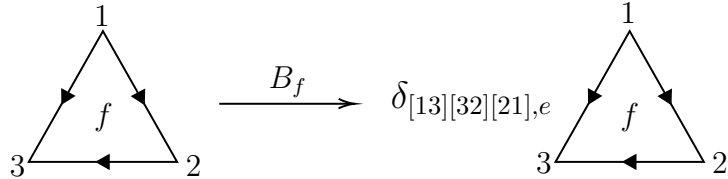
\begin{figure}[h!]
    \centering

\tikzset{every picture/.style={line width=0.75pt}} 

\begin{tikzpicture}[x=0.55pt,y=0.55pt,yscale=-1,xscale=1]

\draw   (100,31) -- (151,121) -- (49,121) -- cycle ;
\draw  [fill={rgb, 255:red, 0; green, 0; blue, 0 }  ,fill opacity=1 ] (72.09,80.51) -- (71.96,71.33) -- (79.87,75.64) -- cycle ;
\draw  [fill={rgb, 255:red, 0; green, 0; blue, 0 }  ,fill opacity=1 ] (128.12,80.79) -- (120.4,75.81) -- (128.37,71.61) -- cycle ;
\draw  [fill={rgb, 255:red, 0; green, 0; blue, 0 }  ,fill opacity=1 ] (96,121) -- (104,116.5) -- (104,125.5) -- cycle ;
\draw    (180,81) -- (271,81) ;
\draw [shift={(273,81)}, rotate = 180.62] [color={rgb, 255:red, 0; green, 0; blue, 0 }  ][line width=0.75]    (10.93,-3.29) .. controls (6.95,-1.4) and (3.31,-0.3) .. (0,0) .. controls (3.31,0.3) and (6.95,1.4) .. (10.93,3.29)   ;
\draw   (462.25,31) -- (513.25,121) -- (411.25,121) -- cycle ;
\draw  [fill={rgb, 255:red, 0; green, 0; blue, 0 }  ,fill opacity=1 ] (434.34,80.51) -- (434.21,71.33) -- (442.12,75.64) -- cycle ;
\draw  [fill={rgb, 255:red, 0; green, 0; blue, 0 }  ,fill opacity=1 ] (490.37,80.79) -- (482.65,75.81) -- (490.62,71.61) -- cycle ;
\draw  [fill={rgb, 255:red, 0; green, 0; blue, 0 }  ,fill opacity=1 ] (458.25,121) -- (466.25,116.5) -- (466.25,125.5) -- cycle ;

\draw (92,11.4) node [anchor=north west][inner sep=0.75pt]    {$1$};
\draw (34,114.4) node [anchor=north west][inner sep=0.75pt]    {$3$};
\draw (155,114.4) node [anchor=north west][inner sep=0.75pt]    {$2$};
\draw (212,53.4) node [anchor=north west][inner sep=0.75pt]    {$B_f$};
\draw (92,78.4) node [anchor=north west][inner sep=0.75pt]    {$f$};
\draw (454,78.4) node [anchor=north west][inner sep=0.75pt]    {$f$};
\draw (454,9.4) node [anchor=north west][inner sep=0.75pt]    {$1$};
\draw (396,112.4) node [anchor=north west][inner sep=0.75pt]    {$3$};
\draw (517,112.4) node [anchor=north west][inner sep=0.75pt]    {$2$};
\draw (296,63.4) node [anchor=north west][inner sep=0.75pt]    {\large{$\delta_{[13][ 32][ 21],e}$}};
\end{tikzpicture}
\caption{The action of operator $B_f$.}
\label{facetwit}
\end{figure}

The second type of operators are the vertex operators, which depend on a normalized 3-cocycle \(\alpha \in Z^3(G, U(1))\), a function \(\alpha: G^3 \to U(1)\) satisfying the following conditions for all \(g_1, g_2, g_3, g_4 \in G\):

\begin{enumerate}
    \item \(\alpha(g_2, g_3, g_4)\, \alpha(g_1, g_2 g_3, g_4)\, \alpha(g_1, g_2, g_3) = \alpha(g_1 g_2, g_3, g_4)\, \alpha(g_1, g_2, g_3 g_4)\),
    \item \(\alpha(e, g_1, g_2) = \alpha(g_1, e, g_2) = \alpha(g_1, g_2, e) = 1\),
\end{enumerate}
where \( e \) is the identity element of \( G \).

The Hamiltonian of the model is
\[
H = \sum_{v \in V(\mathcal{L})} \left(1 - A_v\right) + \sum_{f \in F(\mathcal{L})} \left(1 - B_f\right).
\]

\begin{definition}
A matrix \( M \) is called \emph{monomial} if each row and each column contains exactly one nonzero entry. This means \( M \) can be expressed as the product of a permutation matrix \( P \) and an invertible diagonal matrix \( D \). An operator \( A \) is \emph{monomial} if there exists a basis \( B \) such that its matrix representation \( [A]_B \) is monomial.
\end{definition}

\begin{definition}[Vertex Operator] \label{defvertex}
Let \( v \) be a vertex, and let \( s(v) \) denote its star, the set of edges connected to \( v \). The operator \( A_v \) is defined as
\[
A_v := \frac{1}{|G|} \sum_{g \in G} A_v^g,
\]
where each \( A_v^g = D_v^{g,\alpha} P_v^g \) is a monomial operator with respect to the basis of \( G \)-colorings. Here, \( P_v^g \) is a permutation matrix, and \( D_v^{g,\alpha} \) is an invertible diagonal matrix depending on \( g \) and the 3-cocycle \( \alpha \).

The permutation \( P_v^g \) acts on a coloring \( \varphi \in \operatorname{Col}(\mathcal{L}) \) by modifying the colors of edges in \( s(v) \) as follows:

\begin{itemize}
    \item If an edge points away from \( v \), multiply its color on the left by \( g \).
    \item If an edge points towards \( v \), multiply its color on the right by \( g^{-1} \).
\end{itemize}

The diagonal matrix \( D_v^{g,\alpha} \) has diagonal entries given by the function \( \alpha_v^g: \operatorname{Col}(\mathcal{L}) \to U(1) \), which depends on \( \alpha \).

To define \( \alpha_v^g \), we introduce a new vertex \( v' \) such that \( v' \) satisfies \( v' < v \) but is greater than all vertices less than \( v \). We assign the edge \( [v'v] = g \). For any vertex \( u \) connected to \( v \), we define
\[
[uv'] := [uv][vv'].
\]

Let \( \nabla_v \) be the set of faces \( f \) in \( \mathcal{L} \) such that \( v \in \partial f \). For each \( f \in \nabla_v \), let \( u_1 < u_2 < u_3 < u_4 \) be the vertices of \( \partial f \cup \{ v' \} \) in ascending order. We define
\[
\zeta(f) := \alpha\big([u_1 u_2], [u_2 u_3], [u_3 u_4]\big).
\]

The sign function \( \epsilon: \nabla_v \to \{ \pm 1 \} \) is defined as follows:

\begin{itemize}
    \item For each \( f \in \nabla_v \), consider the sequence \( (v', v_1, v_2, v_3) \), where \( v_1 \) is the smallest vertex in \( \partial f \) with respect to the total order, and \( (v_1, v_2, v_3) \) follows the cyclic order induced by the orientation of \( \Sigma \).
    \item The value \( \epsilon(f) \) is the sign of the permutation rearranging \( (v', v_1, v_2, v_3) \) into ascending order.
\end{itemize}

Finally, we define
\begin{equation} \label{starfase}
\alpha_v^g(\varphi) := \prod_{f \in \nabla_v} \zeta(f)^{\epsilon(f)}.
\end{equation}
\end{definition}

\begin{example}
Consider the vertex \( v_3 \) in Figure~\ref{phase}, where \( \nabla_{v_3} = \{ f_1, f_2, f_3, f_4 \} \). For face \( f_2 \), the vertices are ordered as \( v_1 < v_3' < v_3 < v_4 \). Walking along \( \partial f_2 \) counterclockwise gives the sequence \( (v_3', v_1, v_3, v_4) \), which leads to the scalar
\[
\alpha\big([v_1 v_3'], [v_3' v_3], [v_3 v_4]\big)^{-1} = \alpha\big([v_1 v_3][v_3 v_3']^{-1}, [v_3' v_3], [v_3 v_4]\big)^{-1} = \alpha\big([v_3 v_1] g^{-1}, g, [v_3 v_4]\big)^{-1}.
\]
Repeating this process for each face in \( \nabla_{v_3} \) yields
\begin{align*}
\alpha_{v_3}^g(\varphi) = &\ \alpha\big([v_1 v_2]_\varphi, [v_2 v_3]_\varphi g^{-1}, g\big) \alpha\big([v_1 v_3]_\varphi g^{-1}, g, [v_3 v_4]_\varphi\big)^{-1} \\
&\times \alpha\big([v_2 v_3]_\varphi g^{-1}, g, [v_3 v_5]_\varphi\big) \alpha\big(g, [v_3 v_4]_\varphi, [v_4 v_5]_\varphi\big)^{-1}.
\end{align*}
\end{example}

\begin{figure}[h!]
\centering
\begin{tikzpicture}[x=0.60pt,y=0.60pt,yscale=-1,xscale=1]

\draw  [fill={rgb, 255:red, 0; green, 0; blue, 0 }  ,fill opacity=1 ] (158,113) -- (162.5,105) -- (153.5,105) -- cycle ;
\draw  [fill={rgb, 255:red, 0; green, 0; blue, 0 }  ,fill opacity=1 ] (202.5,145.99) -- (194.52,141.51) -- (194.48,150.51) -- cycle ;
\draw  [fill={rgb, 255:red, 0; green, 0; blue, 0 }  ,fill opacity=1 ] (125.5,146) -- (118.5,141.5) -- (118.5,150.5) -- cycle ;
\draw  [fill={rgb, 255:red, 0; green, 0; blue, 0 }  ,fill opacity=1 ] (158,191) -- (162.5,183) -- (153.5,183) -- cycle ;
\draw    (158,65) -- (158,227) ;
\draw    (77,146) -- (200,146) -- (239,146) ;
\draw    (77,146) -- (158,227) ;
\draw    (158,65) -- (239,146) ;
\draw    (158,227) -- (239,146) ;
\draw    (77,146) -- (158,65) ;
\draw  [fill={rgb, 255:red, 0; green, 0; blue, 0 }  ,fill opacity=1 ] (201.33,108.83) -- (192.49,106.35) -- (199.85,99.99) -- cycle ;
\draw  [fill={rgb, 255:red, 0; green, 0; blue, 0 }  ,fill opacity=1 ] (121.83,189.83) -- (112.99,187.35) -- (119.35,180.99) -- cycle ;
\draw  [fill={rgb, 255:red, 0; green, 0; blue, 0 }  ,fill opacity=1 ] (194.17,189.83) -- (196.65,180.99) -- (203.01,187.35) -- cycle ;
\draw  [fill={rgb, 255:red, 0; green, 0; blue, 0 }  ,fill opacity=1 ] (113.67,108.83) -- (116.15,99.99) -- (122.51,106.35) -- cycle ;

\draw (150,50) node [anchor=north west][inner sep=0.75pt]   [align=left] {$v_1$};
\draw (125,107) node [anchor=north west][inner sep=0.75pt]   [align=left] {$f_1$};
\draw (169,107) node [anchor=north west][inner sep=0.75pt]   [align=left] {$f_2$};
\draw (125,170) node [anchor=north west][inner sep=0.75pt]   [align=left] {$f_3$};
\draw (169,170) node [anchor=north west][inner sep=0.75pt]   [align=left] {$f_4$};
\draw (158,147) node [anchor=north west][inner sep=0.75pt]   [align=left] {$v_3$};
\draw (57,139) node [anchor=north west][inner sep=0.75pt]   [align=left] {$v_2$};
\draw (242,139) node [anchor=north west][inner sep=0.75pt]   [align=left] {$v_4$};
\draw (150,228) node [anchor=north west][inner sep=0.75pt]   [align=left] {$v_5$};
\end{tikzpicture}
\caption{Example for the calculation of $\alpha_{v_3}^g$.}
\label{phase}
\end{figure}
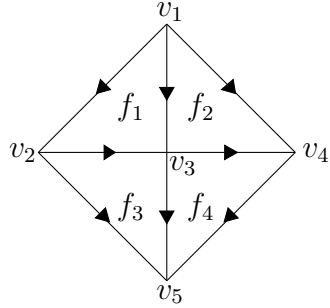

Figure~\ref{vertextiwst} illustrates the action of $A_v^g$ on a basis element $\ket{\varphi}$,  where $\varphi \in \operatorname{Col}(\mathcal{L})$. Note that vertices in $\mathcal{L}$ do not need to have the same number of incident edges. Figure~\ref{vertextiwst} illustrates an example where a vertex is incident to four edges, though this is not always the case.
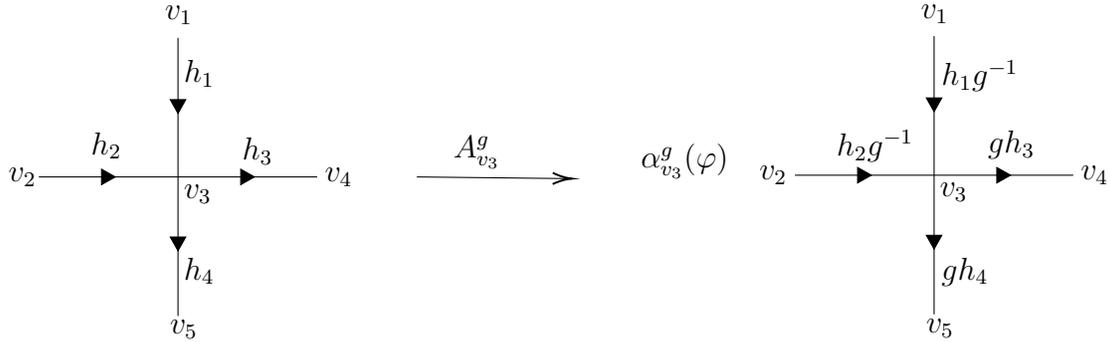
\begin{figure}[h!]
\centering
\begin{tikzpicture}[x=0.65pt,y=0.65pt,yscale=-1,xscale=1]

\draw  [fill={rgb, 255:red, 0; green, 0; blue, 0 }  ,fill opacity=1 ] (100,99) -- (104.5,91) -- (95.5,91) -- cycle ;
\draw  [fill={rgb, 255:red, 0; green, 0; blue, 0 }  ,fill opacity=1 ] (144.5,135.99) -- (136.5,131.51) -- (136.5,140.51) -- cycle ;
\draw  [fill={rgb, 255:red, 0; green, 0; blue, 0 }  ,fill opacity=1 ] (63.5,136) -- (55.5,131.5) -- (55.5,140.5) -- cycle ;
\draw  [fill={rgb, 255:red, 0; green, 0; blue, 0 }  ,fill opacity=1 ] (100,179) -- (104.5,171) -- (95.5,171) -- cycle ;
\draw    (100,55) -- (100,217) ;
\draw    (19,136) -- (181,136) ;
\draw    (239,136) -- (327,136.98) ;
\draw [shift={(329,137)}, rotate = 180.62] [color={rgb, 255:red, 0; green, 0; blue, 0 }  ][line width=0.75]    (10.93,-3.29) .. controls (6.95,-1.4) and (3.31,-0.3) .. (0,0) .. controls (3.31,0.3) and (6.95,1.4) .. (10.93,3.29)   ;
\draw  [fill={rgb, 255:red, 0; green, 0; blue, 0 }  ,fill opacity=1 ] (540,98) -- (544.5,90) -- (535.5,90) -- cycle ;
\draw  [fill={rgb, 255:red, 0; green, 0; blue, 0 }  ,fill opacity=1 ] (584.5,134.99) -- (576.5,130.51) -- (576.5,139.51) -- cycle ;
\draw  [fill={rgb, 255:red, 0; green, 0; blue, 0 }  ,fill opacity=1 ] (503.5,135) -- (495.5,130.5) -- (495.5,139.5) -- cycle ;
\draw  [fill={rgb, 255:red, 0; green, 0; blue, 0 }  ,fill opacity=1 ] (540,178) -- (544.5,170) -- (535.5,170) -- cycle ;
\draw    (540,54) -- (540,216) ;
\draw    (459,135) -- (621,135) ;

\draw (91,35) node [anchor=north west][inner sep=0.75pt]   [align=left] {$v_1$};
\draw (102,139) node [anchor=north west][inner sep=0.75pt]   [align=left] {$v_3$};
\draw (0,129) node [anchor=north west][inner sep=0.75pt]   [align=left] {$v_2$};
\draw (184,129) node [anchor=north west][inner sep=0.75pt]   [align=left] {$v_4$};
\draw (94,218) node [anchor=north west][inner sep=0.75pt]   [align=left] {$v_5$};
\draw (259,110.4) node [anchor=north west][inner sep=0.75pt]    {$A_{v_3}^g$};
\draw (368,115.4) node [anchor=north west][inner sep=0.75pt]    {$\alpha _{v_3}^{g}(\varphi)$};
\draw (102,66.4) node [anchor=north west][inner sep=0.75pt]    {$h_{1}$};
\draw (48,108.4) node [anchor=north west][inner sep=0.75pt]    {$h_{2}$};
\draw (136,111.4) node [anchor=north west][inner sep=0.75pt]    {$h_{3}$};
\draw (102,181.4) node [anchor=north west][inner sep=0.75pt]    {$h_{4}$};
\draw (531,34) node [anchor=north west][inner sep=0.75pt]   [align=left] {$v_1$};
\draw (542,138) node [anchor=north west][inner sep=0.75pt]   [align=left] {$v_3$};
\draw (437,128) node [anchor=north west][inner sep=0.75pt]   [align=left] {$v_2$};
\draw (624,128) node [anchor=north west][inner sep=0.75pt]   [align=left] {$v_4$};
\draw (534,217) node [anchor=north west][inner sep=0.75pt]   [align=left] {$v_5$};
\draw (543,66.4) node [anchor=north west][inner sep=0.75pt]    {$h_{1} g^{-1}$};
\draw (482,107.4) node [anchor=north west][inner sep=0.75pt]    {$h_{2} g^{-1}$};
\draw (570,107.4) node [anchor=north west][inner sep=0.75pt]    {$gh_{3}$};
\draw (543,181.4) node [anchor=north west][inner sep=0.75pt]    {$gh_{4}$};
\end{tikzpicture}
\caption{Action of operator $A_{v_3}^g$, over a coloring $\varphi$.}
\label{vertextiwst}
\end{figure}

\section{Twisted Quantum Double model on a General Lattice}
\label{sec:general lattice}

In this section, we extend the definition of the Twisted Quantum Double model to a lattice \(\mathcal{L}\) that does not need to be triangular. Recall that a lattice is a graph; hence, a cycle \(\mathcal{C}\) in \(\mathcal{L}\) corresponds to a sequence of edges \((l_1, l_2, \ldots, l_n)\) associated with vertices \((v_1, v_2, \ldots, v_n)\), where \(\partial(l_i) = \{v_i, v_{i+1}\}\) for \(i = 1, \ldots, n - 1\), and \(\partial(l_n) = \{v_n, v_1\}\), ensuring that the cycle is closed.

As in the triangular case, we assign a total order to the vertices of the planar graph constructed when we lift the lattice to its universal cover \(\mathcal{L}\), inducing an orientation on the edges: each edge is directed from the vertex with the smaller label to the one with the larger label.

For a finite group \(G\), let \(\operatorname{Col}(\mathcal{L})\) denote the set of all \(G\)-colorings of \(\mathcal{L}\). The total Hilbert space of the model is defined by
\[
\mathcal{H}_{\text{tot}} := \operatorname{Span} \{ \ket{\varphi} \mid \varphi \in \operatorname{Col}(\mathcal{L}) \}.
\]

To define the face and vertex operators, we embed \(\mathcal{L}\) into a triangular lattice \(\mathcal{L}'\) that corresponds to a $\Delta$-complex of \(\Sigma\) canonically related to \(\mathcal{L}\) and the total ordering of its vertices. This embedding allows us to define an isometry \(T: \mathcal{H}_{\text{tot}}(\mathcal{L}) \to \mathcal{H}_{\text{tot}}(\mathcal{L}')\). We then define the vertex operator \(A_v\) on \(\mathcal{H}_{\text{tot}}(\mathcal{L})\) by \(A_v := T^* A_v' T\), where \(A_v'\) is the vertex operator previously defined on \(\mathcal{H}_{\text{tot}}(\mathcal{L}')\) in Section~\ref{TM}. Similarly, we define the face operators in the same manner

To construct the triangular lattice \(\mathcal{L}'\) we proceed as follows: for each face \(f\) of \(\mathcal{L}\) with smallest vertex \(v \in \partial(f)\), we introduce edges \([v,u]\) for every \(u \in \partial(f) \setminus \{v\}\). These edges are called \emph{ghost edges} if \([v,u] \notin \mathcal{L}\) (see Figure~\ref{tringulando un lattice} for an example, where ghost edges are drawn as dotted lines).

\begin{figure}[h!]
    \centering
\begin{tikzpicture}[x=0.6pt,y=0.6pt,yscale=-1,xscale=1]

\draw  [dash pattern={on 0.84pt off 2.51pt}]  (430.78,18.63) -- (476.46,101.6) ;
\draw  [dash pattern={on 0.84pt off 2.51pt}]  (440.45,186.43) -- (476.99,102.55) ;
\draw  [dash pattern={on 0.84pt off 2.51pt}]  (477.99,101.55) -- (560.21,15.88) ;
\draw  [dash pattern={on 0.84pt off 2.51pt}]  (564.78,109.67) -- (614.26,31.21) ;
\draw  [dash pattern={on 0.84pt off 2.51pt}]  (611.22,171.9) -- (614.26,31.21) ;
\draw  [fill={rgb, 255:red, 255; green, 255; blue, 255 }  ,fill opacity=1 ] (456.82,65.83) -- (450.62,61.7) -- (457.69,58.43) -- cycle ;
\draw  [fill={rgb, 255:red, 255; green, 255; blue, 255 }  ,fill opacity=1 ] (517.21,60.28) -- (515.83,67.62) -- (510.06,62.45) -- cycle ;
\draw  [fill={rgb, 255:red, 255; green, 255; blue, 255 }  ,fill opacity=1 ] (456.03,149.93) -- (455.71,142.14) -- (462.18,145.14) -- cycle ;
\draw  [fill={rgb, 255:red, 255; green, 255; blue, 255 }  ,fill opacity=1 ] (586.61,74.69) -- (587.08,66.99) -- (593.27,70.8) -- cycle ;
\draw  [fill={rgb, 255:red, 255; green, 255; blue, 255 }  ,fill opacity=1 ] (612.62,105.16) -- (609.44,97.78) -- (616.29,98.12) -- cycle ;
\draw    (45.83,18.63) -- (15.38,108.81) ;
\draw    (15.38,108.81) -- (91.5,101.6) ;
\draw    (15.38,108.81) -- (54.96,185.47) ;
\draw    (45.83,18.63) -- (99.88,50.19) ;
\draw    (99.88,50.19) -- (91.5,101.6) ;
\draw    (91.5,101.6) -- (178.29,109.72) ;
\draw    (54.96,185.47) -- (178.29,109.72) ;
\draw    (178.29,109.72) -- (224.73,171.95) ;
\draw    (99.88,50.19) -- (173.72,15.92) ;
\draw    (173.72,15.92) -- (178.29,109.72) ;
\draw    (173.72,15.92) -- (227.77,31.25) ;
\draw    (227.77,31.25) -- (262.03,104.3) ;
\draw    (224.73,171.95) -- (262.03,104.3) ;
\draw    (54.96,185.47) -- (224.73,171.95) ;
\draw    (45.83,18.63) -- (173.72,15.92) ;
\draw  [fill={rgb, 255:red, 0; green, 0; blue, 0 }  ,fill opacity=1 ] (29.41,67.01) -- (28.47,59.23) -- (35.11,61.63) -- cycle ;
\draw  [fill={rgb, 255:red, 0; green, 0; blue, 0 }  ,fill opacity=1 ] (59.32,104.64) -- (65.07,100.1) -- (65.72,108.19) -- cycle ;
\draw  [fill={rgb, 255:red, 0; green, 0; blue, 0 }  ,fill opacity=1 ] (110.54,17.69) -- (103.2,21.55) -- (103.33,13.6) -- cycle ;
\draw  [fill={rgb, 255:red, 0; green, 0; blue, 0 }  ,fill opacity=1 ] (135.12,34.01) -- (130.93,40.09) -- (127.84,32.78) -- cycle ;
\draw  [fill={rgb, 255:red, 0; green, 0; blue, 0 }  ,fill opacity=1 ] (95.91,74.78) -- (97.98,82.47) -- (91.21,81.2) -- cycle ;
\draw  [fill={rgb, 255:red, 0; green, 0; blue, 0 }  ,fill opacity=1 ] (38.22,153.33) -- (32,148.71) -- (38.49,145.59) -- cycle ;
\draw  [fill={rgb, 255:red, 0; green, 0; blue, 0 }  ,fill opacity=1 ] (113.86,149.13) -- (117.5,142.64) -- (121.29,149.48) -- cycle ;
\draw  [fill={rgb, 255:red, 0; green, 0; blue, 0 }  ,fill opacity=1 ] (133.44,105.27) -- (127.12,108.98) -- (127.58,100.88) -- cycle ;
\draw  [fill={rgb, 255:red, 0; green, 0; blue, 0 }  ,fill opacity=1 ] (176.2,66.42) -- (172.39,59.41) -- (179.24,59.03) -- cycle ;
\draw  [fill={rgb, 255:red, 0; green, 0; blue, 0 }  ,fill opacity=1 ] (203.53,143.49) -- (196.54,140.42) -- (202.43,135.93) -- cycle ;
\draw  [fill={rgb, 255:red, 0; green, 0; blue, 0 }  ,fill opacity=1 ] (152.39,177.54) -- (146.61,182.02) -- (146.01,173.94) -- cycle ;
\draw  [fill={rgb, 255:red, 0; green, 0; blue, 0 }  ,fill opacity=1 ] (249.34,76.75) -- (243.09,72.1) -- (249.49,68.96) -- cycle ;
\draw  [fill={rgb, 255:red, 0; green, 0; blue, 0 }  ,fill opacity=1 ] (243.21,138.41) -- (243.47,130.65) -- (249.73,134.21) -- cycle ;
\draw  [fill={rgb, 255:red, 0; green, 0; blue, 0 }  ,fill opacity=1 ] (76.02,36.24) -- (67.91,35.65) -- (71.45,29.51) -- cycle ;
\draw  [fill={rgb, 255:red, 0; green, 0; blue, 0 }  ,fill opacity=1 ] (197.82,22.68) -- (204.87,20.64) -- (202.49,28.34) -- cycle ;
\draw    (431.78,18.63) -- (401.33,108.81) ;
\draw    (401.33,108.81) -- (477.46,101.6) ;
\draw    (401.33,108.81) -- (440.92,185.47) ;
\draw    (431.78,18.63) -- (485.83,50.19) ;
\draw    (485.83,50.19) -- (477.46,101.6) ;
\draw    (477.46,101.6) -- (564.24,109.72) ;
\draw    (440.92,185.47) -- (564.24,109.72) ;
\draw    (564.24,109.72) -- (610.68,171.95) ;
\draw    (485.83,50.19) -- (559.68,15.92) ;
\draw    (559.68,15.92) -- (564.24,109.72) ;
\draw    (559.68,15.92) -- (613.73,31.25) ;
\draw    (613.73,31.25) -- (647.99,104.3) ;
\draw    (610.68,171.95) -- (647.99,104.3) ;
\draw    (440.92,185.47) -- (610.68,171.95) ;
\draw    (431.78,18.63) -- (559.68,15.92) ;
\draw  [fill={rgb, 255:red, 0; green, 0; blue, 0 }  ,fill opacity=1 ] (415.36,67.01) -- (414.43,59.23) -- (421.07,61.63) -- cycle ;
\draw  [fill={rgb, 255:red, 0; green, 0; blue, 0 }  ,fill opacity=1 ] (445.27,104.64) -- (451.02,100.1) -- (451.67,108.19) -- cycle ;
\draw  [fill={rgb, 255:red, 0; green, 0; blue, 0 }  ,fill opacity=1 ] (496.49,17.69) -- (489.16,21.55) -- (489.28,13.6) -- cycle ;
\draw  [fill={rgb, 255:red, 0; green, 0; blue, 0 }  ,fill opacity=1 ] (521.07,34.01) -- (516.88,40.09) -- (513.79,32.78) -- cycle ;
\draw  [fill={rgb, 255:red, 0; green, 0; blue, 0 }  ,fill opacity=1 ] (481.87,74.78) -- (483.93,82.47) -- (477.16,81.2) -- cycle ;
\draw  [fill={rgb, 255:red, 0; green, 0; blue, 0 }  ,fill opacity=1 ] (424.17,153.33) -- (417.95,148.71) -- (424.45,145.59) -- cycle ;
\draw  [fill={rgb, 255:red, 0; green, 0; blue, 0 }  ,fill opacity=1 ] (499.81,149.13) -- (503.45,142.64) -- (507.24,149.48) -- cycle ;
\draw  [fill={rgb, 255:red, 0; green, 0; blue, 0 }  ,fill opacity=1 ] (519.39,105.27) -- (513.08,108.98) -- (513.53,100.88) -- cycle ;
\draw  [fill={rgb, 255:red, 0; green, 0; blue, 0 }  ,fill opacity=1 ] (562.16,66.42) -- (558.34,59.41) -- (565.19,59.03) -- cycle ;
\draw  [fill={rgb, 255:red, 0; green, 0; blue, 0 }  ,fill opacity=1 ] (589.49,143.49) -- (582.5,140.42) -- (588.39,135.93) -- cycle ;
\draw  [fill={rgb, 255:red, 0; green, 0; blue, 0 }  ,fill opacity=1 ] (538.34,177.54) -- (532.56,182.02) -- (531.97,173.94) -- cycle ;
\draw  [fill={rgb, 255:red, 0; green, 0; blue, 0 }  ,fill opacity=1 ] (635.3,76.75) -- (629.04,72.1) -- (635.45,68.96) -- cycle ;
\draw  [fill={rgb, 255:red, 0; green, 0; blue, 0 }  ,fill opacity=1 ] (629.16,138.41) -- (629.43,130.65) -- (635.68,134.21) -- cycle ;
\draw  [fill={rgb, 255:red, 0; green, 0; blue, 0 }  ,fill opacity=1 ] (461.98,36.24) -- (453.87,35.65) -- (457.41,29.51) -- cycle ;
\draw  [fill={rgb, 255:red, 0; green, 0; blue, 0 }  ,fill opacity=1 ] (583.77,22.68) -- (590.82,20.64) -- (588.45,28.34) -- cycle ;
\draw (425.27,-3) node [anchor=north west][inner sep=0.75pt]    {$1$};
\draw (612.47,174.22) node [anchor=north west][inner sep=0.75pt]    {$10$};
\draw (438.21,188.04) node [anchor=north west][inner sep=0.75pt]    {$9$};
\draw (652,96.66) node [anchor=north west][inner sep=0.75pt]    {$7$};
\draw (557.78,115.8) node [anchor=north west][inner sep=0.75pt]    {$8$};
\draw (481.24,103.81) node [anchor=north west][inner sep=0.75pt]    {$2$};
\draw (387,96.66) node [anchor=north west][inner sep=0.75pt]    {$5$};
\draw (481.61,25.82) node [anchor=north west][inner sep=0.75pt]    {$4$};
\draw (613.31,10.59) node [anchor=north west][inner sep=0.75pt]    {$3$};
\draw (553.93,-6) node [anchor=north west][inner sep=0.75pt]    {$6$};
\draw (36.78,-3) node [anchor=north west][inner sep=0.75pt]    {$1$};
\draw (223.98,174.27) node [anchor=north west][inner sep=0.75pt]    {$10$};
\draw (49.72,188.09) node [anchor=north west][inner sep=0.75pt]    {$9$};
\draw (263,96.7) node [anchor=north west][inner sep=0.75pt]    {$7$};
\draw (170.29,115.84) node [anchor=north west][inner sep=0.75pt]    {$8$};
\draw (92.75,103.85) node [anchor=north west][inner sep=0.75pt]    {$2$};
\draw (0,96.7) node [anchor=north west][inner sep=0.75pt]    {$5$};
\draw (93.12,25.87) node [anchor=north west][inner sep=0.75pt]    {$4$};
\draw (224.82,10.63) node [anchor=north west][inner sep=0.75pt]    {$3$};
\draw (165.44,-6) node [anchor=north west][inner sep=0.75pt]    {$6$};
\end{tikzpicture}
    \caption{On the left, a generic lattice \(\mathcal{L}\) with ordered vertices. On the right, the corresponding canonical triangular lattice \(\mathcal{L}'\) obtained by adding ghost edges (dotted lines).}
    \label{tringulando un lattice}
\end{figure}
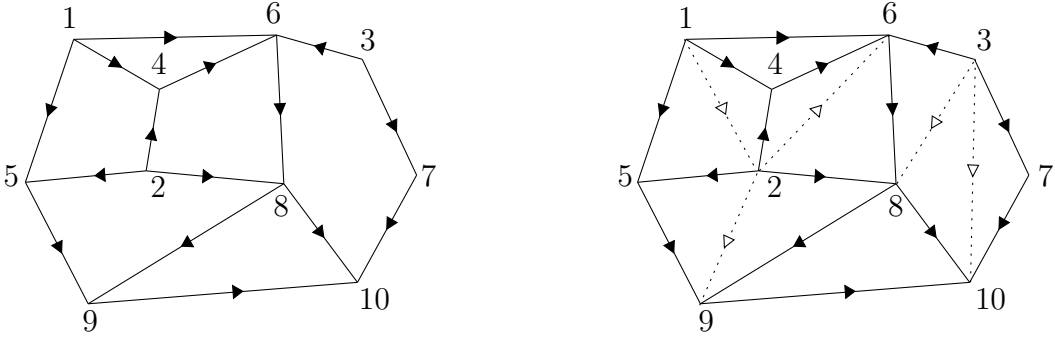

We define an injective map \(\iota: \operatorname{Col}(\mathcal{L}) \to \operatorname{Col}(\mathcal{L}')\) by extending a given \(G\)-coloring \(\varphi\) from \(\mathcal{L}\) to the triangular lattice \(\mathcal{L}'\). For each face \(f\) with smallest vertex \(v\), and for each \(u \in \partial(f) \setminus \{v\}\), we move counterclockwise along the boundary \(\partial f\) through the vertices \(v = v^1, v^2, \dots, v^n = u\).
The extended coloring is then defined as:
\[
[v u]_\varphi := [v^1 v^2]_\varphi [v^2 v^3]_\varphi \cdots [v^{n-1} v^n]_\varphi.
\]
Additionally, we define a surjective map \(\pi: \operatorname{Col}(\mathcal{L}') \to \operatorname{Col}(\mathcal{L})\), given by \(\varphi \mapsto \varphi|_{\mathcal{L}}\), which is the restriction of \(\varphi\) to the edges of \(\mathcal{L}\). Note that \(\pi \circ \iota = \operatorname{id}_{\operatorname{Col}(\mathcal{L})}\).

The map \(\iota: \operatorname{Col}(\mathcal{L}) \to \operatorname{Col}(\mathcal{L}')\) naturally induces an isometry \(T: \mathcal{H}_{\text{tot}}(\mathcal{L}) \to \mathcal{H}_{\text{tot}}(\mathcal{L}')\). The map \(\pi: \operatorname{Col}(\mathcal{L}') \to \operatorname{Col}(\mathcal{L})\) defines the adjoint, \(T^*: \mathcal{H}_{\text{tot}}(\mathcal{L}') \to \mathcal{H}_{\text{tot}}(\mathcal{L})\).

The vertex operator \(A_v^g\) of the model associated to \(\mathcal{L}\) is defined as:
\[
A_v^g := T^* \circ {A'}_v^{g} \circ T,
\]
where \({A'}_v^{g}\) are the vertex operators over \(\mathcal{L}'\) defined in Section~\ref{TM}. Similarly, we define \(A_v := \frac{1}{|G|} \sum_{g \in G} A_v^g\).

In order to define the face operator, let us introduce the notion of holonomy with respect to a \(G\)-coloring around a cycle in \(\mathcal{L}\). Given a coloring \(\varphi\) and a cycle determined by the sequence of vertices \((v_1, v_2, \ldots, v_n)\), the \emph{holonomy} is defined as
\[
\varphi_\mathcal{C} = [v_1 v_2]_\varphi [v_2 v_3]_\varphi \cdots [v_{n-1} v_n]_\varphi [v_n v_1]_\varphi.
\]

Given a cycle \(\mathcal{C} \subset \mathcal{L}\), we define the \emph{holonomy operator} \( B_\mathcal{C} \) as follows: \( B_\mathcal{C} \ket{\varphi} = \ket{\varphi} \) if \( \varphi_\mathcal{C} = e \) (the identity element of \( G \)), and \( B_\mathcal{C} \ket{\varphi} = 0 \) otherwise. The face operators \(B_f\) are defined as the holonomy operators for the cycle determined by \(\partial f\), with the orientation induced by \(\Sigma\) (counterclockwise when drawn on a plane). That is, \(B_f = B_{\partial f}\) (see Figure~\ref{faceoperatorsquare} for an example of the face operator for a square face).

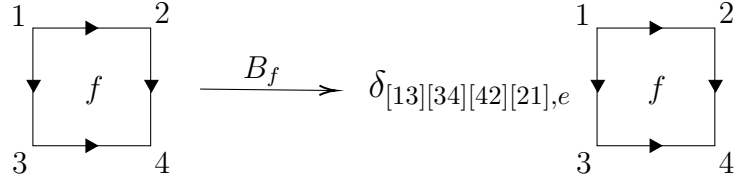
\begin{figure}[h!]
    \centering
\begin{tikzpicture}[x=0.55pt,y=0.55pt,yscale=-1,xscale=1]

\draw  [fill={rgb, 255:red, 0; green, 0; blue, 0 }  ,fill opacity=1 ] (97.5,66) -- (102,58) -- (93,58) -- cycle ;
\draw   (16,22) -- (97,22) -- (97,103) -- (16,103) -- cycle ;
\draw  [fill={rgb, 255:red, 0; green, 0; blue, 0 }  ,fill opacity=1 ] (16.5,66) -- (21,58) -- (12,58) -- cycle ;
\draw  [fill={rgb, 255:red, 0; green, 0; blue, 0 }  ,fill opacity=1 ] (60.5,22.02) -- (52.5,17.48) -- (52.5,26.48) -- cycle ;
\draw  [fill={rgb, 255:red, 0; green, 0; blue, 0 }  ,fill opacity=1 ] (60.5,103) -- (52.5,98.5) -- (52.5,107.5) -- cycle ;
\draw    (131,64) -- (222,64.98) ;
\draw [shift={(224,65)}, rotate = 180.62] [color={rgb, 255:red, 0; green, 0; blue, 0 }  ][line width=0.75]    (10.93,-3.29) .. controls (6.95,-1.4) and (3.31,-0.3) .. (0,0) .. controls (3.31,0.3) and (6.95,1.4) .. (10.93,3.29)   ;
\draw  [fill={rgb, 255:red, 0; green, 0; blue, 0 }  ,fill opacity=1 ] (485.5,66) -- (490,58) -- (481,58) -- cycle ;
\draw   (404,22) -- (485,22) -- (485,103) -- (404,103) -- cycle ;
\draw  [fill={rgb, 255:red, 0; green, 0; blue, 0 }  ,fill opacity=1 ] (404.5,66) -- (409,58) -- (400,58) -- cycle ;
\draw  [fill={rgb, 255:red, 0; green, 0; blue, 0 }  ,fill opacity=1 ] (448.5,22.02) -- (440.5,17.48) -- (440.5,26.48) -- cycle ;
\draw  [fill={rgb, 255:red, 0; green, 0; blue, 0 }  ,fill opacity=1 ] (448.5,103) -- (440.5,98.5) -- (440.5,107.5) -- cycle ;

\draw (-1,5) node [anchor=north west][inner sep=0.75pt]   [align=left] {$1$};
\draw (98,3) node [anchor=north west][inner sep=0.75pt]   [align=left] {$2$};
\draw (0,106) node [anchor=north west][inner sep=0.75pt]   [align=left] {$3$};
\draw (98,106) node [anchor=north west][inner sep=0.75pt]   [align=left] {$4$};
\draw (159,39.4) node [anchor=north west][inner sep=0.75pt]    {$B_f$};
\draw (245,49.4) node [anchor=north west][inner sep=0.75pt]  [font=\large]  {$\delta_{[ 13][ 34][42][21],e}$};
\draw (387,4) node [anchor=north west][inner sep=0.75pt]   [align=left] {$1$};
\draw (486,3) node [anchor=north west][inner sep=0.75pt]   [align=left] {$2$};
\draw (388,106) node [anchor=north west][inner sep=0.75pt]   [align=left] {$3$};
\draw (486,106) node [anchor=north west][inner sep=0.75pt]   [align=left] {$4$};
\draw (50,53.4) node [anchor=north west][inner sep=0.75pt]    {$f$};
\draw (437,53.4) node [anchor=north west][inner sep=0.75pt]    {$f$};
\end{tikzpicture}
    \caption{The action of operator $B$ on a square face $f$.}
    \label{faceoperatorsquare}
\end{figure}

Thus, the Hamiltonian of the model on the general lattice \(\mathcal{L}\) is defined as
\[
H = \sum_{v \in V(\mathcal{L})} \left(1 - A_v\right) + \sum_{f \in F(\mathcal{L})} \left(1 - B_f\right).
\]

\begin{example}
Let us illustrate this with an example. Consider the vertex \(v_5\) in Figure~\ref{phase3}. Since \(A_{v_5}^g := T^* \circ {A'}_{v_5}^{g} \circ T\), we first examine the operator \({A'}_{v_5}^{g}\) in the triangulated lattice \(\mathcal{L}'\). Note that when we consider the vertex \(v_5\) in \(\mathcal{L}\), it belongs to four faces, but in \(\mathcal{L}'\), this vertex belongs to five faces due to the addition of ghost edges. Thus, \(\nabla_{v_5} = \{ f_{1,1}, f_{1,2}, f_{2,2}, f_3, f_4\}\). The function \(\alpha_{v_5}^g(\varphi)\) will then be the product of the following five scalars, one for each face in \(\mathcal{L}'\):

\begin{align*}
   \zeta(f_{1,1})^{\epsilon(f_{1,1})} & = \alpha\left([v_1 v_2]_\varphi, [v_2 v_5]_\varphi g^{-1}, g\right),\\
   \zeta(f_{1,2})^{\epsilon(f_{1,2})} & = \alpha\left([v_1 v_4]_\varphi, [v_4 v_5]_\varphi g^{-1}, g\right)^{-1},\\
   \zeta(f_{2,2})^{\epsilon(f_{2,2})} & = \alpha\left([v_2 v_5]_\varphi g^{-1}, g, [v_5 v_6]_\varphi\right)^{-1},\\
   \zeta(f_3)^{\epsilon(f_3)} & = \alpha\left([v_4 v_5]_\varphi g^{-1}, g, [v_5 v_7]_\varphi\right),\\
   \zeta(f_4)^{\epsilon(f_4)} & = \alpha\left(g, [v_5 v_6]_\varphi, [v_6 v_7]_\varphi\right).
\end{align*}
Therefore, \(\alpha_{v_5}^g(\varphi) = \prod_{f \in \nabla_{v_5}} \zeta(f)^{\epsilon(f)}\).

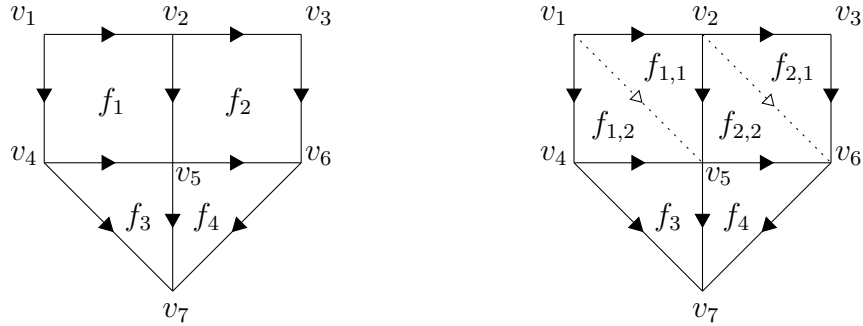
\begin{figure}[h!]
\centering
\begin{tikzpicture}[x=0.6pt,y=0.6pt,yscale=-1,xscale=1]

\draw    (100,25) -- (100,66.53) -- (100,187) ;
\draw    (19,106) -- (142,106) -- (181,106) ;
\draw  [fill={rgb, 255:red, 0; green, 0; blue, 0 }  ,fill opacity=1 ] (100,67.53) -- (95.5,59.53) -- (104.5,59.53) -- cycle ;
\draw  [fill={rgb, 255:red, 0; green, 0; blue, 0 }  ,fill opacity=1 ] (144.5,106) -- (136.5,110.5) -- (136.5,101.5) -- cycle ;
\draw  [fill={rgb, 255:red, 0; green, 0; blue, 0 }  ,fill opacity=1 ] (63.5,106) -- (55.5,110.5) -- (55.5,101.5) -- cycle ;
\draw    (19,106) -- (100,187) ;
\draw    (100,187) -- (181,106) ;
\draw  [fill={rgb, 255:red, 0; green, 0; blue, 0 }  ,fill opacity=1 ] (62.33,149.33) -- (53.49,146.85) -- (59.85,140.49) -- cycle ;
\draw  [fill={rgb, 255:red, 0; green, 0; blue, 0 }  ,fill opacity=1 ] (137.67,149.33) -- (140.15,140.49) -- (146.51,146.85) -- cycle ;
\draw    (181,25) -- (181,106) ;
\draw    (19,25) -- (19,106) ;
\draw    (19,25) -- (142,25) -- (181,25) ;
\draw  [fill={rgb, 255:red, 0; green, 0; blue, 0 }  ,fill opacity=1 ] (144.5,25.01) -- (136.5,29.51) -- (136.5,20.51) -- cycle ;
\draw  [fill={rgb, 255:red, 0; green, 0; blue, 0 }  ,fill opacity=1 ] (63.5,25) -- (55.5,29.5) -- (55.5,20.5) -- cycle ;
\draw  [fill={rgb, 255:red, 0; green, 0; blue, 0 }  ,fill opacity=1 ] (19,67.53) -- (14.5,59.53) -- (23.5,59.53) -- cycle ;
\draw  [fill={rgb, 255:red, 0; green, 0; blue, 0 }  ,fill opacity=1 ] (181,67.53) -- (176.5,59.53) -- (185.5,59.53) -- cycle ;
\draw  [fill={rgb, 255:red, 0; green, 0; blue, 0 }  ,fill opacity=1 ] (100,146.53) -- (95.5,138.53) -- (104.5,138.53) -- cycle ;
\draw    (434,25) -- (434,66.53) -- (434,187) ;
\draw    (353,106) -- (476,106) -- (515,106) ;
\draw  [fill={rgb, 255:red, 0; green, 0; blue, 0 }  ,fill opacity=1 ] (434,67.53) -- (429.5,59.53) -- (438.5,59.53) -- cycle ;
\draw  [fill={rgb, 255:red, 0; green, 0; blue, 0 }  ,fill opacity=1 ] (478.5,106) -- (470.5,110.5) -- (470.5,101.5) -- cycle ;
\draw  [fill={rgb, 255:red, 0; green, 0; blue, 0 }  ,fill opacity=1 ] (397.5,106) -- (389.5,110.5) -- (389.5,101.5) -- cycle ;
\draw    (353,106) -- (434,187) ;
\draw  [dash pattern={on 0.84pt off 2.51pt}]  (434,25) -- (515,106) ;
\draw    (434,187) -- (515,106) ;
\draw  [fill={rgb, 255:red, 0; green, 0; blue, 0 }  ,fill opacity=1 ] (396.33,149.33) -- (387.49,146.85) -- (393.85,140.49) -- cycle ;
\draw  [fill={rgb, 255:red, 0; green, 0; blue, 0 }  ,fill opacity=1 ] (471.67,149.33) -- (474.15,140.49) -- (480.51,146.85) -- cycle ;
\draw    (515,25) -- (515,106) ;
\draw    (353,25) -- (353,106) ;
\draw    (353,25) -- (476,25) -- (515,25) ;
\draw  [dash pattern={on 0.84pt off 2.51pt}]  (353,25) -- (434,106) ;
\draw  [fill={rgb, 255:red, 255; green, 255; blue, 255 }  ,fill opacity=1 ] (395.74,67.74) -- (388.5,66.01) -- (394.01,60.5) -- cycle ;
\draw  [fill={rgb, 255:red, 255; green, 255; blue, 255 }  ,fill opacity=1 ] (478.91,70.06) -- (471.67,68.33) -- (477.18,62.82) -- cycle ;
\draw  [fill={rgb, 255:red, 0; green, 0; blue, 0 }  ,fill opacity=1 ] (478.5,25.01) -- (470.5,29.51) -- (470.5,20.51) -- cycle ;
\draw  [fill={rgb, 255:red, 0; green, 0; blue, 0 }  ,fill opacity=1 ] (397.5,25) -- (389.5,29.5) -- (389.5,20.5) -- cycle ;
\draw  [fill={rgb, 255:red, 0; green, 0; blue, 0 }  ,fill opacity=1 ] (353,67.53) -- (348.5,59.53) -- (357.5,59.53) -- cycle ;
\draw  [fill={rgb, 255:red, 0; green, 0; blue, 0 }  ,fill opacity=1 ] (515,67.53) -- (510.5,59.53) -- (519.5,59.53) -- cycle ;
\draw  [fill={rgb, 255:red, 0; green, 0; blue, 0 }  ,fill opacity=1 ] (434,146.53) -- (429.5,138.53) -- (438.5,138.53) -- cycle ;
\draw (92,6) node [anchor=north west][inner sep=0.75pt]   [align=left] {$v_2$};
\draw (100,107) node [anchor=north west][inner sep=0.75pt]   [align=left] {$v_5$};
\draw (-4,94) node [anchor=north west][inner sep=0.75pt]   [align=left] {$v_4$};
\draw (182,94) node [anchor=north west][inner sep=0.75pt]   [align=left] {$v_6$};
\draw (92,193) node [anchor=north west][inner sep=0.75pt]   [align=left] {$v_7$};
\draw (-4,6) node [anchor=north west][inner sep=0.75pt]   [align=left] {$v_1$};
\draw (182,6) node [anchor=north west][inner sep=0.75pt]   [align=left] {$v_3$};
\draw (426,6) node [anchor=north west][inner sep=0.75pt]   [align=left] {$v_2$};
\draw (434,107) node [anchor=north west][inner sep=0.75pt]   [align=left] {$v_5$};
\draw (330,94) node [anchor=north west][inner sep=0.75pt]   [align=left] {$v_4$};
\draw (516,94) node [anchor=north west][inner sep=0.75pt]   [align=left] {$v_6$};
\draw (426,193) node [anchor=north west][inner sep=0.75pt]   [align=left] {$v_7$};
\draw (330,6) node [anchor=north west][inner sep=0.75pt]   [align=left] {$v_1$};
\draw (516,6) node [anchor=north west][inner sep=0.75pt]   [align=left] {$v_3$};
\draw (51,57) node [anchor=north west][inner sep=0.75pt]   [align=left] {$f_1$};
\draw (362,72) node [anchor=north west][inner sep=0.75pt]   [align=left] {$f_{1,2}$};
\draw (395,35) node [anchor=north west][inner sep=0.75pt]   [align=left] {$f_{1,1}$};
\draw (131,57) node [anchor=north west][inner sep=0.75pt]   [align=left] {$f_2$};
\draw (442,72) node [anchor=north west][inner sep=0.75pt]   [align=left] {$f_{2,2}$};
\draw (475,35) node [anchor=north west][inner sep=0.75pt]   [align=left] {$f_{2,1}$};
\draw (68,130) node [anchor=north west][inner sep=0.75pt]   [align=left] {$f_3$};
\draw (402,130) node [anchor=north west][inner sep=0.75pt]   [align=left] {$f_3$};
\draw (111,130) node [anchor=north west][inner sep=0.75pt]   [align=left] {$f_4$};
\draw (445,130) node [anchor=north west][inner sep=0.75pt]   [align=left] {$f_4$};
\end{tikzpicture}
   \caption{On the left, the lattice \(\mathcal{L}\) where some faces are not triangular. On the right, the triangular lattice \(\mathcal{L}'\) used to calculate \(A_{v_5}^g\), with ghost edges shown as dashed lines.}

\label{phase3}
\end{figure}

\end{example}

\begin{proposition}
Let \(f_1, f_2 \in F(\mathcal{L})\), \(v_1, v_2 \in V(\mathcal{L})\), and \(g_1, g_2 \in G\). Then:
\begin{enumerate}
    \item \(A_{v_1}^{g_1} A_{v_1}^{g_2} = A_{v_1}^{g_1 g_2}\).
    \item Commutativity:
    \begin{align*}
    A_{v_1} A_{v_2} = A_{v_2} A_{v_1}, \quad B_{f_1} B_{f_2} = B_{f_2} B_{f_1}, \quad A_{v_1} B_{f_1} = B_{f_1} A_{v_1}.
    \end{align*}
    \item Idempotence:
    \[
    A_{v_1}^2 = A_{v_1}, \quad B_{f_1}^2 = B_{f_1}.
    \]
\end{enumerate}
\end{proposition}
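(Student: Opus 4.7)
The plan is to exploit the structure $A_v = T^* A_v' T$ to transfer the three statements from the triangular lattice $\mathcal{L}'$, where the operators have the explicit monomial form ${A'}_v^g = D_v^{g,\alpha} P_v^g$, down to the general lattice $\mathcal{L}$. The face-operator claims $B_{f_1} B_{f_2} = B_{f_2} B_{f_1}$ and $B_{f_1}^2 = B_{f_1}$ are immediate, since each $B_f$ is a diagonal projector in the $G$-coloring basis, so the substantive content is concentrated in the vertex operators.

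First I would verify (1) at the level of ${A'}_v^g$ on $\mathcal{L}'$. The permutation part satisfies $P_v^{g_1} P_v^{g_2} = P_v^{g_1 g_2}$ because the left- and right-multiplication rules at $v$ are group actions on edge labels. For the phase, applying ${A'}_v^{g_1} {A'}_v^{g_2}$ to $\ket{\varphi}$ produces the factor $\alpha_v^{g_1}(P_v^{g_2}\varphi)\,\alpha_v^{g_2}(\varphi)$, and the 3-cocycle identity, applied once per face in $\nabla_v$, collapses the two $\zeta(f)$-factors per face into the single $\zeta(f)$-factor of $\alpha_v^{g_1 g_2}(\varphi)$ with edge label $g_1 g_2$. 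To transfer this to $\mathcal{L}$, it is enough that $\operatorname{Im}(T)$ is invariant under each ${A'}_v^g$: the ghost edges were defined as holonomies, and uniformly multiplying every edge at $v$ by $g^{\pm 1}$ transforms each holonomy through $v$ in exactly the same way as it transforms the corresponding ghost edge. Consequently $T T^*$ acts as the identity on each $\operatorname{Im}({A'}_v^g T)$, and
\[
A_v^{g_1} A_v^{g_2} = T^* {A'}_v^{g_1} T T^* {A'}_v^{g_2} T = T^* {A'}_v^{g_1} {A'}_v^{g_2} T = T^* {A'}_v^{g_1 g_2} T = A_v^{g_1 g_2}.
\]

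Idempotence of $A_v$ then follows from (1) by the standard averaging $A_v^2 = |G|^{-2}\sum_{g,h} A_v^{gh} = |G|^{-1}\sum_k A_v^k = A_v$, where I also use $A_v^e = \operatorname{id}$, coming from the normalization of $\alpha$. For $A_{v_1} A_{v_2} = A_{v_2} A_{v_1}$, I compare $A_{v_1}^g A_{v_2}^h$ with $A_{v_2}^h A_{v_1}^g$: the permutation parts commute because actions at distinct vertices affect edge labels by independent left or right multiplications, and the phases coming from faces that contain both $v_1$ and $v_2$ match after another use of the cocycle identity. For $A_v B_f = B_f A_v$, it suffices that ${A'}_v^g$ preserves the flatness locus $\varphi_{\partial f'} = e$ of every face $f'$ of $\mathcal{L}'$: the two edges of $\partial f'$ incident to $v$ contribute a cancelling $g\cdot g^{-1}$ (in the appropriate orientation) inside the holonomy around $\partial f'$, so $B_{f'}$ is invariant under ${A'}_v^g$, and the identity on $\mathcal{L}$ follows by conjugating through $T$. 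The main obstacle I expect is the combinatorial bookkeeping inside the cocycle computation: verifying face by face that the signs $\epsilon(f)$, the positions of $v$ within each face's vertex ordering, and the primed-vertex conventions defining $\alpha_v^g$ align so that condition (1) of $\alpha$ applies uniformly and produces exactly $\alpha_v^{g_1 g_2}(\varphi)$.
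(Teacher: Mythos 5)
Your proposal is correct and follows essentially the same route as the paper: the face-operator claims are dismissed as statements about diagonal projectors, and the vertex-operator claims are transferred from the triangular lattice via $A_v^g = T^* {A'}_v^g T$, using the invariance of $\operatorname{Im}(T)$ under ${A'}_v^g$ so that $TT^*$ can be absorbed. The only difference is that you additionally sketch the underlying triangular-lattice identities (the cocycle computation), which the paper simply cites from Appendix B of the original twisted-model reference.
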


\begin{proof}
The equalities involving face operators are straightforward since the holonomy operators \(B_f\) are projectors and act nontrivially only on the colorings where the holonomy around the face \(f\) is trivial.

For the equations involving vertex operators, we consider \(\mathcal{L}'\) as the canonical triangular lattice of \(\mathcal{L}\). Using the fact that \(A_v^g = T^* \circ {A'}_v^{g} \circ T\), along with the properties of \(A_v^g\) in the triangular lattice (as detailed in \cite[Appendix B]{twisted}), we can derive the necessary proofs. For instance, for the first item:
\begin{align*}
A_{v_1}^{g_1} A_{v_1}^{g_2} & = T^* \circ {A'}_{v_1}^{g_1} \circ T \circ T^* \circ {A'}_{v_1}^{g_2} \circ T \\
& = T^* \circ {A'}_{v_1}^{g_1} {A'}_{v_1}^{g_2} \circ T \\
& = T^* \circ {A'}_{v_1}^{g_1 g_2} \circ T \\
& = A_{v_1}^{g_1 g_2}.
\end{align*}
The commutativity and idempotence properties follow similarly by using the corresponding properties of the operators in the triangular lattice and the intertwining property of \(T\) and \(T^*\).
\end{proof}

\begin{proposition}
Let \(\Sigma\) be a closed oriented surface. The dimension of the ground state space is a topological invariant; that is, it does not depend on the choice of lattice \(\mathcal{L}\).
\end{proposition}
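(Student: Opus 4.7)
The plan is to reduce the claim to the case of triangular lattices, where the analogous invariance was established in \cite{twisted} via Pachner move invariance. I would proceed in two steps.

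First, I would prove that for any lattice $\mathcal{L}$ on $\Sigma$, the ground state space of the model on $\mathcal{L}$ has the same dimension as that on its canonical triangulation $\mathcal{L}'$ constructed in Section~\ref{sec:general lattice}. The isometry $T : \mathcal{H}_{\text{tot}}(\mathcal{L}) \to \mathcal{H}_{\text{tot}}(\mathcal{L}')$ is the starting point: by the very definition of the vertex operators on $\mathcal{L}$ as $A_v = T^* A'_v T$, and by the observation that the face operators on any triangle of $\mathcal{L}'$ involving at least one ghost edge are automatically satisfied on the image of $T$ (since each ghost edge is colored by the product of the non-ghost edge labels along the corresponding boundary path, yielding trivial holonomy), the subspace $\operatorname{Im}(T) \subset \mathcal{H}_{\text{tot}}(\mathcal{L}')$ is invariant under the Hamiltonian $H'$ on $\mathcal{L}'$, and $T$ intertwines $H$ on $\mathcal{L}$ with $H'$ restricted to $\operatorname{Im}(T)$. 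I would then show that every ground state of $H'$ automatically lies in $\operatorname{Im}(T)$, which yields a bijection between the two ground state spaces.

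Second, for two different triangulations of $\Sigma$ I would invoke Pachner's theorem, which guarantees that any two PL triangulations of a closed surface are connected by a finite sequence of 2D Pachner moves (the 2-2 flip and the 1-3 bisection). Invariance of the ground state dimension of the twisted quantum double model under each of these moves was established in \cite{twisted}. Composing the identifications from Step 1 for both lattices with the Pachner chain for the associated triangulations then gives the desired equality of ground state dimensions for any two choices of $\mathcal{L}$.

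The main obstacle will be the first step: carefully verifying both that the face operators on ghost triangles act as the identity on $\operatorname{Im}(T)$, and conversely, that every ground state of $H'$ on $\mathcal{L}'$ necessarily lies in $\operatorname{Im}(T)$. This requires a combinatorial analysis of how each non-triangular face of $\mathcal{L}$ is fanned into triangles by the ghost edges emanating from its smallest vertex, showing that the simultaneous kernel of the associated ghost-triangle face operators coincides exactly with the set of $\mathcal{L}'$-colorings obtained by extending an $\mathcal{L}$-coloring via the ghost-edge rule, and that this identification is compatible with the vertex projectors. Once this identification is in place, Pachner invariance supplies the remainder of the argument.
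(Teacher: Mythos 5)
Your proposal is correct and follows essentially the same route as the paper: reduce to the canonical triangulation $\mathcal{L}'$ via the isometry $T$ (using that the flat colorings of $\mathcal{L}'$ are exactly the $\iota$-extensions of flat colorings of $\mathcal{L}$, so that every ground state of $H'$ lies in the image of $T$), and then invoke the triangulation-independence established in \cite{twisted}. The only cosmetic difference is that the paper phrases the first step through the identity $(T\circ T^*)|_{\mathcal{H}'^0}=\Id$ on the span of flat colorings rather than through invariance of $\operatorname{Im}(T)$, and it cites \cite[Section 3]{twisted} without spelling out the Pachner moves.
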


\begin{proof}
We will prove that for any arbitrary lattice \(\mathcal{L}\), the dimension of the ground state space is equal to that of the ground state space associated with its canonical triangular lattice \(\mathcal{L}'\). This, together with \cite[Section 3]{twisted}, will conclude the proof.

Consider the sets:
\begin{align*}
\operatorname{Col}(\mathcal{L})^{0} & = \{\varphi \in \operatorname{Col}(\mathcal{L}) \mid B_f \ket{\varphi} = \ket{\varphi}, \ \forall f \in F(\mathcal{L}) \},\\
\operatorname{Col}(\mathcal{L}')^{0} & = \{\varphi \in \operatorname{Col}(\mathcal{L}') \mid B_f \ket{\varphi} = \ket{\varphi}, \ \forall f \in F(\mathcal{L}') \}.
\end{align*}

Note that the restriction of \(\pi\) and \(\iota\) gives bijections between \(\operatorname{Col}(\mathcal{L}')^{0}\) and \(\operatorname{Col}(\mathcal{L})^{0}\), since \(\pi \circ \iota = \operatorname{id}_{\operatorname{Col}(\mathcal{L})}\) and \(\iota \circ \pi\) is the identity on \(\operatorname{Col}(\mathcal{L}')^{0}\).

Now, take a vector \(\ket{\psi} \in V_{\text{gs}}(\mathcal{L})\) in the ground state space of \(\mathcal{H}_{\text{tot}}(\cL)\). Then
\[
\ket{\psi} = \sum_{\varphi \in \operatorname{Col}(\mathcal{L})^{0}} x_\varphi \ket{\varphi},
\]
with \(x_\varphi \in \mathbb{C}\). Consider the state \(\ket{\psi'} := T \ket{\psi}\). Since \(T\) maps \(\operatorname{Col}(\mathcal{L})^{0}\) to \(\operatorname{Col}(\mathcal{L}')^{0}\), it follows that \(B_f \ket{\psi'} = \ket{\psi'}\) for all \(f \in F(\mathcal{L}')\). Furthermore, \({A'}_v^g\) maps \(\operatorname{Col}(\mathcal{L}')^{0}\) to itself. Therefore, $A_{v}^{g}\ket{\psi'} \in \operatorname{Span} \{\Co(\cL')^{0}\} = \mathcal{H'}^0$. Thus,
\begin{align*}
    {A'}_{v}^{g}\ket{\psi'} & = T \circ T^*  \circ {A'}_{v}^{g}\ket{\psi'} \quad \quad \quad \quad \quad  \quad \text{(since } (T \circ T^* )|_{\mathcal{H'}^0} = \Id_{\mathcal{H'}^0})\\
    & = T \circ T^* \circ {A'}_{v}^{g} \circ T\ket{\psi}\\
    & =  T \circ A_{v}^{g} \ket{\psi} \\
    & = T\ket{\psi} \\
    & = \ket{\psi'}.
\end{align*}
Therefore, $\ket{\psi'} = T(\ket{\psi}) \in V_{gs}(\cL')$. Similarly, it can be shown that for any $\ket{\psi'} \in V_{gs}(\cL')$, the state $\ket{\psi} := T^*  (\ket{\psi'}) \in V_{gs}(\cL)$. In conclusion, the maps $T$ and $T^* $ induce a bijection between $V_{gs}(\cL)$ and $V_{gs}(\cL')$, proving that the dimension of the ground state space is independent of the choice of lattice \(\mathcal{L}\).

\end{proof}

\section{Ground states of the twisted model}\label{GS} 

In this section, we present an algorithm to construct a basis for the ground state space \(V_{gs}\), leveraging the fact that the operators \(A_v^g\) are monomial operators.

The constraint $B_f|\varphi \rangle=|\varphi \rangle$ is equivalent to the condition that $\varphi_{\partial f}= e$, where $\partial f$ is oriented counterclockwise, thought of as a cycle. Hence, the subspace fixed by all the \(B_f\) operators is spanned by the following set:
\begin{align}
\Co(\cL)^0 &= \left\{ \varphi \in \Co(\cL) \mid \varphi_{\partial f} = e, \ \forall f \in F \right\} \label{eq:col0_def1} \\
&= \left\{ \varphi \in \Co(\cL) \mid \varphi_\cC = e, \ \text{for any contractible cycle } \cC \right\}. \label{eq:col0_def2}
\end{align}

To describe a basis for \(V_{gs}\), we analyze how the vertex operators act on the set \(\Co(\cL)^0\). For this purpose, we recall the following definition.

\begin{definition}\cite{karpilovsky1985projective}
     A monomial space is a triple $\mathbf{W}=\left(W, X,\left(W_x\right)_{x \in X}\right)$ where,
     \begin{itemize}
         \item[$\imath$)] $\mathrm{W}$ is a finite dimensional complex vector space.
         \item[$\imath \imath$)] $\mathrm{X}$ is a finite set.
         \item[$\imath \imath \imath$)] $\left(W_x\right)_{x \in X}$ is a family of one dimensional subspaces of $W$, indexed by $X$ such that, $W=\bigoplus_{x \in X} W_x$.
     \end{itemize}
\end{definition}

Let \(H\) be a group. A \emph{monomial representation} of \(H\) on \(\mathbf{W}\) is a group homomorphism \(\Gamma: H \to \mathrm{GL}(W)\), such that for every \(h \in H\), \(\Gamma(h)\) permutes the subspaces \(W_x\); thus, \(\Gamma\) induces an action by permutation of \(H\) on \(X\). The subspace of \(H\)-invariant vectors is denoted by \(W^H = \{ w \in W \mid \Gamma(h) w = w, \ \forall h \in H \}\).

For each \(x \in X\), let \(\operatorname{Stab}_H(x)\) denote the stabilizer of \(x\) in \(H\), and let \(\mathcal{O}_H(x)\) denote the \(H\)-orbit of \(x\). We say that \(x \in X\) is \emph{regular} under the monomial action of \(H\) if \(\Gamma(h)\) acts as the identity on \(W_x\) for all \(h \in \operatorname{Stab}_H(x)\).

\begin{proposition}\label{prop:monomial_representation}
Let \(\mathbf{W} = (W, X, (W_x)_{x \in X})\) be a monomial representation of \(H\). Then:
\begin{enumerate}
    \item An element \(x \in X\) is regular if and only if \(\displaystyle \frac{1}{|H|} \sum_{h \in H} \Gamma(h) w_x \ne 0\), where \(w_x\) is a nonzero vector in \(W_x\).
    \item If \(x \in X\) is regular, then every element in its orbit \(\mathcal{O}_H(x)\) is regular.
    \item Let \(Y\) be a set of representatives for the regular \(H\)-orbits in \(X\). For each \(y \in Y\), choose a nonzero vector \(w_y \in W_y\), and let \(T_y\) be a left transversal for \(\operatorname{Stab}_H(y)\) in \(H\). Then the vectors \(u_y = \sum_{h \in T_y} \Gamma(h) w_y\) (for \(y \in Y\)) form a basis for \(W^H\).
    \item The dimension of \(W^H\) equals the number of regular \(H\)-orbits in \(X\).
\end{enumerate}
\end{proposition}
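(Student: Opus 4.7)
The plan is to exploit the fact that although the one-dimensional subspaces $W_x$ are not individually $H$-stable, they are grouped into $H$-stable blocks by the orbits of the induced permutation action on $X$. Writing $W_\cO := \bigoplus_{x \in \cO} W_x$ for each $H$-orbit $\cO \subseteq X$, one has $W = \bigoplus_\cO W_\cO$ as $H$-representations, and hence $W^H = \bigoplus_\cO W_\cO^H$. This reduces every item to a statement about a single orbit. The central technical observation is that since $W_x$ is one-dimensional and $\operatorname{Stab}_H(x)$ preserves it setwise, the assignment $s \mapsto \Gamma(s)|_{W_x}$ defines a character $\chi_x : \operatorname{Stab}_H(x) \to U(1)$, and $x$ is regular precisely when $\chi_x$ is trivial.

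For item (2), if $y = h_0 x$ then $\operatorname{Stab}_H(y) = h_0 \operatorname{Stab}_H(x) h_0^{-1}$ and $W_y = \Gamma(h_0) W_x$; writing any $s' \in \operatorname{Stab}_H(y)$ as $h_0 s h_0^{-1}$ shows that $\Gamma(s')|_{W_y}$ is conjugate to $\Gamma(s)|_{W_x}$, so $\chi_y$ is trivial iff $\chi_x$ is. For item (1), choose a left transversal $T$ for $\operatorname{Stab}_H(x)$ in $H$ and regroup
\[
\frac{1}{|H|}\sum_{h \in H} \Gamma(h) w_x \;=\; \frac{1}{|H|} \sum_{t \in T} \Gamma(t)\Bigl(\sum_{s \in \operatorname{Stab}_H(x)} \chi_x(s)\Bigr) w_x .
\]
The inner sum vanishes unless $\chi_x$ is trivial, in which case it equals $|\operatorname{Stab}_H(x)|$. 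Since the $\Gamma(t) w_x$ lie in the distinct summands $W_{tx}$ and are therefore linearly independent, the full average is nonzero iff the inner sum is, proving (1).

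Items (3) and (4) then follow by assembling these facts on each orbit. Within $W_\cO$, I would recognize the representation as the induced representation $\operatorname{Ind}_{\operatorname{Stab}_H(y)}^H W_y$ for a chosen representative $y \in \cO$; by Frobenius reciprocity $\dim W_\cO^H = \dim W_y^{\operatorname{Stab}_H(y)}$, which is $1$ if $y$ is regular and $0$ otherwise. Direct verification shows the candidate vector $u_y = \sum_{t \in T_y} \Gamma(t) w_y$ is genuinely $H$-invariant: for $h \in H$ and $t \in T_y$, write $ht = t' s$ with $t' \in T_y$ and $s \in \operatorname{Stab}_H(y)$; then $\Gamma(h)\Gamma(t) w_y = \chi_y(s)\Gamma(t') w_y = \Gamma(t') w_y$ by regularity, so $\Gamma(h)$ permutes the summands of $u_y$. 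Linear independence across orbits is automatic from $W^H = \bigoplus_\cO W_\cO^H$, yielding both the basis in (3) and the dimension formula in (4). The main subtlety I anticipate is the upper bound $\dim W_\cO^H \leq 1$ for regular orbits — merely exhibiting $u_y$ is not enough — and the cleanest way to close that gap is the Frobenius reciprocity identification above; alternatively one can argue directly by expanding an invariant vector in the orbit basis and showing the coefficients are forced to be proportional along the transversal.
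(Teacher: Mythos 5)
Your proof is correct and complete. Note that the paper itself does not prove this proposition at all --- it simply cites Karpilovsky's \emph{Projective Representations of Finite Groups} (Chapter 7, Lemma 9.1) --- so there is no in-paper argument to compare against; what you have written is a valid self-contained substitute. The two load-bearing points are exactly the ones you identify: (i) the restriction $s \mapsto \Gamma(s)|_{W_x}$ is a character $\chi_x$ of $\operatorname{Stab}_H(x)$ because $W_x$ is one-dimensional and stabilized setwise, with regularity of $x$ equivalent to triviality of $\chi_x$; and (ii) the coset decomposition $\sum_{h\in H}\Gamma(h)w_x = \sum_{t\in T}\Gamma(t)\bigl(\sum_{s}\chi_x(s)\bigr)w_x$, where the inner sum vanishes for nontrivial $\chi_x$ by the standard character-sum identity and the outer summands $\Gamma(t)w_x \in W_{tx}$ are linearly independent because $t \mapsto tx$ is injective on the transversal. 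Your conjugation computation for item (2) is right ($\chi_{h_0 x}(h_0 s h_0^{-1}) = \chi_x(s)$), and you correctly flag the one real subtlety in items (3)--(4), namely the upper bound $\dim W_{\mathcal{O}}^H \le 1$ on a regular orbit and $=0$ on a non-regular one; the identification $W_{\mathcal{O}} \cong \operatorname{Ind}_{\operatorname{Stab}_H(y)}^H \chi_y$ plus Frobenius reciprocity closes this cleanly. (An even more elementary alternative, if you want to avoid induced representations: the averaging projector $P = \tfrac{1}{|H|}\sum_h \Gamma(h)$ maps $W_{\mathcal{O}}$ onto $W_{\mathcal{O}}^H$, and since $P\Gamma(t) = P$, the image is spanned by the single vector $Pw_y$, which is nonzero precisely when $y$ is regular by your item (1).) No gaps.
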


\begin{proof}
See \cite[Chapter 7, Lemma 9.1]{karpilovsky1985projective}.
\end{proof}

We now consider the Hilbert space \(\cH^0\) spanned by \(\Co(\cL)^0\), and set \(W = \cH^0\), \(X = \Co(\cL)^0\), and \(W_\varphi = \mathbb{C} \ket{\varphi}\) for each \(\varphi \in \Co(\cL)^0\). This forms a monomial space.

Let \(H = G^{|V|}\) be the group of functions from \(V\) to \(G\), and define the group homomorphism:
\[
\Gamma: H \to \mathrm{GL}(\cH^0), \quad (g_v)_{v \in V} \mapsto \prod_{v \in V} A_v^{g_v},
\]
where \(A_v^{g_v}\) acts on \(\cH^0\) as defined in Section~\ref{TM}. Then \(\cH^0\), together with the action \(\Gamma\), is a monomial representation of \(H\). The \(H\)-invariant subspace \(W^H\) is precisely the ground state space \(V_{gs}\).

To construct a basis for the ground state space, Proposition~\ref{prop:monomial_representation} indicates that we need to identify the regular elements of \(X\) under the action of \(H\). Before addressing the general case, we consider the torus as a specific example.

\subsection{Example: The Torus}

Let \(G\) be a finite group. Consider the lattice in Figure~\ref{toro}, which consists of two faces and a single vertex (star). This is the smallest $\Delta$-complex of a torus. The edges are labeled, and the orientations are determined by the ordering of the vertices.

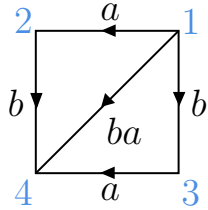
\begin{figure}[h!]
    \centering    

\tikzset{every picture/.style={line width=0.75pt}} 

\begin{tikzpicture}[x=0.45pt,y=0.45pt,yscale=-1,xscale=1]
 
\draw   (263,65.09) -- (383.03,65.09) -- (383.03,184.84) -- (263,184.84) -- cycle ;
\draw    (263,184.84) -- (383.03,65.09) ;
\draw  [color={rgb, 255:red, 0; green, 0; blue, 0 }  ,draw opacity=1 ][fill={rgb, 255:red, 0; green, 0; blue, 0 }  ,fill opacity=1 ][line width=0.75]  (383.5,126.58) -- (387.5,117.33) -- (379.5,117.33) -- cycle ;
\draw  [color={rgb, 255:red, 0; green, 0; blue, 0 }  ,draw opacity=1 ][fill={rgb, 255:red, 0; green, 0; blue, 0 }  ,fill opacity=1 ][line width=0.75]  (320.03,64.97) -- (329.29,68.94) -- (329.29,60.94) -- cycle ;
\draw  [color={rgb, 255:red, 0; green, 0; blue, 0 }  ,draw opacity=1 ][fill={rgb, 255:red, 0; green, 0; blue, 0 }  ,fill opacity=1 ][line width=0.75]  (263.5,128.93) -- (267.5,118.18) -- (259.5,118.18) -- cycle ;
\draw  [color={rgb, 255:red, 0; green, 0; blue, 0 }  ,draw opacity=1 ][fill={rgb, 255:red, 0; green, 0; blue, 0 }  ,fill opacity=1 ][line width=0.75]  (319.23,184.57) -- (328.49,188.54) -- (328.49,180.54) -- cycle ;
\draw  [fill={rgb, 255:red, 0; green, 0; blue, 0 }  ,fill opacity=1 ] (319.17,127.83) -- (321.65,118.99) -- (328.01,125.35) -- cycle ;

\draw (315,191.4) node [anchor=north west][inner sep=0.75pt]   [align=left] {{\large $a$}};
\draw (315,40) node [anchor=north west][inner sep=0.75pt]   [align=left] {{\large $a$}};
\draw (390.7,112.4) node [anchor=north west][inner sep=0.75pt]   [align=left] {{\large $b$}};
\draw (236.9,112.4) node [anchor=north west][inner sep=0.75pt]   [align=left] {{\large $b$}};
\draw (320,135.3) node [anchor=north west][inner sep=0.75pt]   [align=left] {{\large $ba$}};
\draw (242.3,188.6) node [anchor=north west][inner sep=0.75pt]  [font=\large] [align=left] {\textcolor[rgb]{0.29,0.56,0.89}{$4$}};
\draw (382.9,188.6) node [anchor=north west][inner sep=0.75pt]  [font=\large] [align=left] {\textcolor[rgb]{0.29,0.56,0.89}{$3$}};
\draw (242.3,43.6) node [anchor=north west][inner sep=0.75pt]  [font=\large] [align=left] {\textcolor[rgb]{0.29,0.56,0.89}{$2$}};
\draw (382.9,44.6) node [anchor=north west][inner sep=0.75pt]  [font=\large] [align=left] {\textcolor[rgb]{0.29,0.56,0.89}{$1$}};
\end{tikzpicture}
    \caption{$\Delta$-complex lattice for the torus.}  
    \label{toro}
\end{figure}

The subspace \(\cH^0\) is spanned by the states:
\[
\{ \ket{a,b} \mid a, b \in G, \ [a,b] = e \},
\]
where \([a,b] = a b a^{-1} b^{-1}\) is the commutator of \(a\) and \(b\). The only vertex in the lattice is associated with the operators \(A^x\) where $x\in G$. To determine whether the state \(\ket{a,b}\) is regular, we need to compute the scalar \(\alpha_{a,b}^x\) such that \(A^x \ket{a,b} = \alpha_{a,b}^x \ket{a,b}\) for each $x\in \operatorname{Stab}_G(x)$, and check whether \(\alpha_{a,b}^x = 1\).

The scalar \(\alpha_{a,b}^x\) is calculated using the 3-cocycle \(\alpha\) and the labels of the edges in the lattice. After performing the computation (for details see \cite[Section IV]{twisted} ), we find:
\begin{align*}
\alpha_{a,b}^x = \frac{\beta_x(a,b)}{\beta_x(b,a)},
\end{align*}
where \(\beta_x\) is defined as follows.

\begin{definition}\label{def:beta}
Let \(G\) be a group and \(\alpha: G^3 \to U(1)\) a normalized 3-cocycle. For each \(x \in G\), define the function \(\beta_x: G^2 \to U(1)\) by
\[
\beta_x(y,z) = \frac{\alpha(x,y,z) \alpha(y,z,z^{-1} y^{-1} x y z)}{\alpha(y,y^{-1} x y,z)}.
\]
\end{definition}
The  functions \(\beta_x\) are normalized  2-cocycles on \(C_G(g)=\{x\in G: gx=xg\}\)  and satisfy:
\[
\beta_x(y,z) \beta_x(yz,w) = \beta_x(y,zw) \beta_{y^{-1} x y}(z,w), \quad \forall y,z,w \in G.
\]

For a set \(\{g_1, \dots, g_n\} \subset G\), we define \(C_G(g_1, \dots, g_n) = \bigcap_{i=1}^n C_G(g_i)\).

The state \(\ket{a,b}\) is regular if and only if \(\alpha_{a,b}^x = 1\) for all \(x \in C_G(a,b)\). Thus, \(\ket{a,b}\) is regular if and only if the following map is trivial:
\[
\rho: C_G(a,b) \to \mathbb{C}^*, \quad x \mapsto \frac{\beta_x(a,b)}{\beta_x(b,a)}.
\]

\subsection{General Case: Surface of Genus \(n\)}

To determine the regular states for a surface of genus \(n\), we consider the simplest $\Delta$-complex of the surface, as illustrated in Figure~\ref{generon}. Similar to the torus case, the ordering of vertices cannot be arbitrary but can be suitably arranged.

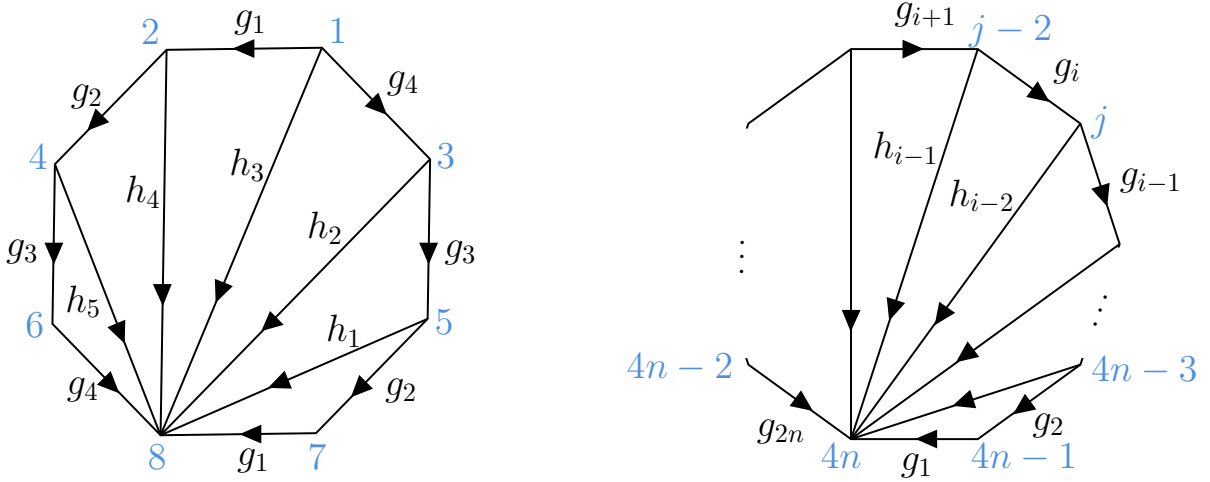
\begin{figure} 
    \centering
\tikzset{every picture/.style={line width=0.75pt}} 

\begin{tikzpicture}[x=0.75pt,y=0.75pt,yscale=-1,xscale=1]

\draw   (122.33,71.15) -- (200.71,70.1) -- (255.25,126.35) -- (254,206.93) -- (197.69,264.65) -- (119.31,265.69) -- (64.77,209.45) -- (66.02,128.87) -- cycle ;
\draw    (200.71,70.1) -- (119.31,265.69) ;
\draw    (255.25,126.35) -- (119.31,265.69) ;
\draw    (254,206.93) -- (119.31,265.69) ;
\draw    (122.33,71.15) -- (119.31,265.69) ;
\draw    (66.02,128.87) -- (119.31,265.69) ;
\draw  [color={rgb, 255:red, 0; green, 0; blue, 0 }  ,draw opacity=1 ][fill={rgb, 255:red, 0; green, 0; blue, 0 }  ,fill opacity=1 ][line width=0.75]  (65.5,178.25) -- (69.5,169) -- (61.5,169) -- cycle ;
\draw  [fill={rgb, 255:red, 0; green, 0; blue, 0 }  ,fill opacity=1 ] (97.61,243.64) -- (88.12,239.98) -- (93.95,234.15) -- cycle ;
\draw  [fill={rgb, 255:red, 0; green, 0; blue, 0 }  ,fill opacity=1 ] (232.61,103.14) -- (223.12,99.48) -- (228.95,93.65) -- cycle ;
\draw  [color={rgb, 255:red, 0; green, 0; blue, 0 }  ,draw opacity=1 ][fill={rgb, 255:red, 0; green, 0; blue, 0 }  ,fill opacity=1 ][line width=0.75]  (254.5,178.25) -- (258.5,169) -- (250.5,169) -- cycle ;
\draw  [color={rgb, 255:red, 0; green, 0; blue, 0 }  ,draw opacity=1 ][fill={rgb, 255:red, 0; green, 0; blue, 0 }  ,fill opacity=1 ][line width=0.75]  (160.36,265.14) -- (169.62,269.11) -- (169.62,261.11) -- cycle ;
\draw  [color={rgb, 255:red, 0; green, 0; blue, 0 }  ,draw opacity=1 ][fill={rgb, 255:red, 0; green, 0; blue, 0 }  ,fill opacity=1 ][line width=0.75]  (156.86,70.53) -- (166.12,74.5) -- (166.12,66.5) -- cycle ;
\draw  [fill={rgb, 255:red, 0; green, 0; blue, 0 }  ,fill opacity=1 ] (83.14,111.14) -- (86.7,101.65) -- (92.53,107.48) -- cycle ;
\draw  [fill={rgb, 255:red, 0; green, 0; blue, 0 }  ,fill opacity=1 ] (216.14,245.64) -- (219.7,236.15) -- (225.53,241.98) -- cycle ;
\draw  [fill={rgb, 255:red, 0; green, 0; blue, 0 }  ,fill opacity=1 ] (174.65,241.32) -- (181.7,233.98) -- (184.82,241.62) -- cycle ;
\draw  [fill={rgb, 255:red, 0; green, 0; blue, 0 }  ,fill opacity=1 ] (171,212.34) -- (174.67,203.86) -- (180.49,209.69) -- cycle ;
\draw  [fill={rgb, 255:red, 0; green, 0; blue, 0 }  ,fill opacity=1 ] (147.55,197.67) -- (147.21,187.5) -- (154.86,190.59) -- cycle ;
\draw  [color={rgb, 255:red, 0; green, 0; blue, 0 }  ,draw opacity=1 ][fill={rgb, 255:red, 0; green, 0; blue, 0 }  ,fill opacity=1 ][line width=0.75]  (120.43,199.25) -- (124.17,189.89) -- (116.18,189.89) -- cycle ;
\draw  [fill={rgb, 255:red, 0; green, 0; blue, 0 }  ,fill opacity=1 ] (100.74,218.32) -- (93.36,211.31) -- (100.98,208.15) -- cycle ;
\draw   (603,169.21) -- (583.23,230.05) -- (531.48,267.64) -- (467.52,267.64) -- (415.77,230.05) -- (396,169.21) -- (415.77,108.38) -- (467.52,70.78) -- (531.48,70.78) -- (583.23,108.38) -- cycle ;
\draw  [color={rgb, 255:red, 255; green, 255; blue, 255 }  ,draw opacity=1 ][fill={rgb, 255:red, 255; green, 255; blue, 255 }  ,fill opacity=1 ] (387.8,111.39) -- (559.82,111.39) -- (559.82,226.23) -- (387.8,226.23) -- cycle ;
\draw    (531.48,70.78) -- (467.52,267.64) ;
\draw    (467.52,70.78) -- (467.52,267.64) ;
\draw    (583.23,108.38) -- (467.52,267.64) ;
\draw    (583.23,230.05) -- (467.52,267.64) ;
\draw  [color={rgb, 255:red, 0; green, 0; blue, 0 }  ,draw opacity=1 ][fill={rgb, 255:red, 0; green, 0; blue, 0 }  ,fill opacity=1 ][line width=0.75]  (503,70.68) -- (494,66.8) -- (494,74.8) -- cycle ;
\draw  [fill={rgb, 255:red, 0; green, 0; blue, 0 }  ,fill opacity=1 ] (566.72,96.76) -- (556.77,94.63) -- (561.62,87.96) -- cycle ;
\draw  [fill={rgb, 255:red, 0; green, 0; blue, 0 }  ,fill opacity=1 ] (550.19,254.46) -- (555.29,245.66) -- (560.14,252.33) -- cycle ;
\draw  [color={rgb, 255:red, 0; green, 0; blue, 0 }  ,draw opacity=1 ][fill={rgb, 255:red, 0; green, 0; blue, 0 }  ,fill opacity=1 ][line width=0.75]  (501.18,267.51) -- (510.46,271.58) -- (510.46,263.58) -- cycle ;
\draw  [fill={rgb, 255:red, 0; green, 0; blue, 0 }  ,fill opacity=1 ] (519.83,250.45) -- (527.38,243.63) -- (529.96,251.47) -- cycle ;
\draw  [fill={rgb, 255:red, 0; green, 0; blue, 0 }  ,fill opacity=1 ] (447.35,252.73) -- (437.4,250.6) -- (442.25,243.93) -- cycle ;
\draw  [color={rgb, 255:red, 0; green, 0; blue, 0 }  ,draw opacity=1 ][fill={rgb, 255:red, 0; green, 0; blue, 0 }  ,fill opacity=1 ][line width=0.75]  (467.39,212.17) -- (471.61,202.92) -- (463.61,202.92) -- cycle ;
\draw  [fill={rgb, 255:red, 0; green, 0; blue, 0 }  ,fill opacity=1 ] (487.15,206.75) -- (486.1,196.64) -- (493.94,199.18) -- cycle ;
\draw  [fill={rgb, 255:red, 0; green, 0; blue, 0 }  ,fill opacity=1 ] (510.42,209.14) -- (512.37,199.16) -- (519.13,203.89) -- cycle ;
\draw  [color={rgb, 255:red, 255; green, 255; blue, 255 }  ,draw opacity=1 ][fill={rgb, 255:red, 255; green, 255; blue, 255 }  ,fill opacity=1 ] (551.82,171.46) -- (616.84,171.46) -- (616.84,226.23) -- (551.82,226.23) -- cycle ;
\draw    (603,169.21) -- (467.52,267.64) ;
\draw  [fill={rgb, 255:red, 0; green, 0; blue, 0 }  ,fill opacity=1 ] (521.43,228.12) -- (526.39,219.24) -- (531.35,225.84) -- cycle ;

\draw  [fill={rgb, 255:red, 0; green, 0; blue, 0 }  ,fill opacity=1 ] (596.35,149.09) -- (589.56,141.52) -- (597.4,138.97) -- cycle ;

\draw (156.5,271) node [anchor=north west][inner sep=0.75pt]   [align=left] {{\large $g_1$}};
\draw (231,235.5) node [anchor=north west][inner sep=0.75pt]   [align=left] {{\large $g_2$}};
\draw (261,166) node [anchor=north west][inner sep=0.75pt]   [align=left] {{\large $g_3$}};
\draw (232,80.5) node [anchor=north west][inner sep=0.75pt]   [align=left] {{\large $g_4$}};
\draw (155,50) node [anchor=north west][inner sep=0.75pt]   [align=left] {{\large $g_1$}};
\draw (72.5,87) node [anchor=north west][inner sep=0.75pt]   [align=left] {{\large $g_2$}};
\draw (40,165.5) node [anchor=north west][inner sep=0.75pt]   [align=left] {{\large $g_3$}};
\draw (70.5,235) node [anchor=north west][inner sep=0.75pt]   [align=left] {{\large $g_4$}};
\draw (201.4,203.5) node [anchor=north west][inner sep=0.75pt]   [align=left] {{\large $h_1$}};
\draw (191.8,151.5) node [anchor=north west][inner sep=0.75pt]   [align=left] {{\large $h_2$}};
\draw (152.6,119.5) node [anchor=north west][inner sep=0.75pt]   [align=left] {{\large $h_3$}};
\draw (100,133.1) node [anchor=north west][inner sep=0.75pt]   [align=left] {{\large $h_4$}};
\draw (69.9,188.45) node [anchor=north west][inner sep=0.75pt]   [align=left] {{\large $h_5$}};
\draw (111.3,268.6) node [anchor=north west][inner sep=0.75pt]  [font=\large] [align=left] {\textcolor[rgb]{0.29,0.56,0.89}{$8$}};
\draw (192.1,268.8) node [anchor=north west][inner sep=0.75pt]  [font=\large] [align=left] {\textcolor[rgb]{0.29,0.56,0.89}{$7$}};
\draw (256.1,200.8) node [anchor=north west][inner sep=0.75pt]  [font=\large] [align=left] {\textcolor[rgb]{0.29,0.56,0.89}{$5$}};
\draw (256.9,116.8) node [anchor=north west][inner sep=0.75pt]  [font=\large] [align=left] {\textcolor[rgb]{0.29,0.56,0.89}{$3$}};
\draw (202.5,53.6) node [anchor=north west][inner sep=0.75pt]  [font=\large] [align=left] {\textcolor[rgb]{0.29,0.56,0.89}{$1$}};
\draw (108.1,54.4) node [anchor=north west][inner sep=0.75pt]  [font=\large] [align=left] {\textcolor[rgb]{0.29,0.56,0.89}{$2$}};
\draw (51.3,117.6) node [anchor=north west][inner sep=0.75pt]  [font=\large] [align=left] {\textcolor[rgb]{0.29,0.56,0.89}{$4$}};
\draw (49.7,202.4) node [anchor=north west][inner sep=0.75pt]  [font=\large] [align=left] {\textcolor[rgb]{0.29,0.56,0.89}{$6$}};
\draw (407.68,186.5) node [anchor=north west][inner sep=0.75pt]  [rotate=-271.26] [align=left] {$\displaystyle \cdots $};
\draw (450.8,268.51) node [anchor=north west][inner sep=0.75pt]   [align=left] {{\large \textcolor[rgb]{0.29,0.56,0.89}{$4n$}}};
\draw (353,222.71) node [anchor=north west][inner sep=0.75pt]   [align=left] {{\large \textcolor[rgb]{0.29,0.56,0.89}{$4n-2$}}};
\draw (526.43,268.25) node [anchor=north west][inner sep=0.75pt]   [align=left] {{\large \textcolor[rgb]{0.29,0.56,0.89}{$4n-1$}}};
\draw (491.13,273.71) node [anchor=north west][inner sep=0.75pt]   [align=left] {{\large $g_1$}};
\draw (557.51,253.07) node [anchor=north west][inner sep=0.75pt]   [align=left] {{\large $g_2$}};
\draw (567.92,74.4) node [anchor=north west][inner sep=0.75pt]   [align=left] {{\large $g_i$}};
\draw (418.59,256.2) node [anchor=north west][inner sep=0.75pt]   [align=left] {{\large $g_{2n}$}};
\draw (585.08,211.2) node [anchor=north west][inner sep=0.75pt]  [rotate=-290.77] [align=left] {$\displaystyle \cdots $};
\draw (587.23,224.25) node [anchor=north west][inner sep=0.75pt]   [align=left] {{\large \textcolor[rgb]{0.29,0.56,0.89}{$4n-3$}}};
\draw (487.93,46) node [anchor=north west][inner sep=0.75pt]   [align=left] {{\large $g_{i+1}$}};
\draw (601.52,129.8) node [anchor=north west][inner sep=0.75pt]   [align=left] {{\large $g_{i-1}$}};
\draw (587.23,97.85) node [anchor=north west][inner sep=0.75pt]   [align=left] {{\large \textcolor[rgb]{0.29,0.56,0.89}{$j$}}};
\draw (525.63,52.05) node [anchor=north west][inner sep=0.75pt]   [align=left] {{\large \textcolor[rgb]{0.29,0.56,0.89}{$j-2$}}};
\draw (515.32,135) node [anchor=north west][inner sep=0.75pt]   [align=left] {{\large $h_{i-2}$}};
\draw (477,111) node [anchor=north west][inner sep=0.75pt]   [align=left] {{\large $h_{i-1}$}};
\end{tikzpicture}
    \caption{$\Delta$-complex for a surface of genus 2, and genus $n$.}
    \label{generon}
\end{figure}

\begin{proposition}
Let \(\Sigma\) be a surface of genus \(n\), and consider the $\Delta$-complex shown in Figure~\ref{generon}. A state \(\ket{g_1, \dots, g_{2n}} \in \cH^0\) is regular if and only if, for every \(x \in C_G(g_1, \dots, g_{2n})\), the following holds:
\[
\prod_{i=1}^{2n} \frac{\beta_x\left( g_i, g_{i+1} \cdots g_{2n} \right)}{\beta_x\left( g_i, g_{i-1} \cdots g_1 \right)} = 1.
\]
\end{proposition}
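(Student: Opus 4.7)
The plan is to combine Proposition~\ref{prop:monomial_representation}(1) with a direct computation of the vertex-operator phase $\alpha_v^x(\varphi)$ on the coloring $\varphi=(g_1,\ldots,g_{2n})$. The $\Delta$-complex in Figure~\ref{generon} has a single vertex $v$ (all corners of the $4n$-gon are identified), so the acting group is simply $H=G$, and regularity amounts to checking that $A_v^x$ fixes $\ket{\varphi}$ for every $x$ in its stabilizer.

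First I would identify the stabilizer. Every edge is a loop at $v$, hence the permutation $P_v^x$ acts on each edge color $g_i$ by left multiplication by $x$ at the outgoing end and right multiplication by $x^{-1}$ at the incoming end, with net effect $g_i \mapsto x g_i x^{-1}$. Therefore $\operatorname{Stab}_G(\varphi) = C_G(g_1,\ldots,g_{2n})$, and by Proposition~\ref{prop:monomial_representation}(1) the state is regular if and only if $\alpha_v^x(\varphi)=1$ for every $x$ in this centralizer.

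Next I would compute $\alpha_v^x(\varphi)$ explicitly. Since the chosen $\Delta$-complex is already triangular, no ghost edges are needed and
\[
\alpha_v^x(\varphi) \;=\; \prod_{f\in\nabla_v}\zeta(f)^{\epsilon(f)},
\]
where $\nabla_v$ runs over all $4n-2$ triangles of the polygon; each factor $\zeta(f)$ is an $\alpha$-value on a triple determined by $\partial f$ together with the auxiliary edge $[v'v]=x$, and $\epsilon(f)=\pm 1$ is the sign of the reordering $(v',v_1,v_2,v_3)\mapsto(u_1<u_2<u_3<u_4)$. The face conditions of $\cH^0$ fix the color of each internal diagonal to a specific partial product of consecutive $g_i$'s.

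The crucial step is to telescope this product into the claimed form. The triangulation is a fan based at the smallest corner, so adjacent triangles share a radial edge. Pairs of adjacent triangles, combined through the 3-cocycle identity~(1) for $\alpha$, rewrite three consecutive $\alpha$-factors as a single $\beta_x$-factor (Definition~\ref{def:beta}). Tracking which partial product of $g_i$'s appears on each side of the boundary edge $g_i$ after the polygon identifications produces, upon cancellation of the internal contributions, exactly the ratio $\beta_x(g_i,\,g_{i+1}\cdots g_{2n})/\beta_x(g_i,\,g_{i-1}\cdots g_1)$ for each $i$, with the convention that empty products equal the identity and $\beta_x(g_i,e)=1$ by normalization. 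The torus case $n=1$, computed in \cite[Section IV]{twisted}, is the base of this telescoping, and the general case follows by induction on the genus: adding a handle introduces four new boundary edges together with a new fan of triangles whose combined phase is precisely the four additional $\beta_x$-ratios.

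The main obstacle is exactly this telescoping: keeping careful track of the signs $\epsilon(f)$, the ordering of $(v',v_1,v_2,v_3)$ in each triangle, and the opposite orientations of the two identified copies of each boundary edge, and then verifying that adjacent triangles sharing a radial edge really collapse — via the 3-cocycle condition — into a $\beta_x$-factor with the correct partial-product arguments. Everything else is routine bookkeeping once this collapse is established.
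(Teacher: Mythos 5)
Your proposal follows essentially the same route as the paper: both exploit that the $\Delta$-complex has a single vertex so that the stabilizer is $C_G(g_1,\dots,g_{2n})$, and both evaluate $\alpha_v^x(\varphi)$ as a product over the triangles of the fan, reorganizing it edge-by-edge so that each identified boundary edge $g_i$ contributes the ratio $\beta_x(g_i,g_{i+1}\cdots g_{2n})/\beta_x(g_i,g_{i-1}\cdots g_1)$ from its two incident triangles of opposite orientation. The cocycle bookkeeping you flag as the main obstacle is likewise left at the level of a sketch in the paper's own proof, so your argument is at the same level of rigor and your proposed genus induction is merely a repackaging of the same per-edge computation.
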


\begin{proof}
Since there is only one vertex in the lattice, the scalar obtained from the action of \(A^x\) on \(\ket{g_1, \dots, g_{2n}}\) can be expressed as the product of contributions from all edges involved. Each edge participates in two adjacent triangles, one oriented counterclockwise and the other clockwise. We analyze the contribution of each edge by considering its participation in these triangles.

For an edge labeled \(g_i\), we focus on the two triangles it is part of. In one triangle, the edge contributes a term involving \(\beta_x(g_i, g_{i-1} \cdots g_1)\) due to its clockwise orientation. In the adjacent triangle, it contributes \(\beta_x(g_i, g_{i+1} \cdots g_{2n})\) due to its counterclockwise orientation. By iterating this process for each edge in the $\Delta$-complex, we obtain the following product over the \(2n\) edges:
\[
\prod_{i=1}^{2n} \frac{\beta_x(g_i, g_{i+1} \cdots g_{2n})}{\beta_x(g_i, g_{i-1} \cdots g_1)}.
\]

This product must equal 1 for the state \(\ket{g_1, \dots, g_{2n}}\) to be regular. Thus, the condition in the proposition is derived.
\end{proof}

Recall that two 2-cocycles \(\alpha_1, \alpha_2 \in Z^2(G, U(1))\) are cohomologous if they differ by a coboundary. A 2-cocycle \(\beta: G^2 \to U(1)\) is called a \emph{coboundary} if there exists a function \(\gamma: G \to U(1)\) such that \(\beta(y, z) = \gamma(y) \gamma(z) \gamma(yz)^{-1}\). The group of cohomology classes is denoted by \(H^2(G, U(1))\).

\begin{proposition}\label{prop:cohomology_zero}
Let \(G\) be a finite group and \(\Sigma\) a surface of genus \(n\). Suppose that \(H^2\left( C_G(g_1, \dots, g_{2n}), U(1) \right) = 0\). Then every state \(\ket{g_1, \dots, g_{2n}} \in \cH^0\) is regular.
\end{proposition}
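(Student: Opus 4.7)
By the preceding proposition, $\ket{g_1,\dots,g_{2n}}$ is regular exactly when the product
\[
\rho(x) := \prod_{i=1}^{2n}\frac{\beta_x(g_i,\, g_{i+1}\cdots g_{2n})}{\beta_x(g_i,\, g_{i-1}\cdots g_1)}
\]
equals $1$ for every $x\in C_G(g_1,\dots,g_{2n})$. The plan is to show that $\rho(x)$ is an invariant of the cohomology class of $\beta_x$, and then to invoke the hypothesis to trivialise that class.

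First I would observe that when $x\in C_G(g_1,\dots,g_{2n})$, each $g_i$ and each of the partial products $P_i := g_{i+1}\cdots g_{2n}$ and $Q_i := g_{i-1}\cdots g_1$ lies in $C_G(x)$; on this centraliser the twisted cocycle identity satisfied by $\beta_x$ collapses to the ordinary $2$-cocycle identity, so the quantity $\rho(x)$ is built out of a bona fide $2$-cocycle. Next, I would verify invariance under coboundaries: replacing $\beta_x$ by $\beta_x\cdot d\gamma$ with $d\gamma(y,z)=\gamma(y)\gamma(z)\gamma(yz)^{-1}$ and using the shift identities $g_iP_i=P_{i-1}$ and $g_iQ_i=Q_{i+1}$, the inserted $\gamma$-factors telescope to
\[
\frac{\gamma(g_{2n}g_{2n-1}\cdots g_1)}{\gamma(g_1g_2\cdots g_{2n})}.
\]
This is precisely where the hypothesis $\ket{g_1,\dots,g_{2n}}\in\cH^0$ enters: the polygon presentation of the genus-$n$ surface used in Figure~\ref{generon} produces a boundary word whose triviality forces $g_1g_2\cdots g_{2n}=g_{2n}\cdots g_2g_1$ in $G$, making the ratio equal to $1$. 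Thus $\rho(x)$ is well-defined on cohomology classes.

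With cohomology-invariance in hand, the hypothesis $H^2(C_G(g_1,\dots,g_{2n}),U(1))=0$ implies that $\beta_x$ is cohomologous to the trivial cocycle, which manifestly gives $\rho(x)=1$. Regularity of $\ket{g_1,\dots,g_{2n}}$ then follows for every admissible $x$. The step I expect to be the main obstacle is reconciling the two centralisers at play: $\beta_x$ naturally lives as a $2$-cocycle on $C_G(x)$, whereas the hypothesis is framed on $C_G(g_1,\dots,g_{2n})$, and these subgroups need not coincide. Making the coboundary argument rigorous requires isolating an ambient subgroup that simultaneously contains all $g_i$ and is controlled by $C_G(g_1,\dots,g_{2n})$; the argument is cleanest in the commuting case $\langle g_1,\dots,g_{2n}\rangle\subseteq C_G(g_1,\dots,g_{2n})$, which already covers the torus (where $\cH^0$ forces $[a,b]=e$) and the abelian-$G$ situation in which the proposition is principally applied.
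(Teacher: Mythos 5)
Your proposal is essentially the paper's own proof: the paper substitutes $\beta_x(y,z)=\gamma_x(y)\gamma_x(z)\gamma_x(yz)^{-1}$ directly into the product, telescopes to $\gamma_x(g_1\cdots g_{2n})^{-1}/\gamma_x(g_{2n}\cdots g_1)^{-1}$, and uses the flatness condition $g_1\cdots g_{2n}=g_{2n}\cdots g_1$ to conclude the ratio is $1$ --- exactly your coboundary-invariance computation, just phrased as a one-step substitution rather than as well-definedness on cohomology classes. The centraliser mismatch you flag at the end is a real subtlety, but it is present in the paper's argument as well (the paper simply writes $\gamma_x:G\to U(1)$ after invoking vanishing of $H^2$ of $C_G(g_1,\dots,g_{2n})$, even though the arguments $g_i$ and the partial products live in $C_G(x)$ rather than in $C_G(g_1,\dots,g_{2n})$), so you are not missing an idea that the paper supplies.
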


\begin{proof}
If \(H^2(C_G(g_1, \dots, g_{2n}), U(1)) = 0\), then every 2-cocycle of \(C_G(g_1, \dots, g_{2n})\) is a coboundary. This means that for each 3-cocycle \(\alpha\) and any \(x \in C_G(g_1, \dots, g_{2n})\), the corresponding 2-cocycle \(\beta_x\) can be expressed as:
\[
\beta_x(y, z) = \gamma_x(y) \gamma_x(z) \gamma_x(yz)^{-1}
\]
for some function \(\gamma_x: G \to U(1)\).

Substituting this into the product condition for regularity, we have:
\begin{align*}
\prod_{i=1}^{2n} \frac{\beta_x(g_i, g_{i+1} \cdots g_{2n})}{\beta_x(g_i, g_{i-1} \cdots g_1)}
&= \prod_{i=1}^{2n} \frac{\gamma_x(g_i) \gamma_x(g_{i+1} \cdots g_{2n}) \gamma_x(g_i g_{i+1} \cdots g_{2n})^{-1}}{\gamma_x(g_i) \gamma_x(g_{i-1} \cdots g_1) \gamma_x(g_i g_{i-1} \cdots g_1)^{-1}} \\
&= \prod_{i=1}^{2n} \frac{\gamma_x(g_{i+1} \cdots g_{2n}) \gamma_x(g_i \cdots g_{2n})^{-1}}{\gamma_x(g_{i-1} \cdots g_1) \gamma_x(g_i \cdots g_1)^{-1}}.
\end{align*}
This telescoping product simplifies to:
\[
\frac{\gamma_x(g_1 \cdots g_{2n})^{-1}}{\gamma_x(g_{2n} \cdots g_1)^{-1}}.
\]
Since \(\ket{g_1, \dots, g_{2n}} \in \Co(\cL)^0\), it follows that:
\[
g_1^{-1} \cdots g_{2n}^{-1} g_1 \cdots g_{2n} = e,
\]
which implies \(g_1 \cdots g_{2n} = g_{2n} \cdots g_1\). Thus:
\[
\frac{\gamma_x(g_1 \cdots g_{2n})^{-1}}{\gamma_x(g_{2n} \cdots g_1)^{-1}} = 1.
\]
Hence, \(A^x \ket{g_1, \dots, g_{2n}} = \ket{g_1, \dots, g_{2n}}\) for all \(x \in C_G(g_1, \dots, g_{2n})\), proving that \(\ket{g_1, \dots, g_{2n}}\) is regular.
\end{proof}

\begin{corollary}\label{cor:regular_states}
If \(H^2(C_G(g_1, \dots, g_{2n}), U(1)) = 0\) for every \(\ket{g_1, \dots, g_{2n}} \in \cH^0\), then:
\begin{enumerate}
    \item Every basis state in \(\cH^0\) is regular.
    \item The dimension of \(V_{gs}\) equals the number of orbits in \(\operatorname{Hom}(\pi_1(\Sigma), G)\) under the action of \(G\).
\end{enumerate}
\end{corollary}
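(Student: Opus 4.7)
The first part follows at once from Proposition~\ref{prop:cohomology_zero}: the standing hypothesis that $H^2(C_G(g_1, \dots, g_{2n}), U(1)) = 0$ for every basis state of $\cH^0$ is exactly what is needed to conclude that every such state is regular under the monomial action of $H = G^{|V|}$.

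For part (2), I would first apply Proposition~\ref{prop:monomial_representation}(4) to the monomial representation $(\cH^0, \Co(\cL)^0)$ of $H$ to get that $\dim V_{gs}$ equals the number of regular $H$-orbits in $\Co(\cL)^0$. By part (1) every orbit is regular, so this collapses to the total number of $H$-orbits on $\Co(\cL)^0$. Since the ground state dimension is a topological invariant (proved earlier in this section), I may compute it using the minimal $\Delta$-complex of $\Sigma$ from Figure~\ref{generon}, which has a single vertex, $2n$ edges, and a single $2$-face.

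The core step is to produce a $G$-equivariant bijection $\Co(\cL)^0 \cong \Hom(\pi_1(\Sigma), G)$. Because the lattice has a single face, the constraint $B_f \ket{\varphi} = \ket{\varphi}$ reduces to the single equation $r(g_1, \dots, g_{2n}) = e$, where $r$ is the boundary word of that face. This word is, up to Tietze moves, a one-relator presentation of $\pi_1(\Sigma)$ on generators $a_1, \dots, a_{2n}$, so the assignment $a_i \mapsto g_i$ gives the desired bijection. Equivariance comes from the observation that, since the unique vertex $v$ is incident to both endpoints of every (loop) edge, $A_v^x$ acts on a basis state by replacing each $g_i$ with $x g_i x^{-1}$ (up to a phase which equals $1$ on $\Co(\cL)^0$ under our hypothesis, by part (1)); this is precisely the simultaneous conjugation action of $G$ on $\Hom(\pi_1(\Sigma), G)$. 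Counting $G$-orbits on $\Hom(\pi_1(\Sigma),G)$ therefore yields $\dim V_{gs}$.

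The main obstacle is purely bookkeeping: one must carefully match the orientation conventions and vertex ordering used to define $A_v^g$ against the boundary word read off Figure~\ref{generon}, in order to verify both that this word gives a genuine presentation of $\pi_1(\Sigma)$ and that the vertex action really collapses to straight conjugation when $|V| = 1$. These checks are concrete but need to be done with care, as suggested by the identity $g_1 \cdots g_{2n} = g_{2n} \cdots g_1$ that arises in the proof of Proposition~\ref{prop:cohomology_zero}.
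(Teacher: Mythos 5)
Your part (1) coincides with the paper's. For part (2) you take a genuinely different route. The paper's proof is a two-line reduction: once every basis state is regular, $\dim V_{gs}$ equals the number of $H$-orbits in $\Co(\cL)^0$, and these orbits depend only on the permutation part of the operators $A_v^g$ and hence not on $\alpha$; one may therefore set $\alpha$ trivial and quote the known count for the untwisted Kitaev model from \cite{shaw2020}. You instead reprove that untwisted statement directly: you pass to the minimal one-vertex cell structure, identify $\Co(\cL)^0$ with $\Hom(\pi_1(\Sigma),G)$ via the boundary relator of the single polygon, and note that on a one-vertex lattice the permutation part of $A_v^x$ (each loop edge being simultaneously outgoing and incoming) is exactly simultaneous conjugation by $x$. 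This is more self-contained --- it does not outsource the final count to an external reference --- at the price of the orientation and relator bookkeeping you correctly flag. Two minor inaccuracies, neither of which damages the argument: Figure~\ref{generon} depicts a triangulated $\Delta$-complex with $4n-2$ faces and $6n-3$ edges, not a single $2$-face (the one-face $4n$-gon is the general lattice of Section~\ref{sec:general lattice} whose canonical triangulation is that figure, and the diagonal colors are determined by flatness, so the parametrization by $(g_1,\dots,g_{2n})$ is the same either way); and regularity only forces the phase of $A_v^x$ to equal $1$ for $x$ in the stabilizer of the coloring, not for all $x\in G$ --- but since only the underlying permutation action enters the orbit count of Proposition~\ref{prop:monomial_representation}, the phases are irrelevant to your conclusion.
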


\begin{proof}
The first part follows directly from Proposition~\ref{prop:cohomology_zero}. Since the result does not depend on the choice of the $3$-cocycle, the ground state dimension remains constant regardless of the specific $3$-cocycle used to define the model. Therefore, we can consider the trivial $3$-cocycle, in which case we recover Kitaev's model, where it was shown in \cite{shaw2020} that the dimension of \(V_{gs}\) is equal to the number of orbits in \(\operatorname{Hom}(\pi_1(\Sigma), G)\) under the action of \(G\).
\end{proof}

There are several groups that satisfy the hypotheses of Corollary~\ref{cor:regular_states}, such as the quaternion group, all dihedral groups \(D_n\) with odd \(n\), and cyclic groups. For the cyclic group \(\mathbb{Z}_m\), it is straightforward to verify that, for a surface of genus \(n\), the dimension of the ground state is \(m^{2n}\). More interesting examples are the following.

\subsection{Example: Dihedral Group \(D_3\)}
    Take $D_n = \{r,s\mid r^n=s^2=0, sr=r^{-1}s\}$, and consider a subgroup $G$ of $D_n$. If $n$ is odd then $H^2(G,U(1))=0$. Taking $D_3$ as an example, we find that all its colors will be regular. However, unlike in the case of a cyclic group, some orbits in $D_3$ will composed by more than a single element. For a surface of genus two, consider the color $\ket{rr^2rr^2}$
     \begin{center}
\begin{tikzpicture}[x=0.4pt,y=0.4pt,yscale=-1,xscale=1]

\draw    (77.4,185.91) -- (188.91,138.8) ;
\draw  [fill={rgb, 255:red, 0; green, 0; blue, 0 }  ,fill opacity=1 ] (112.65,171.12) -- (119.7,163.78) -- (122.82,171.42) -- cycle ;
\draw   (188.91,138.8) -- (142.91,185.45) -- (77.4,185.91) -- (30.75,139.91) -- (30.29,74.4) -- (76.29,27.75) -- (141.8,27.29) -- (188.45,73.29) -- cycle ;
\draw    (77.4,185.91) -- (188.45,73.29) ;
\draw  [fill={rgb, 255:red, 0; green, 0; blue, 0 }  ,fill opacity=1 ] (115.66,147.31) -- (119.33,137.82) -- (125.16,143.66) -- cycle ;
\draw    (77.4,185.91) -- (141.8,27.29) ;
\draw  [fill={rgb, 255:red, 0; green, 0; blue, 0 }  ,fill opacity=1 ] (100.55,128.67) -- (100.21,118.5) -- (107.86,121.59) -- cycle ;
\draw    (77.4,185.91) -- (76.29,27.75) ;
\draw  [fill={rgb, 255:red, 0; green, 0; blue, 0 }  ,fill opacity=1 ] (76.96,132.01) -- (72.83,122.71) -- (81.08,122.71) -- cycle ;
\draw    (77.4,185.91) -- (30.29,74.4) ;
\draw  [fill={rgb, 255:red, 0; green, 0; blue, 0 }  ,fill opacity=1 ] (62.74,151.32) -- (55.36,144.31) -- (62.98,141.15) -- cycle ;
\draw  [fill={rgb, 255:red, 0; green, 0; blue, 0 }  ,fill opacity=1 ] (112.31,185.69) -- (121.61,181.57) -- (121.61,189.82) -- cycle ;
\draw  [fill={rgb, 255:red, 0; green, 0; blue, 0 }  ,fill opacity=1 ] (102.31,27.54) -- (111.61,23.42) -- (111.61,31.67) -- cycle ;
\draw  [fill={rgb, 255:red, 0; green, 0; blue, 0 }  ,fill opacity=1 ] (30.62,118.19) -- (26.5,108.89) -- (34.75,108.89) -- cycle ;
\draw  [fill={rgb, 255:red, 0; green, 0; blue, 0 }  ,fill opacity=1 ] (188.62,118.19) -- (184.5,108.89) -- (192.75,108.89) -- cycle ;
\draw  [fill={rgb, 255:red, 0; green, 0; blue, 0 }  ,fill opacity=1 ] (157,171.16) -- (160.66,161.67) -- (166.49,167.5) -- cycle ;
\draw  [fill={rgb, 255:red, 0; green, 0; blue, 0 }  ,fill opacity=1 ] (48,56.5) -- (51.66,47) -- (57.49,52.84) -- cycle ;
\draw  [fill={rgb, 255:red, 0; green, 0; blue, 0 }  ,fill opacity=1 ] (55.24,164.5) -- (45.75,160.84) -- (51.58,155) -- cycle ;
\draw  [fill={rgb, 255:red, 0; green, 0; blue, 0 }  ,fill opacity=1 ] (170.91,55.83) -- (161.42,52.17) -- (167.25,46.34) -- cycle ;

\draw (111.64,192.76) node [anchor=north west][inner sep=0.75pt]    {$r$};
\draw (195.64,102.4) node [anchor=north west][inner sep=0.75pt]    {$r$};
\draw (104.97,3.13) node [anchor=north west][inner sep=0.75pt]    {$r$};
\draw (3.64,102.4) node [anchor=north west][inner sep=0.75pt]    {$r$};
\draw (168.31,155.73) node [anchor=north west][inner sep=0.75pt]    {$r^{2}$};
\draw (174.54,28.4) node [anchor=north west][inner sep=0.75pt]    {$r^{2}$};
\draw (12.64,155.73) node [anchor=north west][inner sep=0.75pt]    {$r^{2}$};
\draw (12.97,28.4) node [anchor=north west][inner sep=0.75pt]    {$r^{2}$};
\end{tikzpicture}
\end{center}
According to Proposition \ref{prop:monomial_representation},  to determine a basis for the ground state, we first identify the stabilizer of $\ket{rr^2rr^2}$, which is $\{ A^0, A^r,A^{ r^2} \}$. Next, we consider its left coset (which in this case is only one) $\{ A^s, A^{sr},A^{sr^2} \}$. Therefore, a suitable left transversal would be $\{ A^0, A^s \}$, hence the basis element of the ground state corresponding to the orbit of $\ket{rr^2rr^2}$ is $A^0\ket {rr^2rr^2}+ A^s\ket{rr^2rr^2}$, i.e.
\begin{center}
\begin{tikzpicture}[x=0.4pt,y=0.4pt,yscale=-1,xscale=1]

\draw    (77.4,185.91) -- (188.91,138.8) ;
\draw  [fill={rgb, 255:red, 0; green, 0; blue, 0 }  ,fill opacity=1 ] (112.65,171.12) -- (119.7,163.78) -- (122.82,171.42) -- cycle ;
\draw   (188.91,138.8) -- (142.91,185.45) -- (77.4,185.91) -- (30.75,139.91) -- (30.29,74.4) -- (76.29,27.75) -- (141.8,27.29) -- (188.45,73.29) -- cycle ;
\draw    (77.4,185.91) -- (188.45,73.29) ;
\draw  [fill={rgb, 255:red, 0; green, 0; blue, 0 }  ,fill opacity=1 ] (115.66,147.31) -- (119.33,137.82) -- (125.16,143.66) -- cycle ;
\draw    (77.4,185.91) -- (141.8,27.29) ;
\draw  [fill={rgb, 255:red, 0; green, 0; blue, 0 }  ,fill opacity=1 ] (100.55,128.67) -- (100.21,118.5) -- (107.86,121.59) -- cycle ;
\draw    (77.4,185.91) -- (76.29,27.75) ;
\draw  [fill={rgb, 255:red, 0; green, 0; blue, 0 }  ,fill opacity=1 ] (76.96,132.01) -- (72.83,122.71) -- (81.08,122.71) -- cycle ;
\draw    (77.4,185.91) -- (30.29,74.4) ;
\draw  [fill={rgb, 255:red, 0; green, 0; blue, 0 }  ,fill opacity=1 ] (62.74,151.32) -- (55.36,144.31) -- (62.98,141.15) -- cycle ;
\draw  [fill={rgb, 255:red, 0; green, 0; blue, 0 }  ,fill opacity=1 ] (112.31,185.69) -- (121.61,181.57) -- (121.61,189.82) -- cycle ;
\draw  [fill={rgb, 255:red, 0; green, 0; blue, 0 }  ,fill opacity=1 ] (102.31,27.54) -- (111.61,23.42) -- (111.61,31.67) -- cycle ;
\draw  [fill={rgb, 255:red, 0; green, 0; blue, 0 }  ,fill opacity=1 ] (30.62,118.19) -- (26.5,108.89) -- (34.75,108.89) -- cycle ;
\draw  [fill={rgb, 255:red, 0; green, 0; blue, 0 }  ,fill opacity=1 ] (188.62,118.19) -- (184.5,108.89) -- (192.75,108.89) -- cycle ;
\draw  [fill={rgb, 255:red, 0; green, 0; blue, 0 }  ,fill opacity=1 ] (157,171.16) -- (160.66,161.67) -- (166.49,167.5) -- cycle ;
\draw  [fill={rgb, 255:red, 0; green, 0; blue, 0 }  ,fill opacity=1 ] (48,56.5) -- (51.66,47) -- (57.49,52.84) -- cycle ;
\draw  [fill={rgb, 255:red, 0; green, 0; blue, 0 }  ,fill opacity=1 ] (55.24,164.5) -- (45.75,160.84) -- (51.58,155) -- cycle ;
\draw  [fill={rgb, 255:red, 0; green, 0; blue, 0 }  ,fill opacity=1 ] (170.91,55.83) -- (161.42,52.17) -- (167.25,46.34) -- cycle ;
\draw    (446.76,186.18) -- (558.27,139.07) ;
\draw  [fill={rgb, 255:red, 0; green, 0; blue, 0 }  ,fill opacity=1 ] (482.01,171.38) -- (489.06,164.05) -- (492.18,171.69) -- cycle ;
\draw   (558.27,139.07) -- (512.27,185.72) -- (446.76,186.18) -- (400.11,140.18) -- (399.65,74.66) -- (445.65,28.01) -- (511.17,27.56) -- (557.81,73.56) -- cycle ;
\draw    (446.76,186.18) -- (557.81,73.56) ;
\draw  [fill={rgb, 255:red, 0; green, 0; blue, 0 }  ,fill opacity=1 ] (485.03,147.58) -- (488.69,138.09) -- (494.52,143.93) -- cycle ;
\draw    (446.76,186.18) -- (511.17,27.56) ;
\draw  [fill={rgb, 255:red, 0; green, 0; blue, 0 }  ,fill opacity=1 ] (469.91,128.94) -- (469.57,118.77) -- (477.22,121.86) -- cycle ;
\draw    (446.76,186.18) -- (445.65,28.01) ;
\draw  [fill={rgb, 255:red, 0; green, 0; blue, 0 }  ,fill opacity=1 ] (446.32,132.28) -- (442.19,122.98) -- (450.44,122.98) -- cycle ;
\draw    (446.76,186.18) -- (399.65,74.66) ;
\draw  [fill={rgb, 255:red, 0; green, 0; blue, 0 }  ,fill opacity=1 ] (432.1,151.59) -- (424.72,144.58) -- (432.34,141.42) -- cycle ;
\draw  [fill={rgb, 255:red, 0; green, 0; blue, 0 }  ,fill opacity=1 ] (481.67,185.96) -- (490.97,181.83) -- (490.97,190.08) -- cycle ;
\draw  [fill={rgb, 255:red, 0; green, 0; blue, 0 }  ,fill opacity=1 ] (471.67,27.81) -- (480.97,23.68) -- (480.97,31.93) -- cycle ;
\draw  [fill={rgb, 255:red, 0; green, 0; blue, 0 }  ,fill opacity=1 ] (399.98,118.46) -- (395.86,109.16) -- (404.11,109.16) -- cycle ;
\draw  [fill={rgb, 255:red, 0; green, 0; blue, 0 }  ,fill opacity=1 ] (557.98,118.46) -- (553.86,109.16) -- (562.11,109.16) -- cycle ;
\draw  [fill={rgb, 255:red, 0; green, 0; blue, 0 }  ,fill opacity=1 ] (526.36,171.43) -- (530.02,161.94) -- (535.85,167.77) -- cycle ;
\draw  [fill={rgb, 255:red, 0; green, 0; blue, 0 }  ,fill opacity=1 ] (417.36,56.76) -- (421.02,47.27) -- (426.85,53.1) -- cycle ;
\draw  [fill={rgb, 255:red, 0; green, 0; blue, 0 }  ,fill opacity=1 ] (424.6,164.76) -- (415.11,161.1) -- (420.95,155.27) -- cycle ;
\draw  [fill={rgb, 255:red, 0; green, 0; blue, 0 }  ,fill opacity=1 ] (540.27,56.1) -- (530.78,52.44) -- (536.61,46.6) -- cycle ;

\draw (111.64,192.76) node [anchor=north west][inner sep=0.75pt]    {$r$};
\draw (195.64,102.4) node [anchor=north west][inner sep=0.75pt]    {$r$};
\draw (104.97,3.13) node [anchor=north west][inner sep=0.75pt]    {$r$};
\draw (3.64,102.4) node [anchor=north west][inner sep=0.75pt]    {$r$};
\draw (168.31,155.73) node [anchor=north west][inner sep=0.75pt]    {$r^{2}$};
\draw (174.54,28.4) node [anchor=north west][inner sep=0.75pt]    {$r^{2}$};
\draw (12.64,155.73) node [anchor=north west][inner sep=0.75pt]    {$r^{2}$};
\draw (12.97,28.4) node [anchor=north west][inner sep=0.75pt]    {$r^{2}$};
\draw (481,192.76) node [anchor=north west][inner sep=0.75pt]    {$r$};
\draw (565,102.4) node [anchor=north west][inner sep=0.75pt]    {$r$};
\draw (474.33,3.13) node [anchor=north west][inner sep=0.75pt]    {$r$};
\draw (375,102.4) node [anchor=north west][inner sep=0.75pt]    {$r$};
\draw (543.67,155.73) node [anchor=north west][inner sep=0.75pt]    {$r^{2}$};
\draw (543.67,28.4) node [anchor=north west][inner sep=0.75pt]    {$r^{2}$};
\draw (383,155.73) node [anchor=north west][inner sep=0.75pt]    {$r^{2}$};
\draw (383.33,28.4) node [anchor=north west][inner sep=0.75pt]    {$r^{2}$};
\draw (310,101.4) node [anchor=north west][inner sep=0.75pt]    {\LARGE$z_\alpha$};
\draw (257,101.4) node [anchor=north west][inner sep=0.75pt]    {$+$};
\end{tikzpicture}
\end{center}
    Where $z_\alpha \in \C$,  its value will depend on the 3-cocycle used to define the model, but regardless of this, the ground state dimension for $D_3$ and the surface of genus two will always be $116$.

\subsection{Example: Group with Nontrivial Second Cohomology}
   Let us consider an example of a group where the dimension of the ground state depends on the 3-cocycle. Let $G= \Z_4^3$ and $\Sigma $ be the surface of genus one. Firstly, note that if we take the trivial 3-cocycle, as the group is abelian, then the ground state will have dimension $|\Z_4^3|^2=4^6$. However, for the 3-cocycle $\alpha: \Z_4^3 \times \Z_4^3 \times \Z_4^3 \to U(1) $ defined by $$\alpha((a_1,a_2,a_3),(b_1 ,b_2,b_3),(c_1,c_2,c_3)) = \operatorname{exp} \left( \frac{2 \pi i a_1b_2c_3}{4}\right),$$
    We will have colors that are not regular, resulting in a smaller dimension for the ground state. For instance, the state $\ket{(1,2,3)(3,3,2)}$,
    \begin{center}
\begin{tikzpicture}[x=0.45pt,y=0.45pt,yscale=-1,xscale=1]

\draw   (330.49,130.1) -- (407.02,130.1) -- (407.02,210.41) -- (330.49,210.41) -- cycle ;
\draw    (330.49,210.41) -- (407.02,130.1) ;
\draw  [color={rgb, 255:red, 0; green, 0; blue, 0 }  ,draw opacity=1 ][fill={rgb, 255:red, 0; green, 0; blue, 0 }  ,fill opacity=1 ][line width=0.75]  (407.32,171.34) -- (409.87,165.14) -- (404.77,165.14) -- cycle ;
\draw  [color={rgb, 255:red, 0; green, 0; blue, 0 }  ,draw opacity=1 ][fill={rgb, 255:red, 0; green, 0; blue, 0 }  ,fill opacity=1 ][line width=0.75]  (366.87,130.02) -- (372.76,132.68) -- (372.76,127.32) -- cycle ;
\draw  [color={rgb, 255:red, 0; green, 0; blue, 0 }  ,draw opacity=1 ][fill={rgb, 255:red, 0; green, 0; blue, 0 }  ,fill opacity=1 ][line width=0.75]  (330.81,172.41) -- (333.36,166.21) -- (328.26,166.21) -- cycle ;
\draw  [color={rgb, 255:red, 0; green, 0; blue, 0 }  ,draw opacity=1 ][fill={rgb, 255:red, 0; green, 0; blue, 0 }  ,fill opacity=1 ][line width=0.75]  (366.34,210.23) -- (372.25,212.89) -- (372.25,207.52) -- cycle ;
\draw  [fill={rgb, 255:red, 0; green, 0; blue, 0 }  ,fill opacity=1 ] (364.17,175.33) -- (366.65,166.49) -- (373.01,172.85) -- cycle ;

\draw (329.6,215.18) node [anchor=north west][inner sep=0.75pt]   [align=left] {(1,2,3)};
\draw (329.6,97.18) node [anchor=north west][inner sep=0.75pt]   [align=left] {(1,2,3)};
\draw (411.6,152.18) node [anchor=north west][inner sep=0.75pt]   [align=left] {(3,3,2)};
\draw (249.6,152.18) node [anchor=north west][inner sep=0.75pt]   [align=left] {(3,3,2)};
\end{tikzpicture}
    \end{center}
is not regular because $A^{(0,0,1)}\ket{(1,2,3)(3,3,2)} = i \ket{(1,2,3)(3,3 ,2)}$. In fact, for this 3-cocycle, the dimension of the ground state is $400$, which is smaller than $4^6$.

\section{Local topological quantum order}
\label{sec:local quantum smooth}
In this section, we study the axiomatization of Local Topological Order (LTO), originally proposed in \cite{LTO} for lattices where all faces are squares. Here, we adapt this axiomatization to arbitrary lattices.

The LTO axioms are based on regions of the surface. In the original case (square lattices), these regions were rectangular. For arbitrary lattices, the regions will be convex regions, bounded by a cycle $\mathcal{C}$ of the lattice $\mathcal{L}$. 

\begin{definition} \hfill
\begin{enumerate}
    \item Let $\cC$ be a cycle of $\cL$. A \emph{smooth-rough function} on $\cC$ is a function $\mathcal{F} : \{l \mid l \in \cC \} \to \{ \text{smooth, rough} \}$. We say that an edge $l \in \cC$ is \emph{smooth} if $\mathcal{F}(l) = \text{smooth}$, and \emph{rough} if $\mathcal{F}(l) = \text{rough}$.
    
    \item A region $\Lambda$ is a convex set bounded by a cycle $\partial \Lambda$, which is equipped with a smooth-rough function. For simplicity, we will also identify $\Lambda$ with the set of edges it contains. Figure \ref{ejemploregion} illustrates two examples of such regions, each consisting of five edges.
\end{enumerate}   
\end{definition}

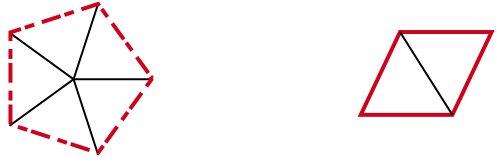
\begin{figure}[h!] 
    \centering
\tikzset{every picture/.style={line width=0.75pt}}
\begin{tikzpicture}[x=0.55pt,y=0.55pt,yscale=-1,xscale=1]

\draw  [color={rgb, 255:red, 208; green, 2; blue, 27 }  ,draw opacity=1 ][line width=1.5]  (362,40) -- (425,40) -- (398,97) -- (335,97) -- cycle ;
\draw    (154.34,20.67) -- (137.5,72.5) ;
\draw    (137.5,72.5) -- (192,72.5) ;
\draw    (137.5,72.5) -- (154.34,124.33) ;
\draw    (137.5,72.5) -- (93.41,104.53) ;
\draw    (93.41,40.47) -- (137.5,72.5) ;
\draw    (362,40) -- (398,97) ;
\draw  [color={rgb, 255:red, 208; green, 2; blue, 27 }  ,draw opacity=1 ][dash pattern={on 3.75pt off 3pt on 7.5pt off 1.5pt}][line width=1.5]  (191.59,71.93) -- (154.34,123.19) -- (94.08,103.61) -- (94.08,40.25) -- (154.34,20.67) -- cycle ;

\end{tikzpicture}

    \caption{On the left, there is a region bounded by a cycle whose edges are represented by dashed lines, indicating that the edges of the cycle are rough and do not belong to the region. On the right, there is a region bounded by a cycle with solid lines for edges, signifying that the edges of the cycle are smooth and are included in the region.}
    \label{ejemploregion}
\end{figure}

\begin{definition} 
Let $\cL$ be a lattice and $\Lambda$ be a region. Let $I \subset \partial \Lambda$ be a path. $I$ is a \emph{smooth interval} if every edge of $I$ is smooth; and $I$ is a \emph{rough interval} if every edge of $I$ is rough.
\end{definition}

\begin{definition}[Nets of algebras]
    Let $\cL$ be a lattice. A \emph{net of algebras} on $\cL$ in the ambient $\mathrm{C}^*$-algebra $\mathfrak{A}$ is an assignment of a $\mathrm{C}^*$-subalgebra $\mathfrak{A}(\Lambda) \subset \mathfrak{A}$ to each region $\Lambda \subset \cL$ such that
    \begin{itemize}
        \item[1)] $\mathfrak{A}(\emptyset)=\mathbb{C} 1_{\mathfrak{A}}$.
        \item[2)] If $\Lambda \subset \Delta$, then $\mathfrak{A}(\Lambda) \subset \mathfrak{A}(\Delta)$.
        \item[3)] If $\Lambda \cap \Delta=\emptyset$, then $[\mathfrak{A}(\Lambda), \mathfrak{A}(\Delta)]=0$.
        \item[4)] $\bigcup_{\Lambda} \mathfrak{A}(\Lambda)$ is norm dense in $\mathfrak{A}$.       
    \end{itemize}
\end{definition}

\begin{definition}[Net of projections]
     A net of projections is an assignment of an orthogonal projector $p_{\Lambda} \in \mathfrak{A}(\Lambda)$ such that $p_{\Delta} p_{\Lambda} p_{\Delta}=p_{\Delta}$ if $\Lambda \subseteq \Delta$.
\end{definition}

\begin{definition}
Let $\Lambda$ and $\Delta$ be regions, and let $I = \partial \Lambda \cap \partial \Delta$. Assume that $I \neq \partial \Delta$ and that $I$ is either a smooth or rough interval.
\begin{itemize}
    \item[$\imath$)] $\Lambda$ is said to be \emph{completely surrounded} by $\Delta$, denoted $\Lambda \ll \Delta$, if $I=\emptyset$ and for every vertex $v \in \Lambda$ , the closed star $\overline{s(v)}$ is contained in $\Delta$, i.e., $\overline{s(v)} \subset \Delta$.

    \item[$\imath \imath$)]If $I \neq \emptyset$,
\begin{itemize}
    \item If $I$ is smooth: $\Lambda$ is said to be \emph{surrounded} by $\Delta$, denoted $\Lambda \Subset \Delta$, if for every vertex $v \in \Lambda \setminus I$, $\overline{s(v)} \subset \Delta$.
    
    \item If $I$ is rough: $\Lambda$ is said to be \emph{surrounded} by $\Delta$, denoted $\Lambda \Subset \Delta$, if for every vertex $v \in \Lambda$ such that $\overline{s(v)} \cap I = \emptyset$, it holds that $\overline{s(v)} \subset \Delta$.
\end{itemize}
\end{itemize}
\end{definition}

In Figure \ref{examplesurrounded}, examples of both surrounded and completely surrounded regions are illustrated.

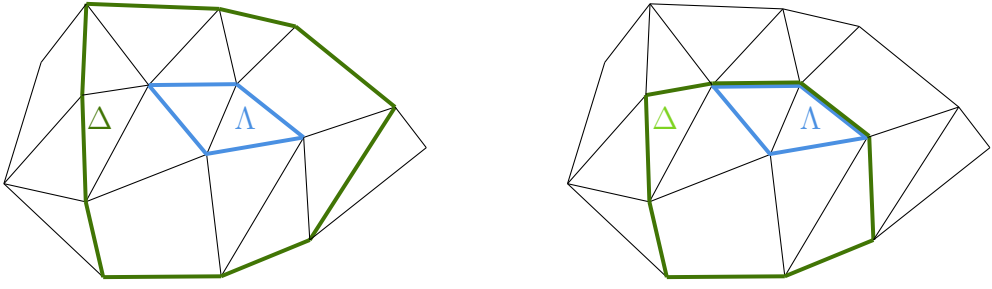
\begin{figure}[h!]
    \centering 
\begin{tikzpicture}[x=0.65pt,y=0.65pt,yscale=-1,xscale=1]

\draw [color={rgb, 255:red, 65; green, 117; blue, 5 }  ,draw opacity=1 ][line width=1.5]    (384.98,65.24) -- (423.65,58.38) ;
\draw [color={rgb, 255:red, 65; green, 117; blue, 5 }  ,draw opacity=1 ][line width=1.5]    (517.39,149.58) -- (514.95,89.05) ;
\draw    (137.83,170.74) -- (129.37,99.75) ;
\draw    (59.03,127.38) -- (129.37,99.75) ;
\draw    (59.03,127.38) -- (95.65,59.38) ;
\draw    (59.03,127.38) -- (11.39,116.8) ;
\draw    (69.17,171.22) -- (11.39,116.8) ;
\draw    (11.39,116.8) -- (56.98,65.24) ;
\draw    (11.39,116.8) -- (33.06,46.14) ;
\draw    (137.83,170.74) -- (185.84,89.85) ;
\draw    (129.37,99.75) -- (146.85,58.83) ;
\draw    (146.85,58.83) -- (136.71,14.99) ;
\draw    (95.65,59.38) -- (136.71,14.99) ;
\draw    (146.85,58.83) -- (181.16,25.24) ;
\draw [color={rgb, 255:red, 65; green, 117; blue, 5 } ,draw opacity=1 ][line width=1.5]    (136.71,14.99) -- (181.16,25.24) ;
\draw    (59.39,12.05) -- (95.65,59.38) ;
\draw [color={rgb, 255:red, 65; green, 117; blue, 5 }  ,draw opacity=1 ][line width=1.5]    (59.39,12.05) -- (136.71,14.99) ;
\draw [color={rgb, 255:red, 65; green, 117; blue, 5 }  ,draw opacity=1 ][line width=1.5]    (59.39,12.05) -- (56.98,65.24) ;
\draw    (185.84,89.85) -- (239.06,72.15) ;
\draw [color={rgb, 255:red, 65; green, 117; blue, 5 }  ,draw opacity=1 ][line width=1.5]    (181.16,25.24) -- (239.06,72.15) ;
\draw [color={rgb, 255:red, 65; green, 117; blue, 5 }  ,draw opacity=1 ][line width=1.5]    (189.39,149.58) -- (239.06,72.15) ;
\draw    (239.06,72.15) -- (257.2,95.82) ;
\draw    (189.39,149.58) -- (257.2,95.82) ;
\draw [color={rgb, 255:red, 74; green, 144; blue, 226 }  ,draw opacity=1 ][line width=1.5]    (128.62,99.61) -- (185.84,89.85) ;
\draw [color={rgb, 255:red, 74; green, 144; blue, 226 }  ,draw opacity=1 ][line width=1.5]    (146.85,58.83) -- (95.65,59.38) ;
\draw [color={rgb, 255:red, 74; green, 144; blue, 226 }  ,draw opacity=1 ][line width=1.5]    (146.85,58.83) -- (185.84,89.85) ;
\draw [color={rgb, 255:red, 65; green, 117; blue, 5 }  ,draw opacity=1 ][line width=1.5]    (59.03,127.38) -- (69.17,171.22) ;
\draw [color={rgb, 255:red, 65; green, 117; blue, 5 }  ,draw opacity=1 ][line width=1.5]    (59.03,127.38) -- (56.98,65.24) ;
\draw [color={rgb, 255:red, 65; green, 117; blue, 5 }  ,draw opacity=1 ][line width=1.5]    (137.83,170.74) -- (189.39,149.58) ;
\draw    (33.06,46.14) -- (59.39,12.05) ;
\draw [color={rgb, 255:red, 74; green, 144; blue, 226 }  ,draw opacity=1 ][line width=1.5]    (129.37,99.75) -- (95.65,59.38) ;
\draw [color={rgb, 255:red, 65; green, 117; blue, 5 } ,draw opacity=1 ][line width=1.5]    (137.83,170.74) -- (69.17,171.22) ;
\draw    (56.98,65.24) -- (95.65,59.38) ;
\draw    (189.39,149.58) -- (185.84,89.85) ;
\draw    (465.83,170.74) -- (457.37,99.75) ;
\draw    (387.03,127.38) -- (457.37,99.75) ;
\draw    (387.03,127.38) -- (423.65,59.38) ;
\draw    (387.03,127.38) -- (339.39,116.8) ;
\draw    (397.17,171.22) -- (339.39,116.8) ;
\draw    (339.39,116.8) -- (384.98,65.24) ;
\draw    (339.39,116.8) -- (361.06,46.14) ;
\draw    (465.83,170.74) -- (513.84,89.85) ;
\draw    (457.37,99.75) -- (474.85,58.83) ;
\draw    (474.85,58.83) -- (464.71,14.99) ;
\draw    (423.65,59.38) -- (464.71,14.99) ;
\draw    (474.85,58.83) -- (509.16,25.24) ;
\draw    (464.71,14.99) -- (509.16,25.24) ;
\draw    (387.39,12.05) -- (423.65,59.38) ;
\draw    (387.39,12.05) -- (464.71,14.99) ;
\draw    (387.39,12.05) -- (384.98,65.24) ;
\draw    (513.84,89.85) -- (567.06,72.15) ;
\draw    (509.16,25.24) -- (567.06,72.15) ;
\draw    (517.39,149.58) -- (567.06,72.15) ;
\draw    (567.06,72.15) -- (585.2,95.82) ;
\draw    (517.39,149.58) -- (585.2,95.82) ;
\draw [color={rgb, 255:red, 74; green, 144; blue, 226 }  ,draw opacity=1 ][line width=1.5]    (456.62,99.61) -- (513.84,89.85) ;
\draw [color={rgb, 255:red, 74; green, 144; blue, 226 }  ,draw opacity=1 ][line width=1.5]    (474.85,59.83) -- (423.65,60.38) ;
\draw [color={rgb, 255:red, 74; green, 144; blue, 226 }  ,draw opacity=1 ][line width=1.5]    (474.85,59.83) -- (512.84,89.85) ;
\draw [color={rgb, 255:red, 65; green, 117; blue, 5 }  ,draw opacity=1 ][line width=1.5]    (387.03,127.38) -- (397.17,171.22) ;
\draw [color={rgb, 255:red, 65; green, 117; blue, 5 }  ,draw opacity=1 ][line width=1.5]    (387.03,127.38) -- (384.98,65.24) ;
\draw [color={rgb, 255:red, 65; green, 117; blue, 5 }  ,draw opacity=1 ][line width=1.5]    (465.83,170.74) -- (517.39,149.58) ;
\draw    (361.06,46.14) -- (387.39,12.05) ;
\draw [color={rgb, 255:red, 74; green, 144; blue, 226 }  ,draw opacity=1 ][line width=1.5]    (457.37,99.75) -- (423.65,59.38) ;
\draw [color={rgb, 255:red, 65; green, 117; blue, 5 }  ,draw opacity=1 ][line width=1.5]    (465.83,170.74) -- (397.17,171.22) ;
\draw [color={rgb, 255:red, 65; green, 117; blue, 5 }  ,draw opacity=1 ][line width=1.5]    (423.65,58.38) -- (474.85,57.83) ;
\draw [color={rgb, 255:red, 65; green, 117; blue, 5 } ,draw opacity=1 ][line width=1.5]    (474.85,57.83) -- (494.52,73.14) -- (514.95,89.05) ;
\draw (473.17,70.71) node [anchor=north west][inner sep=0.75pt]  [color={rgb, 255:red, 74; green, 144; blue, 226 }  ,opacity=1 ]  {$\Lambda $};

\draw (144.17,70.71) node [anchor=north west][inner sep=0.75pt]  [color={rgb, 255:red, 74; green, 144; blue, 226 }  ,opacity=1 ]  {$\Lambda $};

\draw (387.17,70.71) node [anchor=north west][inner sep=0.75pt]  [color={rgb, 255:red, 65; green, 117; blue, 5 } ,opacity=1 ]  {$\Delta $};

\draw (58.17,70.71) node [anchor=north west][inner sep=0.75pt]  [color={rgb, 255:red, 65; green, 117; blue, 5 } ,opacity=1 ]  {$\Delta $};
\end{tikzpicture}
    \caption{On the left, there is an example where two regions, $\Lambda$ (in blue) and $\Delta$ (in green), have non-intersecting boundaries, with $\Lambda$ being completely surrounded by $\Delta$. On the right, there is an example where the boundaries intersect, and $\Lambda$ (in blue) is surrounded by $\Delta$ (in green).}
    \label{examplesurrounded}
\end{figure}
\begin{definition}[Boundary algebra]
    Given $\Lambda \Subset \Delta$, we define
\begin{align*}
    \mathfrak{B}\left(\Lambda \Subset \Delta\right):=\{x p_{\Delta} \mid & x \in p_{\Lambda} \mathfrak{A}(\Lambda) p_{\Lambda} \text { and } x p_{\Delta'}=p_{\Delta'} x \\
    & \text { whenever } \Lambda \Subset \Delta' \text { with } \partial \Lambda \cap \partial \Delta'=\partial \Lambda \cap \partial \Delta \}
\end{align*}
Observe that $\mathfrak{B}\left(\Lambda \Subset \Delta\right)$ is a unital $*$-algebra with identity $p_{\Delta}$.
\end{definition}  

\begin{definition}[Local topological order] \label{LTO}
We say $(\mathfrak{A}, p)$ is locally topologically ordered if it satisfies the following four axioms for sufficiently large $\Lambda$:
\begin{itemize}
    \item[(LTO1)]  Whenever $\Lambda \ll \Delta, p_{\Delta} \mathfrak{A}(\Lambda) p_{\Delta}=\mathbb{C} p_{\Delta}$.
    \item[(LTO2)] Whenever $\Lambda \Subset \Delta, p_{\Delta} \mathfrak{A}(\Lambda) p_{\Delta}=\mathfrak{B}\left(\Lambda \Subset \Delta\right) p_{\Delta}$.
    \item[(LTO3)] Whenever $\Lambda_1 \subset \Lambda_2 \Subset \Delta$ with $\partial \Lambda_1 \cap \partial \Delta=\partial \Lambda_2 \cap \partial \Delta, $ then $\mathfrak{B}\left(\Lambda_1 \Subset \Delta\right)=\mathfrak{B}\left(\Lambda_2 \Subset \Delta\right)$.
    \item[(LTO4)]  Whenever $\Lambda \Subset \Delta_1 \subset \Delta_2$ with $\partial 
    {\Lambda} \cap \partial 
    {\Delta_1}={\partial} \Lambda \cap {\partial \Delta_2}$, if $x \in \mathfrak{B}\left(\Lambda \Subset \Delta_1\right)$ satisfaies $x p_{\Delta_2}=0$, then $x=0$.
\end{itemize}
\end{definition}

\section{Local topological order for smooth boundaries} \label{SECCIONLTO}

In this section, we will prove that the Twisted Quantum Double model for finite groups satisfies the Local Topological Order (LTO) axioms for regions with a smooth boundary. This approach simplifies the proofs, since the rough boundary case requires additional considerations. Nevertheless, in Section \ref{LTOROUGH}, we will address the rough boundary case and explain the extra conditions that must be imposed. These additional requirements arise from the behavior of the twisted model near the boundary.

Therefore, throughout Section \ref{SECCIONLTO}, the regions $\Lambda$ we will consider are those with smooth boundaries, that is, $\partial \Lambda \subset \Lambda$.
\begin{theorem} \label{LTOTEO}
    The Twisted Quantum Double model for a finite group $G$, and for regions with smooth boundaries, satisfies the LTO axioms of Definition \ref{LTO}, where for $\Lambda \subset \cL$ we have
    \begin{align*}
        \mathfrak{A}(\Lambda):=\bigotimes_{\ell \in \Lambda} M_{|G|}(\mathbb{C}) && \text{and} && p_{\Lambda}:=\prod_{ {s(v)} \subset  \Lambda} A_v \prod_{f \subset \Lambda} B_f.
    \end{align*}
\end{theorem}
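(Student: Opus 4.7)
The strategy is to verify each of the four LTO axioms by combining two tools from Sections~\ref{sec:general lattice} and~\ref{GS}: the isometry $T$ reducing any lattice to its canonical triangular refinement, and the description of $\mathrm{Range}(p_\Lambda)$ as the invariant subspace of a monomial representation (Proposition~\ref{prop:monomial_representation}). Because the boundary is smooth, $\partial \Lambda \subset \Lambda$, so every face of $\Lambda$ contributes a $B_f$-factor to $p_\Lambda$, hence $\mathrm{Range}(p_\Lambda)$ sits inside the flat-coloring subspace $\cH^0$; the remaining $A_v$-factors then carry out the monomial averaging of the interior vertex symmetry group $H_\Lambda$ consisting of $G$-valued functions on the vertices $v$ with $s(v)\subset\Lambda$.

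For LTO1, the assumption $\Lambda\ll\Delta$ places every closed star of a vertex of $\Lambda$ strictly inside $\Delta$, so every $A_v$ and $B_f$ occurring in $p_\Lambda$ is a factor of $p_\Delta$ and $p_\Delta p_\Lambda = p_\Delta$. Writing $p_\Delta$ as the normalized sum over $H_\Delta$ of the vertex gauge transformations times the product of face projectors, and expanding $x\in\mathfrak{A}(\Lambda)$ in the matrix-unit basis indexed by $G$-colorings, the sum over $H_\Delta$ averages $x$ over its entire $H_\Delta$-orbit. Because a collar of interior $\Delta$-vertices separates $\Lambda$ from $\partial\Delta$, this averaging kills any coloring-dependence of $p_\Delta x p_\Delta$ on the edges of $\Lambda$, forcing the outcome to be a scalar multiple of $p_\Delta$. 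For LTO2, I would apply the same gauging argument but only to those interior vertices of $\Lambda$ whose closed stars are contained in $\Delta$: this squeezes $p_\Delta x p_\Delta$ into an operator supported on a collar neighborhood of the shared interval $I=\partial\Lambda\cap\partial\Delta$ which commutes with the vertex and face operators of $\Delta$ acting near $I$, placing it in $\mathfrak{B}(\Lambda\Subset\Delta)$; the reverse inclusion is definitional.

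LTO3 then follows because the collar produced in the LTO2 reduction is determined entirely by $I$: enlarging $\Lambda_1$ to $\Lambda_2$ with the same boundary overlap with $\Delta$ only introduces additional interior vertices whose gauge averaging leaves the collar data unchanged. LTO4 is a faithfulness statement: given $x\in\mathfrak{B}(\Lambda\Subset\Delta_1)$ with $xp_{\Delta_2}=0$, I would use the basis of regular $H_{\Delta_2}$-orbits from Proposition~\ref{prop:monomial_representation} to exhibit a ground state of $\Delta_2$ detecting any nonzero boundary action of $x$; since none is allowed, $x$ already vanishes on $\mathrm{Range}(p_{\Delta_1})$ and hence $x=0$ in $\mathfrak{B}(\Lambda\Subset\Delta_1)$.

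The main technical obstacle I anticipate is the bookkeeping of the 3-cocycle phases $\alpha_v^g$ from Equation~\eqref{starfase}, and especially those attached to ghost edges introduced by $T$ when $v$ sits near $\partial\Lambda$; these couple a near-boundary vertex operator to portions of the lattice outside $\Lambda$. Showing that the phases assemble into consistent boundary data — so that the gauge-averaging arguments for LTO1 and LTO2 truly yield scalars and collar operators respectively — will require the 2-cocycle identity for $\beta_x$ from Definition~\ref{def:beta}, together with its analogues on products of stars running along $\partial\Lambda$. Once these cohomological identities are in place, the remaining manipulations follow the pattern of the untwisted Kitaev LTO proof in \cite{chuah2024boundary}, but with every orbit-average carrying the appropriate twisted phase.
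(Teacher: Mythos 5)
Your outline matches the paper's strategy at the level of slogans (gauge-averaging the interior for LTO1/LTO2, boundary-interval dependence for LTO3, a faithfulness/linear-independence argument for LTO4), but the central construction that makes LTO2 and LTO3 provable is missing. The paper does not merely ``squeeze $p_\Delta x p_\Delta$ into a collar'': it exhibits an explicit boundary algebra $\mathfrak{D}(I)$ generated by the edge projections $P_l^g$ for $l\in I$ and by truncated vertex operators $C_v^g$ acting on $s(v)\cap\Lambda$ whose phase is the restriction $\alpha_v^g|_\Lambda$ of \eqref{starfase} to faces contained in $\Lambda$, and then proves the chain $\mathfrak{D}(I)p_\Delta\subseteq\mathfrak{B}(\Lambda\Subset\Delta)\subseteq p_\Delta\mathfrak{A}(\Lambda)p_\Delta\subseteq\mathfrak{D}(I)p_\Delta$. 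Without naming these generators you cannot verify the commutation condition ``$xp_{\Delta'}=p_{\Delta'}x$ for all admissible $\Delta'$'' in the definition of $\mathfrak{B}$, and LTO3 (which in the paper is one line: $\mathfrak{D}(I)$ depends only on $I$) has nothing to rest on. Also note that the reverse inclusion $\mathfrak{B}(\Lambda\Subset\Delta)\subseteq p_\Delta\mathfrak{A}(\Lambda)p_\Delta$ is the easy one; the hard direction is $p_\Delta\mathfrak{A}(\Lambda)p_\Delta\subseteq\mathfrak{D}(I)p_\Delta$, which you have not addressed.

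Two further points. First, for LTO1 the claim that ``averaging over $H_\Delta$ kills any coloring-dependence'' is not an argument: the paper's proof expands $x=L_{c_1}P_{c_2}$, uses the holonomy constraint (Proposition~\ref{proigualdad}) to show the left-translation part on edges not covered by interior stars must be trivial whenever $p_\Delta x p_\Delta\neq 0$, then constructs an explicit ordered sequence of $A_{v_i}^{g_i}$ (a peeling from the boundary inward) absorbing $L_{c_1}$ into a scalar, and finally shows via a spanning-tree conjugation that all flat projectors $P_d$ yield the same $p_\Delta P_d p_\Delta$. Each of these three steps is needed and none follows from ``a collar of interior vertices separates $\Lambda$ from $\partial\Delta$.'' Second, your anticipated resolution of the phase bookkeeping via the 2-cocycle identity for $\beta_x$ points in the wrong direction: in the paper's LTO proofs the cocycle phases are never cancelled against each other; they are simply accumulated into overall scalars ($a_{c_1}$, $b_{c_1}$) attached to the chosen products of vertex operators, and the $\beta_x$ identities play no role outside the closed-surface ground-state computation of Section~\ref{GS}. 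Likewise, Proposition~\ref{prop:monomial_representation} and the isometry $T$ are not used in the LTO arguments; invoking regular orbits for LTO4 would additionally require you to check that no non-regular colorings arise, whereas the paper's proof is a direct linear-independence count of the vectors $A^{(g_N)}(x_k\ket{c})$.
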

\begin{remark}
    Note that although the action of \( A^g_v \) only affects the edges in the star \( s(v) \), \( A^g_v \) also introduces a phase determined by the function \( \alpha_{v}^g \). To compute \( \alpha_{v}^g \), we need not only the edges in \( s(v) \), but also all the edges belonging to the closed star \( \overline{s(v)} \). Thus, the vertex operator \( A^g_v \) can be defined in a region only if \( \overline{s(v)} \) is completely contained within that region. However, since we are dealing with regions that have a smooth boundary, if \( s(v) \subset \Lambda \), then \( \overline{s(v)} \subset \Lambda \).
\end{remark}

To prove Theorem \ref{LTOTEO}, we will rely on a series of observations and preliminary results. Initially, let us consider a finite group $G$.  Consider the following operators over $\C[G]$: $L_g$ the left translation by $g$, and $P_h$ the projection onto $|h\rangle$.
\begin{align} \label{leftyproyection}
    L_g|k\rangle:=|g k\rangle, && P_h|k\rangle:=\delta_{h,k}|k\rangle.
\end{align}
The set \(\left\{L_g P_h \mid g, h \in G\right\}\) forms a basis for the matrix algebra of linear operators on the group algebra. This can be verified by noting that this set is orthogonal with respect to the Hilbert–Schmidt inner product, which, for any two operators \(A\) and \(B\), is defined as 
\begin{align}
\langle A,B \rangle_{\text{HS}} := \operatorname{Tr}(A^{\dagger}B) = \sum_{g \in G} \langle g|A^\dagger B|g \rangle.
\end{align}
Instead of left translation operators, we can also use the right translation operators $R_g$ given by $R_g|k\rangle=|k g\rangle$. These operators satisfy the relations $
L_g P_h=P_{g h} L_g$, and $R_g P_h=P_{h g} R_g$.

When we are in a region $\Lambda$, we indicate the location of operators by either drawing pictures with sites labeled by these operators, or we write
$P_l^g$, $L_l^g$, $R_l^g$ for the operators $P_g, L_g, R_g$ respectively
acting on the edge $l$.

\begin{definition}
 Let $\Lambda \subset \cL$ be a region,  by abuse of notation we identify $\Lambda$ with the set of edges contained in $\Lambda$. Given a coloring $c :\Lambda \to G$, we define the vector $\ket{c}=\bigotimes_{l \in \Lambda}\ket{c(l)}$ and the operators \begin{align*}
     L_c:= \bigotimes_{l \in \Lambda} L^{c(l)}_l && \text{and} && P_c:= \bigotimes_{l \in \Lambda} P^{c(l)}_l
 \end{align*}
Note that any $x \in \mathfrak{A}(\Lambda)$ can be written as a linear combination of operators of the form $L_{c_1} P_{c_2}$.  Furthermore, $\ket{c}$ is the only vector for which $P_{c}\ket{c} = \ket{c}$. We say $c:\Lambda \to G $ is flat if $ B_f\ket{c} =\ket{c}$ for all
faces $f \subset \Lambda$, we call $\ket{c}$ a flat vector of $\Lambda$, and $P_c$ a flat operator of $\Lambda$.
\end{definition}

\begin{definition}
    Let $\Lambda \subset \Delta$ be regions. For a coloring $c: \Lambda \rightarrow G$, , we define the extension of $L_{c}$ to the region $\Delta$ as $$L_{c}^\Delta :=  \left( \bigotimes_{l \in \Lambda} L^{c(l)}_l \right) \otimes \left( \bigotimes_{l \in \Delta \setminus \Lambda} \operatorname{Id}_l \right)$$
\end{definition}

\begin{proposition} \label{proigualdad}
    Let $\Lambda \subset \Delta$, $c_1, c_2, c_3: \Lambda \rightarrow G$, and $\cC$ a cycle in $\Delta$ with edges $\{l_1, \dots, l_m\}$. If the operators $L_{c_1}^\Delta$ and $L_{c_2}^\Delta$ coincide on $(m-1)$ edges of $\cC$, and both $p_{\Delta} L_{c_1} P_{c_3} p_{\Delta} \neq 0$ and $p_{\Delta} L_{c_2} P_{c_3} p_{\Delta} \neq 0$, then $L_{c_1}^\Delta$ and $L_{c_2}^\Delta$ coincide on all of $\cC$.
\end{proposition}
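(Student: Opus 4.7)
The plan is to argue by contradiction, leveraging the cycle holonomy forced by flatness. Suppose $L_{c_1}^\Delta$ and $L_{c_2}^\Delta$ differ on exactly one edge $l_k \in \cC$; since both operators act as the identity on edges outside $\Lambda$, necessarily $l_k \in \Lambda$ and $c_1(l_k) \neq c_2(l_k)$, while $c_1(l) = c_2(l)$ for every other edge $l \in \cC \cap \Lambda$. The first step is to extract flat colorings from the nonzero hypotheses: expanding a matrix element $\bra{g'} L_{c_i} P_{c_3} \ket{g}$ in the flat basis (with $\ket{g}, \ket{g'}$ ground states of $p_\Delta$), and using that each operator $B_f$ as well as $P_{c_3}$ is diagonal in this basis, the inequality $p_\Delta L_{c_i} P_{c_3} p_\Delta \neq 0$ forces the existence of a flat coloring $\varphi_i$ of $\Delta$ with $\varphi_i|_\Lambda = c_3$ such that $\psi_i := L_{c_i}\varphi_i$ is also flat, for $i = 1, 2$.

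The key geometric input is that $\Delta$, being a convex region, is a disk and hence simply connected, so parallel transport of any flat coloring is path-independent. Decompose $\cC$ cyclically into alternating maximal segments $S_1, T_1, \ldots, S_r, T_r$, where each $S_j$ consists of $\Lambda$-edges and each $T_j$ of $(\Delta\setminus\Lambda)$-edges, with transitions at vertices of $\partial\Lambda$; assume WLOG $l_k \in S_1$. For each $T_j$ with endpoints $v_j^{\mathrm{in}}, v_j^{\mathrm{out}} \in \partial\Lambda$, the $\psi_i$-transport along $T_j$ equals the $\varphi_i$-transport along $T_j$ (as $\psi_i = \varphi_i$ outside $\Lambda$), which by path-independence in $\Delta$ equals the $\varphi_i$-transport along any path $\tilde T_j \subset \Lambda$ joining the same endpoints; the latter is determined by $c_3$ alone. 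Call this common value $h_j$; it is independent of $i$.

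Since $\cC$ is contractible in $\Delta$ and each $\psi_i$ is flat, $(\psi_i)_\cC = e$, yielding
\[
q_1(c_i)\,h_1\,q_2(c_i)\,h_2 \cdots q_r(c_i)\,h_r = e \qquad (i = 1, 2),
\]
where $q_j(c_i) := \prod_{l \in S_j}(c_i(l)c_3(l))^{\sigma_l}$ denotes the ordered $\psi_i$-holonomy along $S_j$. Because $c_1 = c_2$ off $l_k$, we have $q_j(c_1) = q_j(c_2)$ for all $j \geq 2$, and the segment containing $l_k$ factors as $q_1(c_i) = A\,(c_i(l_k)c_3(l_k))^{\sigma_k}\,B$ with $A, B$ independent of $i$. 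Substituting, the equation collapses to $A\,(c_i(l_k)c_3(l_k))^{\sigma_k}\,C = e$ with $C$ independent of $i$, forcing $(c_1(l_k)c_3(l_k))^{\sigma_k} = (c_2(l_k)c_3(l_k))^{\sigma_k}$ and hence $c_1(l_k) = c_2(l_k)$, the desired contradiction.

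The main obstacle is the second step, arranging that the $T_j$-contributions be independent of $i$: a priori $\varphi_1$ and $\varphi_2$ may differ arbitrarily on $\Delta \setminus \Lambda$, and without further input there is no reason for the $T_j$-transports under these two colorings to match. The combination of simple connectedness of $\Delta$ with the flatness of each $\varphi_i$ is exactly what lets one reroute each $T_j$-transport through a path inside $\Lambda$ and thereby express it purely in terms of $c_3$.
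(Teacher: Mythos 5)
Your proposal is correct and follows essentially the same route as the paper's proof: extract flat colorings of $\Delta$ extending $c_3$ from the two nonvanishing conditions, impose trivial holonomy of the shifted colorings around $\cC$, and eliminate the dependence on the (possibly different) extensions outside $\Lambda$ by rerouting the $\Delta\setminus\Lambda$ portions of $\cC$ through $\Lambda$, where everything is pinned down by $c_3$. The only difference is cosmetic: the paper performs the rerouting by using the trivial holonomy of the base coloring around $\cC$ itself to replace the outside segment by the complementary arc of the cycle, whereas you invoke contractibility of cycles in $\Delta$ to reroute each outside segment through an arbitrary path in $\Lambda$ — your version is, if anything, slightly more explicit about the fact that $\varphi_1$ and $\varphi_2$ need not agree off $\Lambda$.
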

\begin{proof}
    Suppose that $L_{c_1}^\Delta$ and $L_{c_2}^\Delta$ coincide on the edges $\{l_1, \dots, l_{m-1}\}$. If $l_m \not\in \Lambda$, then the proposition holds trivially.  Now, let us explore the case when $l_m \in \Lambda$.
    
    For the basis element $\ket{g}=\bigotimes_{l \in \Delta}\ket{g_l}$, we can express $\ket{g} = \ket{g}_\Lambda \ket{g}_{\bar{\Lambda}}$, where $\bar{\Lambda}$ represents the complement of $\Lambda$ in $\Delta$. Since $p_{\Delta} L_{c_1} P_{c_3} p_{\Delta} \neq 0$, it implies that $\ket{c_3} = \ket{c_3}_\Lambda$ must be a flat vector. Consequently, there are vectors of the form $\ket{c_3} \ket{g}_{\bar{\Lambda}}$ such that $B_f\ket{c_3} \ket{g}_{\bar{\Lambda}} = \ket{c_3} \ket{g}_{\bar{\Lambda}}$ for every face operator $B_f$ with $f \in \Delta$. This implies that the holonomy of $\ket{c_3} \ket{g}_{\bar{\Lambda}}$ on the cycle $\mathcal{C}$ is the identity. If the elements of $\ket{c_3} \ket{g}_{\bar{\Lambda}}$ on the cycle $\mathcal{C}$ are denoted as $g_1, \dots, g_m$, then $g_1^{\epsilon(1)} \cdots g_m^{\epsilon(m)} = e$, where $\epsilon(i) \in \{ 1,-1 \}$ depends on the orientation of edge $l_i$. Since $p_{\Delta} L_{c_1} P_{c_3} p_{\Delta} \neq 0$, then $L_{c_1}\ket{c_3} \ket{g}_{\bar{\Lambda}} = L_{c_1}^\Delta \ket{c_3} \ket{g}_{\bar{\Lambda}}$ must also have a trivial holonomy, which implies, $$(L_{l_1}^{c_1(l_1)}g_1)^{\epsilon(1)} \cdots (L_{l_m}^{c_1(l_m)}g_m)^{\epsilon(m)}=e.$$ Now, if an edge $l_i \in \mathcal{C}$ is not in $\Lambda$, we have $L_{l_i}^{c_1(l_i)} = \operatorname{Id}$, leading to $$(L_{l_1}^{c_1(l_1)}g_1)^{\epsilon(1)} \cdots  \underbrace { (g_a)^{\epsilon(a)} \cdots (g_{b})^{\epsilon({b})} }_{ \text{edges in }\Delta \setminus \Lambda }  \cdots (L_{l_m}^{c_1(l_m)}g_m)^{\epsilon(m)}  = \operatorname{Id}.$$
    Then,
    \begin{align*}
        (L_{l_{m}}^{c_1(l_{m})}g_{m})^{\epsilon({m})} & = (L_{l_{m-1}}^{c_1(l_{m-1})}g_{m-1})^{-\epsilon({m-1})} \cdots g_b^{-\epsilon(b)} \cdots g_{a}^{-\epsilon({a})}  \cdots (L_{l_1}^{c_1(l_1)}g_1)^{-\epsilon(1)}_.
    \end{align*}    
    Since $g_1^{\epsilon(1)} \cdots g_m^{\epsilon(m)} = e$, it follows that $g_{b+1}^{\epsilon(b+1)} \cdots g_{m}^{\epsilon(m)}g_{1}^{\epsilon(1)}\cdots g_{a-1}^{\epsilon(a-1)} = g_b^{-\epsilon(b)} \cdots g_{a}^{-\epsilon({a})}$. Therefore,
    \begin{align*}
        (L_{l_{m}}^{c_1(l_{m})}g_{m})^{\epsilon({m})} & = (L_{l_{m-1}}^{c_1(l_{m-1})}g_{m-1})^{-\epsilon({m-1})} \cdots g_{b+1}^{\epsilon(b+1)} \cdots g_{a-1}^{\epsilon(a-1)}  \cdots (L_{l_1}^{c_1(l_1)}g_1)^{-\epsilon(1)}_.
    \end{align*}    
      This demonstrates that $L_{l_m}^{c_1(l_m)}$ only depends on the fixed state $\ket{c_3}$ and on the other $(m-1)$ left operators in $L_{c_1} ^{\Delta}$. A similar argument can be applied to $L_{l_m}^{c_2(l_m)}$. Since $L_{l_i}^{c_1(l_i)} = L_{l_i}^{ c_2(l_i)}$ for $1 \leq i < m$, we obtain that
    \begin{align*}
         (L_{l_{m}}^{c_1(l_{m})}g_{m})^{\epsilon({m})} & = (L_{l_{m-1}}^{c_1(l_{m-1})}g_{m-1})^{-\epsilon({m-1})} \cdots g_{b+1}^{\epsilon(b+1)} \cdots g_{a-1}^{\epsilon(a-1)}  \cdots (L_{l_1}^{c_1(l_1)}g_1)^{-\epsilon(1)}\\
         & = (L_{l_{m-1}}^{c_2(l_{m-1})}g_{m-1})^{-\epsilon({m-1})} \cdots g_{b+1}^{\epsilon(b+1)} \cdots g_{a-1}^{\epsilon(a-1)}  \cdots (L_{l_1}^{c_2(l_1)}g_1)^{-\epsilon(1)}\\
         & = (L_{l_{m}}^{c_2(l_{m})}g_{m})^{\epsilon({m})}
    \end{align*}
    Thus, we conclude that $L_{l_{m}}^{c_1(l_{m})} =L_{l_{m}}^{c_2(l_{m})}.$
\end{proof} 

\begin{theorem} \label{teoLTO1}
    The Twisted Quantum Double model for a finite group $G$ satisfies the LTO1, that is,  if $\Lambda \ll \Delta$, then $p_{\Delta} \mathfrak{A}(\Lambda)p_{\Delta} = \C p_{\Delta}$.    
\end{theorem}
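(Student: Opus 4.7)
The plan is to decompose an arbitrary element of $\mathfrak{A}(\Lambda)$ as a linear combination of operators $L_{c_1} P_{c_2}$ for colorings $c_1, c_2 \colon \Lambda \to G$, and show that each $p_\Delta L_{c_1} P_{c_2} p_\Delta$ lies in $\mathbb{C} p_\Delta$. By linearity this yields $p_\Delta \mathfrak{A}(\Lambda) p_\Delta \subseteq \mathbb{C} p_\Delta$, and the reverse inclusion is immediate since $1 \in \mathfrak{A}(\Lambda)$.

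Fix $c_1, c_2$ and suppose $p_\Delta L_{c_1} P_{c_2} p_\Delta \neq 0$. Because $p_\Delta$ contains $B_f$ for every $f \subset \Delta$ and $\Lambda \subset \Delta$ (since $\Lambda \ll \Delta$), its range consists of states flat on $\Lambda$, so $c_2$ must be a flat coloring of $\Lambda$. Likewise, for any $\ket{\varphi}$ in the range of $p_\Delta$ with $\varphi|_\Lambda = c_2$, the coloring $L_{c_1}\varphi$ (unchanged outside $\Lambda$) must be flat on every face of $\Delta$. Now the hypothesis $\Lambda \ll \Delta$ gives $\overline{s(v)} \subset \Delta$ for each $v \in V(\Lambda)$, so every edge of $\Lambda$ lies on a cycle in $\Delta$. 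The strategy is to construct a candidate coboundary $c_1' = \partial \eta$, for some $\eta \colon V(\Lambda) \to G$, that agrees with $c_1$ on a spanning tree of $\Lambda$, and then to apply Proposition \ref{proigualdad} cycle-by-cycle to conclude $c_1 = c_1'$ everywhere: on each cycle obtained by closing a non-tree edge, $c_1$ and $c_1'$ already agree on the $(m-1)$ tree edges, so the proposition forces agreement on the remaining edge.

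Once $c_1$ is identified with the coboundary $c_1' = \partial \eta$, the action of $L_{c_1}$ on flat states matches, up to a $U(1)$-valued phase $\omega$ determined by the 3-cocycle $\alpha$, the permutation parts of the twisted vertex operators $A_v^{\eta(v)} = D_v^{\eta(v), \alpha} P_v^{\eta(v)}$. Since $\overline{s(v)} \subset \Delta$ for each $v \in V(\Lambda)$, these $A_v^{\eta(v)}$ are all defined on $\Delta$, and we can write $p_\Delta L_{c_1} P_{c_2} p_\Delta = \omega \cdot p_\Delta \prod_{v \in V(\Lambda)} A_v^{\eta(v)} P_{c_2} p_\Delta$. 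Combining $p_\Delta A_v = p_\Delta$ (which holds for each $v \in V(\Lambda)$ because $A_v$ is a factor of $p_\Delta$) with an average over $\eta$, using the monomial-representation structure of the ground space from Section \ref{GS}, collapses $p_\Delta \prod_v A_v^{\eta(v)} P_{c_2} p_\Delta$ to a scalar multiple of $p_\Delta$, concluding the proof.

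The main obstacle is the inductive step using Proposition \ref{proigualdad} in the second paragraph: one must verify that the candidate coboundary $c_1'$ itself yields a nonzero $p_\Delta L_{c_1'} P_{c_2} p_\Delta$ so that the proposition's hypotheses hold, and one must carefully choose the spanning tree and cycle-completion procedure so that each non-tree edge is pinned down from the tree edges via a single cycle lying in $\Delta$. The subsequent extraction of the $U(1)$-phase $\omega$ and the final averaging argument are conceptually straightforward but demand careful bookkeeping of the 3-cocycle $\alpha$, which is the ingredient distinguishing the twisted model from the original Kitaev model.
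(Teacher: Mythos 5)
Your overall strategy coincides with the paper's: decompose elements of $\mathfrak{A}(\Lambda)$ into operators $L_{c_1}P_{c_2}$, use Proposition~\ref{proigualdad} together with $\Lambda\ll\Delta$ to pin down the translation part $c_1$, realize $L_{c_1}$ on the range of $P_{c_2}$ as the permutation part of a product of vertex operators, and let $p_\Delta$ absorb those operators so that only $a_{c_1}\,p_\Delta P_{c_2}p_\Delta$ remains. Two points, however, are not yet proofs. First, $c_1$ is not a plain coboundary $\partial\eta$: on an edge $l=[uv]$ the permutation part of $\prod_w A_w^{\eta(w)}$ sends $h\mapsto \eta(u)\,h\,\eta(v)^{-1}$, a simultaneous left and right translation, and it only becomes the left translation $L_{c_1(l)}$ on the range of $P_{c_2}$ via $R_gL_hP_k=L_{hkgk^{-1}}P_k$; the correct relation is $c_1(l)=\eta(u)\,c_2(l)\,\eta(v)^{-1}c_2(l)^{-1}$. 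Relatedly, your cycle-by-cycle application of Proposition~\ref{proigualdad} requires knowing $p_\Delta L_{c_1'}P_{c_2}p_\Delta\neq 0$ for the \emph{candidate} $c_1'$, which you flag but do not resolve; the paper sidesteps this by constructing the gauge transformation directly (peeling $\Lambda$ from the outside in through the sets $F_j$) rather than positing a candidate and comparing.

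The more substantive gap is the last step. After absorption you are left with $a_{c_1}\,p_\Delta P_{c_2}p_\Delta$, and the claim that this lies in $\mathbb{C}p_\Delta$ does not follow from ``$p_\Delta A_v=p_\Delta$ plus an average over $\eta$'': averaging $\prod_v A_v^{\eta(v)}$ over $\eta$ merely reproduces $\prod_v A_v$, which is already absorbed, and says nothing about the single flat projector $P_{c_2}$. What is needed (the paper's Steps 2 and 3) is: (i) for any two flat colorings $d_1,d_2$ of $\Lambda$, $p_\Delta P_{d_1}p_\Delta=p_\Delta P_{d_2}p_\Delta$, proved by conjugating $P_{d_1}$ into $P_{d_2}$ with vertex operators chosen along a spanning tree of $\Lambda$, flatness determining the non-tree edges; and (ii) $\sum_{d\ \mathrm{flat}}P_d=\prod_{f\subset\Lambda}B_f$, which $p_\Delta$ absorbs, yielding $p_\Delta P_{c_2}p_\Delta=N^{-1}p_\Delta$ with $N$ the number of flat colorings. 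Point (i) is a genuine argument using that all flat colorings of the disk region $\Lambda$ are gauge-equivalent, not bookkeeping, and your proposal must supply it.
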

\begin{remark}
    The proofs we present to show that the Twisted model satisfies the LTO axioms follow the spirit of those in \cite{chuah2024boundary}, where it was established that Kitaev's model fulfills these axioms. However, our approach involves two additional considerations that lead to new arguments: first, the presence of the $3$-cocycle $\alpha$, and second, the generality of the lattice structure, which is no longer necessarily square.
\end{remark}
\begin{proof}
Let $\Lambda \ll \Delta$ and $x \in \mathfrak{A}(\Lambda)$. Suppose $x=L_{c_1} P_{c_2}$ for some $c_1, c_2: \Lambda \rightarrow G$.\\

Step 1: Let us see that if $p_{\Delta} x p_{\Delta} \neq 0$, then $p_{\Delta} x p_{\Delta}=p_{\Delta} a_{c_1} P_{c_2} p_{\Delta}$ for some $a_{c_1} \in \C$. To establish this, we aim to identify star operators \( A_{v_i}^{g_i} \), where \( s(v_i) \subset \Lambda \), such that \(
A_{v_n}^{g_n} \cdots A_{v_1}^{g_1} L_{c_1} P_{c_2} = a_{c_1}  P_{c_2}\). First, for every edge $l \in \Lambda$ that does not have any of its stars contained in $\Lambda$ (particularly the edges of the smooth boundary $\partial \Lambda$), we have that $L_{l}^{c_1(l)} = \operatorname{Id}$. This is because there exists a cycle in $\Delta$ where $l$ is the only edge in $\Lambda$. By Proposition \ref{proigualdad} taking the operator $L_{c_1}$ and the identity, we conclude that $L_{l}^{c_1(l)} =  \operatorname{Id}$. To choose the $A_{v_i}^{g_i}$'s, we will recursively define some sets; $\Lambda_j, E_j$ are a set of edges and $F_j$ is a set of stars.
    \begin{align*}
    \Lambda_1 & =  \{ l \in \Lambda\mid \text{there is a star } s \subset \Lambda  \text{ and } l \in s \}, \\
    E_1 & = \{ l \in \Lambda_1\mid \text{there is a star } s \not\subset \Lambda_1  \text{ and } l \in s\},\\
     F_1 & = \{ s \subset \Lambda_1\mid \text{there is an edge } l \in s \text{ such that } l \in E_1 \}.
     \end{align*}
Consider the recursive formula. For each $j$, we define
\begin{align*}
    \Lambda_j  & = \{ l \in \Lambda_{j-1} \setminus E_{j-1}\mid \text{there is a star } s \subset \Lambda_{j-1} \setminus E_{j-1}  \text{ and } l \in s \},\\
    E_j & = \{ l \in \Lambda_j\mid \text{there is a star } s \not\subset \Lambda_j  \text{ and } l \in s \},\\   
  F_j & = \{ s\subset \Lambda_j\mid \text{there is an edge } l \in s \text{ such that } l \in E_j \}.
\end{align*}
Note that by construction, if $\Lambda_j \neq \emptyset$ then there is a edge in $\Lambda_j$ that has at least one of its stars contained in $\Lambda_j$, and so $F_j \neq \emptyset $. The sets $F_j$'s are disjoint sets of stars, whose union results in all the stars contained in $\Lambda$. We begin by listing the stars in $F_1 
= \{s_1, \dots,s_{m_1} \}$. Let $v_1$ the vertex such that $s(v_1) =s_1$, and we choose an edge $l_1 \in s_1 \cap E_1$,   since $(L_{c_1}P_{c_2})|_{l_1} =  L_{l_1}^{h_1}P_{l_1}^{k_1}$ for some $h_1,k_1 \in G$, then due the relations
\begin{align*}
    R_g L_h P_k=L_{h k g k^{-1}} P_k && \text {and} && L_g L_h P_k=L_{g h} P_k \text {, }
\end{align*}
we define $A_{v_1}^{g_1}$ as follows, if  $l_1$ points away from $v_1$, we take $g_1 = h_1^{-1}$; if $l_1$ points to 
$v_1$, we take $g_1= k_1^{-1}h_1k_1$. In both cases,  $A_{v_1}^{g_1}L_{c_1}P_{c_2} = 
 a_1 L_{c'_1}P_{c_2}$ where $(L_{c'_1}P_{c_2})|_{l_1} =  P_{l_1}^{k_1}$, and $a_1 = \alpha_{v_1}^{g_1}(L_{c_1}\ket{c_2})$, (the scalar resulting from applying $A_{v_1}^{g_1}$ to $L_{c_1}\ket{c_2}$). We now proceed with the star $s_2 \in F_1$.  Let $v_2$ the vertex such that $s(v_2) =s_2$, we also chose an edge  $l_2 \in s_2 \cap E_1$. In the same way as we defined $A_{v_1}^{g_1}$, we can define an operator $A_{v_2}^{g_2}$ such that $A_{v_2}^{g_2}a_1L_{c'_1}P_{c_2} = 
 a_2 L_{c''_1}P_{c_2}$, where $(L_{c''_1}P_{c_2})|_{l_2} =  P_{l_2}^{k_2}$. Thus, $$A_{v_2}^{g_2}A_{v_1}^{g_1}L_{c_1}P_{c_2} = a_2 L_{c''_1}P_{c_2},$$ where $a_2= a_1 \alpha_{v_2}^{g_2}(L_{c'_1}\ket{c_2})$. Note that $a_2$ depends on both $A_{v_1}^{g_1}$ and $A_{v_2}^{g_2}$. We continue in this manner until we have covered all the stars in $F_1$. At this point, we can assert that we have found the $A_{v_i}^{g_i}$'s that match $L_{c_1}P_{c_2}$ with $P_{ c_2}$ on the outermost edges of $\Lambda$. Next, we repeat the same process with $F_2$. That is, for each $s_i \in F_2$, we choose an edge $l_i \in s_i \cap E_2$ and define the operator $A_{v_i}^{g_i}$ accordingly. Once we finish with all the stars in $F_2$, we proceed systematically with the remaining $F_j$ for $j > 2$. We continue this procedure until we encounter $F_j = \emptyset$, where we stop.
 In conclusion, we obtain an operator $A_{v_i}^{g_i}$ for each star contained in $\Lambda$. Hence, if $\Lambda$ contains $n$ stars, we will have $$A_{v_1}^{g_1} \cdots A_{v_n}^{g_n} L_{c_1} P_{c_2}= a_{c_1}L_{c_3}P_{c_2},$$ where $a_{c_1}  \in \C$ depends on the operators  $A_{v_i}^{g_i}$. 
 The operator $L_{c_3}P_{c_2}$ is such that for every edge $l_i$ that was chosen for the star $s_i$, we have $(L_{c_3}P_{c_2})|_{l_i } = P_{l_i}^{k_i}$, meaning $L_{l_i}^{c_3(l_i)} = \operatorname{Id}$. Now, considering other edges in $\Lambda$ not in $\{ l_1, \dots,l_n \}$, let $l \not\in \{ l_1, \dots,l_n \}$. Due to how the edges ${l_i}$ were chosen from each of the two vertices incident to $l$, we have a path made by edges of the set $\{ l_1, \dots,l_n \}$ where both paths end at the boundary of $\Lambda$. As $\Lambda \ll \Delta$, we can join these paths using edges in $\Delta \setminus \Lambda$, resulting in a cycle. By Proposition \ref{proigualdad} taking the operator $L_{c_3}$ and the identity, we conclude that $L_{l}^{c_3(l)} = \operatorname{Id}$. Therefore, $L_{c_3} = \operatorname{Id}$, and hence $A_{v_1}^{g_1} \cdots A_{v_n}^{g_n} L_{c_1} P_{c_2}= a_{c_1}P_{ c_2}$. Since $p_{\Delta}$ absorbs every $A_v^{g}$, then
\begin{align*}
p_{\Delta} L_{c_1} P_{c_2} p_{\Delta} =p_{\Delta} A_{v_1}^{g_1} \cdots A_{v_n}^{g_n} L_{c_1} P_{c_2} p_{\Delta} 
=p_{\Delta} a_{c_1} L_{c_3}P_{c_2} p_{\Delta}.
\end{align*} 
Furthermore, since $p_{\Delta} L_{c_1}P_{c_2} p_{\Delta} \neq 0$, then $p_{\Delta}  P_{c_2} p_{\Delta} \neq 0$. Thus, by definition, $P_{c_2}$ is flat.\\

Step 2: Let us prove that whenever $d_1,d_2: \Lambda \to G$ are flat, $p_{\Delta} P_{d_1} p_{\Delta} = p_{\Delta}  P_{d_2} p_{\Delta}$. We begin by selecting a spanning tree, $T$, of $\Lambda$. Since $T$ is a tree, there are vertices incident to only one edge in $T$.  Let us denote one of these vertices as $v_0$, and the corresponding edge in $T$ as $l_1$. Denote $v_1$ as the other vertex that is an endpoint of $l_1$, (note that $s(v_1) \subset \Delta$, but not necessarily $s(v_1) \subset \Lambda$). We consider $d_1(l_1)$ and $d_2(l_1)$ and use the relations
\begin{align*}
    R_gP_hR_{g^{-1}} = P_{hg} && \text{and} && L_gP_hL_{g^{-1}} = P_{gh},
\end{align*}
to define $A_{v_1}^{g_1}$ as follows, if $l_1$ points away from $v_1$, then we take $g_1 = d_2(l_1)d_1(l_1)^{-1}$, if $l_1$ points to $v_1$, then we take $g_1= d_2(l_1)^{-1}d_1(l_1)$. In either case, $A_{v_1}^{g_1}P_{d_1}(A_{v _1}^{g_1})^{-1} = 
 P_{d'_1}$ with $d'_1(l_1) = d_2(l_1)$. Moving to the next edge $l_2$ in $T$, adjacent to $l_1$, take $v_2$ the vertex that is an endpoint of $l_2$, but is not an endpoint of $l_1$. Similarly to $A_{v_1}^{g_1}$, we define an operator $A_{v_2}^{g_2}$ such that $A_{v_2}^{g_2}P_{d'_1}(A_{v_2}^{g_2})^{-1} = 
 P_{d''_1}$ where $d''_1(l_1) = d_2(l_1)$ y $d''_1(l_2) = d_2(l_2)$. Thus $$A_{v_2}^{g_2}A_{v_1}^{g_1}P_{d_1}(A_{v_1}^{g_1})^{-1}(A_{v_2}^{g_2})^{-1} = P_{d''_1}.$$
Continuing this process for all edges in $T$ that are adjacent to $l_1$. Then, we do the same for the edges in $T$ adjacent to $l_2$, excluding, of course, $l_1$. We continue this way until all the edges in $T$ have been considered. Thus, if $T$ contains $m$ edges, we find operators $A_{v_i}^{g_i}$, such that
 $$A_{v_m}^{g_m} \cdots A_{v_1}^{g_1}P_{d_1}(A_{v_1}^{g_1})^{-1} \cdots(A_{v_m}^{g_m})^{-1} = P_{d_3}.$$ The operator $P_{d_3}$ satisfies $d_3(l_i) = d_2(l_i)$ for all $l_i \in T $. Now, for edges $l \in \Lambda \setminus T$, by adding $l$ to $T$, we form a cycle in $\Lambda$. We know that for all edges of this cycle except $l$, $P_{d_3}$ is equal to $P_{d_2}$. Furthermore, since both are flat, it means that $P_{d_2}^{d_2(l)}$ and $P_{d_3}^{d_3(l)}$ are totally determined by the other edges of the cycle. Thus, both must coincide, and hence $P_{d_3} = P_{d_2}$. Since $p_{\Delta}$ absorbs every $A_v^{g}$, then
\begin{align*}
p_{\Delta} P_{d_1} p_{\Delta} =p_{\Delta} A_{v_m}^{g_m} \cdots A_{v_1}^{g_1}P_{d_1}(A_{v_1}^{g_1})^{-1} \cdots(A_{v_m}^{g_m})^{-1}p_{\Delta} =p_{\Delta} P_{d_2} p_{\Delta}.
\end{align*}

Step 3: Finally, we prove that $p_{\Delta}x
p_{\Delta}  = ap_{\Delta}$ for some $a \in \C$. Considering all flat functions $d:\Lambda \to G$, we have $$\sum_{d \text{ flat}} P_d = \prod_{f \subset \Lambda} B_f,$$ so $p_{\Delta}( \sum_{d \text{ flat}} P_d )p_{\Delta}=p_{\Delta}$. Thus, by steps 1 and 2, we can conclude that $$p_{\Delta}x
p_{\Delta} = a_{c_1} p_{\Delta}  P_{c_2} p_{\Delta} = \frac{a_{c_1}}{\text{number of flat functions on } \Lambda }p_{\Delta}.$$
\end{proof}

\begin{corollary}
    The Twisted Quantum Double model is a quantum error correcting code (QECC).
\end{corollary}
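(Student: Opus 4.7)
The plan is to invoke the Knill--Laflamme characterization of quantum error-correcting codes and reduce it directly to Theorem~\ref{teoLTO1} (LTO1). Recall that a subspace $\mathcal{C} \subset \mathcal{H}_{\text{tot}}$ with projector $P$ defines a QECC correcting a set of error operators $\{E_a\}$ if and only if $P E_a^\dagger E_b P = c_{ab}\, P$ for some scalars $c_{ab}\in\mathbb{C}$. I will take the code subspace to be the ground state space $V_{gs}$ of the Twisted Quantum Double Hamiltonian on the whole surface $\Sigma$, whose projector is $P := \prod_{v\in V(\mathcal{L})} A_v \prod_{f\in F(\mathcal{L})} B_f$. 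The errors to be corrected are all operators supported on sufficiently small regions, made precise below.

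First, I would identify the ambient algebra $\mathfrak{A}$ with $\bigotimes_{\ell \in E(\mathcal{L})} M_{|G|}(\mathbb{C})$ and observe that $P = p_\Sigma$ in the notation of Theorem~\ref{LTOTEO}. For any region $\Lambda$ such that $\Lambda \ll \Sigma$ (i.e. every closed star of a vertex of $\Lambda$ lies in the interior of the lattice, with a buffer of edges separating $\Lambda$ from infinity or from the rest of the surface), Theorem~\ref{teoLTO1} gives $p_\Sigma\, x\, p_\Sigma = \lambda(x)\, p_\Sigma$ for every $x \in \mathfrak{A}(\Lambda)$, where $\lambda:\mathfrak{A}(\Lambda)\to \mathbb{C}$ is a linear functional. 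Given two errors $E_a, E_b \in \mathfrak{A}(\Lambda)$, their product $E_a^\dagger E_b$ still lies in $\mathfrak{A}(\Lambda)$, so
\[
P\, E_a^\dagger E_b\, P \;=\; p_\Sigma\, (E_a^\dagger E_b)\, p_\Sigma \;=\; \lambda(E_a^\dagger E_b)\, p_\Sigma \;=\; c_{ab}\, P,
\]
with $c_{ab} := \lambda(E_a^\dagger E_b) \in \mathbb{C}$. This is precisely the Knill--Laflamme condition, so $V_{gs}$ is a QECC correcting any set of errors whose span is contained in some $\mathfrak{A}(\Lambda)$ with $\Lambda \ll \Sigma$.

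The second step is to make the statement uniform: the code distance is controlled by the smallest region $\Lambda$ whose complement fails to completely surround it. Since $\mathfrak{A}(\Lambda)$ contains every operator supported on the edges of $\Lambda$, the code corrects arbitrary errors of weight at most the maximal radius $r$ such that every ball of radius $r$ in $\mathcal{L}$ admits a completely surrounding region in $\Sigma$; for closed surfaces this $r$ scales with the systole of $\Sigma$ relative to $\mathcal{L}$, so the code has macroscopic distance when the lattice is refined. I do not expect any real obstacle here: LTO1 has already been established in Theorem~\ref{teoLTO1}, and the remaining content is a verbatim translation between the LTO language and the Knill--Laflamme formalism. The only subtlety is to state carefully for which class of errors the correction holds, namely those supported on a region completely surrounded by the surface, and to note that this conclusion is independent of the 3-cocycle $\alpha$ since it uses only the structural property LTO1 and not any explicit cocycle computation.
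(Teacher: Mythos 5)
Your proposal is correct and follows the same route as the paper: the paper's proof is the one-line observation that LTO1 is equivalent to the Knill--Laflamme conditions, and you simply spell out this equivalence by taking the code projector to be $p_\Sigma$ and noting that $p_\Sigma E_a^\dagger E_b p_\Sigma = c_{ab}\, p_\Sigma$ follows from Theorem~\ref{teoLTO1} applied to $E_a^\dagger E_b \in \mathfrak{A}(\Lambda)$ with $\Lambda$ completely surrounded. Your additional remarks on code distance go beyond what the paper claims but are consistent with it.
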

\begin{proof}
    The LTO1 condition is equivalent to the necessary and sufficient conditions given in \cite{knill-laflamme} for a code to be a QECC.
\end{proof}

 To prove LTO2, we will use some additional algebras. For a smooth interval  $I$ of $\partial \Lambda$, we will define the algebra $\mathfrak{D}(I)$. This algebra will incorporate certain operators that can be considered as partial vertex and face operators.
 
 \begin{definition}[Operators $C_{v}^g$] \label{defstarparcial}
     Let $\Lambda$ be a region and $g \in G$. Consider a vertex $v$ such that $ \overline{s(v)} \not\subset \Lambda$, yet $s(v) \cap \Lambda \neq \emptyset$. The operator $C_{v}^g$ acts on the edges in $s(v) \cap \Lambda$. As shown on the right side of Figure \ref{algebraD}, we illustrate an example of edges affected by the action of $C_{v}^g$. The action of $C_{v}^g$ on an edge of $s(v) \cap \Lambda$ is a left multiplication by $g$ if the edge is pointed away from $v$, and a right multiplication by $g^{-1}$ if it points to $v$.
     
     Similarly to $A_v^g$, the operator $C_{v}^g$ introduces a phase. This phase is given by the restriction of the function $\alpha_v^g$ \ref{starfase}, to the region $\Lambda$. To properly define this restriction, we must first recall that, for an arbitrary lattice $\mathcal{L}$, the function $\alpha_v^g$ was originally defined on its canonical triangular lattice $\mathcal{L}'$, and each face $f' \in F(\mathcal{L}')$ containing the vertex $v$ contributed to $\alpha_v^g$. When restricting to $\Lambda$, we only consider the faces $f' \subset F(\mathcal{L}')$ for which there exists a face $f \in F(\mathcal{L})$, such that $f' \subset f \subset \Lambda$. Thus, for a given coloring \(\varphi\) of \(\Lambda\), we have that
\[
\alpha_v^g|_\Lambda(\varphi) := \prod_{\substack{f' \in \nabla_v \\ f' \subset f \subset \Lambda}} \zeta(f')^{\epsilon(f')}.
\]
Note that the only difference from the function \ref{starfase} lies in the faces over which the product is taken. In Figure \ref{algebraD}, the faces that contribute to the calculation of $\alpha_{v}^g|_\Lambda$ are highlighted.
 \end{definition}
 Let $\Lambda$ be a region, and  $I$ a smooth  interval of $\partial \Lambda$. On the left of Figure \ref{algebraD}, a portion of the region $\Lambda$ is depicted, highlighting the smooth interval $I$ of $\partial \Lambda$. We will now consider two sets of operators that generate the algebra $\mathfrak{D}(I)$. Firstly, for each $g \in G$ and every edge $l \in I$, we take the projection operator $P_l^g$. An example of one such operator can be seen on the right of Figure \ref{algebraD}. Secondly, for each $g \in G$ and vertex $v$ incident to two edges of $I$, we consider the operator $C_{v}^g$. Thus the  algebra $\mathfrak{D}(I)$ is given by:
\begin{align}
    \mathfrak{D}(I) = \C^* \{ P_l^{g_1}, C_{v}^{g_2} \mid g_1,g_2 \in G, l \in I, v \text{ incident to two edges of } I \}
\end{align}
 
 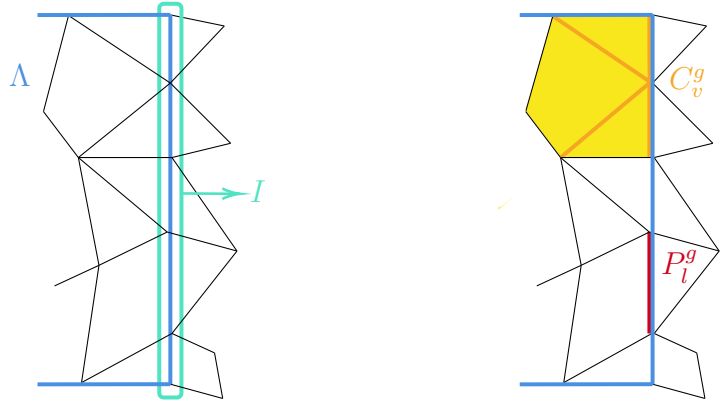
\begin{figure}[h!]
     \centering
\begin{tikzpicture}[x=0.6pt,y=0.6pt,yscale=-1,xscale=1]

\draw  [color={rgb, 255:red, 255; green, 255; blue, 255 }  ,draw opacity=1 ][fill={rgb, 255:red, 248; green, 231; blue, 28 }  ,fill opacity=0.29 ] (311.2,134.52) -- (345.82,11.07) -- (407.91,52.64) -- cycle ;
\draw    (41,11) -- (25.2,71.6) ;
\draw    (47.2,100.6) -- (60.2,168.6) ;
\draw    (49.2,242.6) -- (60.2,168.6) ;
\draw    (25.2,71.6) -- (47.2,100.6) ;
\draw    (60.2,168.6) -- (32.2,181.6) ;
\draw    (105.2,10.6) -- (139.2,17.6) ;
\draw [color={rgb, 255:red, 74; green, 144; blue, 226 }  ,draw opacity=1 ][line width=1.5]    (105.2,10.6) -- (105.2,243.6) ;
\draw    (408.7,211.65) -- (353.2,242.6) ;
\draw    (353.2,242.6) -- (364.2,168.6) ;
\draw    (364.2,168.6) -- (336.2,181.6) ;
\draw    (409.2,53.6) -- (443.2,17.6) ;
\draw    (409.2,53.6) -- (447.2,91.6) ;
\draw    (410.2,100.6) -- (451.2,159.6) ;
\draw    (410.2,211.6) -- (451.2,159.6) ;
\draw    (410.2,211.6) -- (437.05,223.85) ;
\draw    (409.2,243.6) -- (442.2,252.6) ;
\draw    (442.2,252.6) -- (437.05,223.85) ;
\draw    (409.2,10.6) -- (443.2,17.6) ;
\draw    (410.2,100.6) -- (447.2,91.6) ;
\draw [color={rgb, 255:red, 74; green, 144; blue, 226 }  ,draw opacity=1 ][line width=1.5]    (325.45,243.7) -- (409.2,243.6) ;
\draw [color={rgb, 255:red, 208; green, 2; blue, 27 }  ,draw opacity=1 ][line width=1.5]    (407.2,147.6) -- (407.2,211.6) ;
\draw    (407.2,147.6) -- (364.2,168.6) ;
\draw    (451.2,159.6) -- (407.2,147.6) ;
\draw  [color={rgb, 255:red, 80; green, 227; blue, 194 }  ,draw opacity=1 ][line width=1.5]  (98.2,6.4) .. controls (98.2,4.85) and (99.45,3.6) .. (101,3.6) -- (109.4,3.6) .. controls (110.95,3.6) and (112.2,4.85) .. (112.2,6.4) -- (112.2,249.2) .. controls (112.2,250.75) and (110.95,252) .. (109.4,252) -- (101,252) .. controls (99.45,252) and (98.2,250.75) .. (98.2,249.2) -- cycle ;
\draw    (105.2,53.6) -- (139.2,17.6) ;
\draw    (41,11) -- (105.2,53.6) ;
\draw    (105.2,53.6) -- (47.2,100.6) ;
\draw    (105.2,53.6) -- (143.2,91.6) ;
\draw    (106.2,100.6) -- (143.2,91.6) ;
\draw    (47.2,100.6) -- (106.2,100.6) ;
\draw    (103.2,147.6) -- (47.2,100.6) ;
\draw    (103.2,147.6) -- (60.2,168.6) ;
\draw    (147.2,159.6) -- (103.2,147.6) ;
\draw    (106.2,100.6) -- (147.2,159.6) ;
\draw [color={rgb, 255:red, 80; green, 227; blue, 194 }  ,draw opacity=1 ][line width=1]   (112.5,123.25) -- (146.9,123.49) ;
\draw [shift={(148.9,123.5)}, rotate = 180.39] [color={rgb, 255:red, 80; green, 227; blue, 194 }  ,draw opacity=1 ][line width=1]    (10.93,-3.29) .. controls (6.95,-1.4) and (3.31,-0.3) .. (0,0) .. controls (3.31,0.3) and (6.95,1.4) .. (10.93,3.29)   ;
\draw    (106.2,211.6) -- (147.2,159.6) ;
\draw    (106.2,211.6) -- (49.2,242.6) ;
\draw    (106.2,211.6) -- (133.05,223.85) ;
\draw    (138.2,252.6) -- (133.05,223.85) ;
\draw    (105.2,243.6) -- (138.2,252.6) ;
\draw [color={rgb, 255:red, 74; green, 144; blue, 226 }  ,draw opacity=1 ][line width=1.5]    (21.45,243.7) -- (105.2,243.6) ;
\draw [color={rgb, 255:red, 74; green, 144; blue, 226 }  ,draw opacity=1 ][line width=1.5]    (21.45,10.7) -- (105.2,10.6) ;
\draw  [color={rgb, 255:red, 255; green, 255; blue, 255 }  ,draw opacity=1 ][fill={rgb, 255:red, 248; green, 231; blue, 28 }  ,fill opacity=0.29 ] (406.33,52.68) -- (344.13,10.08) -- (406.99,11.08) -- cycle ;
\draw  [color={rgb, 255:red, 255; green, 255; blue, 255 }  ,draw opacity=1 ][fill={rgb, 255:red, 248; green, 231; blue, 28 }  ,fill opacity=0.29 ] (351.18,100.63) -- (407.18,53.63) -- (407.2,100.6) -- cycle ;
\draw [color={rgb, 255:red, 245; green, 166; blue, 35 }  ,draw opacity=1 ][line width=1.5]    (407.2,53.6) -- (351.2,100.6) ;
\draw [color={rgb, 255:red, 245; green, 166; blue, 35 }  ,draw opacity=1 ][line width=1.5]    (407.2,53.6) -- (407.2,100.6) ;
\draw [color={rgb, 255:red, 245; green, 166; blue, 35 }  ,draw opacity=1 ][line width=1.5]    (407.2,10.6) -- (407.2,53.6) ;
\draw    (351.2,100.6) -- (410.2,100.6) ;
\draw [color={rgb, 255:red, 245; green, 166; blue, 35 }  ,draw opacity=1 ][line width=1.5]    (346.3,11.35) -- (407.33,52.68) ;
\draw    (346.3,11.35) -- (329.2,71.6) ;
\draw [color={rgb, 255:red, 74; green, 144; blue, 226 }  ,draw opacity=1 ][line width=1.5]    (325.45,10.7) -- (409.2,10.6) ;
\draw [color={rgb, 255:red, 74; green, 144; blue, 226 }  ,draw opacity=1 ][line width=1.5]    (409.2,10.6) -- (409.2,243.6) ;
\draw  [color={rgb, 255:red, 255; green, 255; blue, 255 }  ,draw opacity=1 ][fill={rgb, 255:red, 255; green, 255; blue, 255 }  ,fill opacity=1 ] (327.84,69.88) -- (352.16,101.93) -- (312.97,131.68) -- (288.64,99.63) -- cycle ;
\draw    (351.2,100.6) -- (364.2,168.6) ;
\draw    (407.18,147.63) -- (351.18,100.63) ;
\draw    (329.2,71.6) -- (351.2,100.6) ;
\draw (2,40) node [anchor=north west][inner sep=0.75pt]  [color={rgb, 255:red, 74; green, 144; blue, 226 }  ,opacity=1 ]  {$\Lambda $};
\draw (153.75,113.6) node [anchor=north west][inner sep=0.75pt]  [color={rgb, 255:red, 80; green, 227; blue, 194}  ,opacity=1 ]  {$I$};
\draw (418,42.4) node [anchor=north west][inner sep=0.75pt]  [color={rgb, 255:red, 245; green, 166; blue, 35 }  ,opacity=1 ]  {$C_{v}^{g}$};
\draw (413,157.4) node [anchor=north west][inner sep=0.75pt]  [color={rgb, 255:red, 208; green, 2; blue, 27 }  ,opacity=1 ]  {$P_{l}^{g}$};
\end{tikzpicture}
     \caption{On the left, we see a smooth interval \(I\) of \(\Lambda\). On the right, the edges on which the operators \(C_{v}^g\) and \(P_{l}^g\) act are indicated.}
     \label{algebraD}
 \end{figure}
 
We now have everything we need to prove that the Twisted Quantum Double model satisfies the LTO2 condition.
\begin{theorem} \label{teoLTO2}
    The Twisted Quantum Double model for a finite group $G$, and for regions with smooth boundaries, satisfies the LTO2, that is,  if $\Lambda \Subset \Delta$, then $p_{\Delta} \mathfrak{A}(\Lambda) p_{\Delta}=\mathfrak{B}\left(\Lambda \Subset \Delta\right) p_{\Delta}$.
\end{theorem}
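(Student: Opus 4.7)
The plan is to establish the two inclusions in $p_\Delta \mathfrak{A}(\Lambda) p_\Delta = \mathfrak{B}(\Lambda \Subset \Delta) p_\Delta$ separately. For the inclusion $\mathfrak{B}(\Lambda \Subset \Delta) p_\Delta \subseteq p_\Delta \mathfrak{A}(\Lambda) p_\Delta$, any element of $\mathfrak{B}(\Lambda \Subset \Delta)$ has the form $y p_\Delta$ with $y \in p_\Lambda \mathfrak{A}(\Lambda) p_\Lambda$ satisfying in particular $[y, p_\Delta] = 0$ (taking $\Delta'=\Delta$ in the defining commutation condition). Since $\Lambda \subset \Delta$ and both projectors are products of commuting elementary projectors $A_v$ and $B_f$ in the twisted model, one has $p_\Lambda p_\Delta = p_\Delta$, so $y p_\Delta = p_\Delta y p_\Delta \in p_\Delta \mathfrak{A}(\Lambda) p_\Delta$.

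For the reverse inclusion the strategy is to mimic the three-step argument of Theorem \ref{teoLTO1}, weakened to accommodate the surrounding hypothesis $\Lambda \Subset \Delta$. I would expand an arbitrary $x \in \mathfrak{A}(\Lambda)$ in the basis $\{L_{c_1} P_{c_2}\}$ and analyse each term with $p_\Delta L_{c_1} P_{c_2} p_\Delta \neq 0$. The flatness of $P_{c_2}$ follows exactly as in Step~1 of Theorem~\ref{teoLTO1}. Next, I would run the recursive construction of star operators $A_{v_i}^{g_i}$ from that step, but only over vertices $v$ with $\overline{s(v)} \subset \Lambda$; these are absorbed on both sides by $p_\Delta$ because $s(v) \subset \Delta$. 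Combining this with Proposition~\ref{proigualdad} applied to every edge of $\Lambda$ lying on a cycle of $\Delta$ with no other edges in $\Lambda$, the hypothesis $\Lambda \Subset \Delta$ guarantees that the surviving $L$-content is supported only on edges of $I$ and on edges incident to vertices of $I$. The result is an identity
\[
p_\Delta L_{c_1} P_{c_2} p_\Delta = a_{c_1} \, p_\Delta L_{c_3} P_{c_2} p_\Delta,
\]
with $a_{c_1} \in \mathbb{C}$ collecting the $3$-cocycle phases produced by the star operators, and $L_{c_3}$ supported in a neighbourhood of $I$.

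To finish, the plan is to exhibit $a_{c_1} L_{c_3} P_{c_2}$ as a product (up to a scalar) of generators of $\mathfrak{D}(I)$: the $P_l^g$ factors coming from $P_{c_2}$ on edges of $I$, together with partial star operators $C_v^g$ at vertices $v \in I$ reproducing the $L$-content of $L_{c_3}$ on the edges of $\Lambda$ incident to $I$. Calling the resulting operator $y$, I set $\widetilde{y} := p_\Lambda y p_\Lambda \in p_\Lambda \mathfrak{A}(\Lambda) p_\Lambda$; using $p_\Lambda p_\Delta = p_\Delta$ one verifies $\widetilde{y} p_\Delta = p_\Delta L_{c_1} P_{c_2} p_\Delta$. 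The commutation $[\widetilde{y}, p_{\Delta'}] = 0$ for every $\Delta'$ with $\Lambda \Subset \Delta'$ and $\partial \Lambda \cap \partial \Delta' = I$ then follows because the $P_l^g$ generators of $\mathfrak{D}(I)$ act only on $I$-edges (which the projectors of $\Delta' \setminus \Lambda$ leave alone), while the partial stars $C_v^g$ at $I$-vertices combine with the twisted vertex operators of $\Delta'$ on the complementary side via the $3$-cocycle identity satisfied by $\alpha$.

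The main obstacle will be the cocycle bookkeeping in the last step. Unlike the untwisted Kitaev case treated in \cite{chuah2024boundary}, each $A_v^g$ and $C_v^g$ carries a phase $\alpha_v^g(\varphi)$ determined by the colouring of the closed star and the vertex ordering. Showing that the reduced operator genuinely decomposes into $\mathfrak{D}(I)$-generators, and that $\widetilde{y}$ really commutes with every eligible $p_{\Delta'}$, amounts to splitting the face factors $\zeta(f)^{\epsilon(f)}$ between faces entirely inside $\Lambda$ and faces straddling $I$, and invoking the $2$-cocycle identity for the functions $\beta_x$ of Definition~\ref{def:beta} to ensure that the contributions from outside $\Lambda$ cancel. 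This cocycle reorganisation is exactly what the restriction $\alpha_v^g|_\Lambda$ of Definition~\ref{defstarparcial} is engineered to produce, and verifying that it assembles coherently along $I$ is where the bulk of the argument lies.
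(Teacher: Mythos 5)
Your proposal is correct and follows essentially the same route as the paper: the easy inclusion from the definition of $\mathfrak{B}(\Lambda \Subset \Delta)$ together with $p_\Lambda p_\Delta = p_\Delta$, and the hard inclusion by rerunning the three-step reduction of Theorem~\ref{teoLTO1} with star operators supported in $\Lambda$, Proposition~\ref{proigualdad}, the partial star operators $C_v^g$ along $I$, and the averaging over flat colorings agreeing on $I$ that converts $P_{c_2}$ into the boundary projections $P_l^{c_2(l)}$. The only structural difference is that the paper packages the boundary generators $P_l^g$ and $C_v^g$ into an explicitly named algebra $\mathfrak{D}(I)$ and proves the chain $\mathfrak{D}(I)p_\Delta \subseteq \mathfrak{B}(\Lambda \Subset \Delta) \subseteq p_\Delta \mathfrak{A}(\Lambda) p_\Delta \subseteq \mathfrak{D}(I)p_\Delta$ (which it then reuses for LTO3), asserting — just as you defer — the cocycle-dependent commutation of these generators with the eligible projectors $p_{\Delta'}$.
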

\begin{proof}
 Let \(I = \partial \Lambda \cap \partial \Delta\). We will show that \(p_{\Delta} \mathfrak{A}(\Lambda) p_{\Delta} = \mathfrak{D}(I) p_{\Delta}\). To do this, we will demonstrate the following inclusions:
\begin{align} \label{contenencia1}
    \mathfrak{D}(I) p_{\Delta} \subseteq \mathfrak{B}\left(\Lambda \Subset \Delta\right) \subseteq p_{\Delta} \mathfrak{A}\left(\Lambda\right) p_{\Delta}.
\end{align}
To see that \(\mathfrak{D}(I) p_{\Delta} \subseteq \mathfrak{B}\left(\Lambda \Subset \Delta\right)\), take \(x \in \mathfrak{D}(I)\) and consider \(xp_{\Lambda} \in \mathfrak{D}(I) p_{\Lambda}\). By the definition of the algebra \(\mathfrak{D}(I)\), every operator in this algebra commutes with the face and star operators. Therefore, \(xp_{\Lambda} = xp_{\Lambda}p_{\Lambda} = p_{\Lambda} xp_{\Lambda} \in p_{\Lambda} \mathfrak{A}(\Lambda) p_{\Lambda}\). Moreover, for any region \(\Delta'\) such that \(\Lambda \Subset \Delta'\) with \(\partial \Lambda \cap \partial \Delta' = \partial \Lambda \cap \partial \Delta\), it holds that \(xp_{\Lambda}p_{\Delta'} = p_{\Delta'} xp_{\Lambda}\). Hence, \(xp_{\Lambda}p_{\Delta} = xp_{\Delta} \in \mathfrak{B}\left(\Lambda \Subset \Delta\right)\). Now, let's show that \(\mathfrak{B}\left(\Lambda \Subset \Delta\right) \subseteq p_{\Delta} \mathfrak{A}\left(\Lambda\right) p_{\Delta}\). Let \(xp_{\Delta} \in \mathfrak{B}\left(\Lambda \Subset \Delta\right)\). By definition, \(x \in p_{\Lambda} \mathfrak{A}(\Lambda) p_{\Lambda}\) and \(xp_{\Delta} = p_{\Delta} x\). Then, \(xp_{\Delta} = p_{\Delta} xp_{\Delta} = p_{\Delta} p_{\Lambda} xp_{\Lambda} p_{\Delta}\), where clearly \(p_{\Lambda} xp_{\Lambda} \in \mathfrak{A}(\Lambda)\). Thus, \(xp_{\Delta} \in p_{\Delta} \mathfrak{A}(\Lambda) p_{\Delta}\).

The next two steps aim to show that \(p_{\Delta} \mathfrak{A}(\Lambda) p_{\Delta} \subseteq \mathfrak{D}(I) p_{\Delta}\). These steps are analogous to the first two steps in the proof of Theorem \ref{teoLTO1}. Let \(x \in \mathfrak{A}(\Lambda)\), and suppose \(x = L_{c_1} P_{c_2}\) for some \(c_1, c_2: \Lambda \rightarrow G\).\\

Step 1: Let us see that if $p_{\Delta} x p_{\Delta} \neq 0$, then
$$
p_{\Delta} x p_{\Delta}=b_{c_1}\prod_{i=1}^n C_{u_i}^{g_i} p_{\Delta} P_{c_2} p_{\Delta}
$$
for some $b_{c_1} \in \C$, and operators $C_{u_i}^{g_i}$. To establish this, we aim to identify star operators \( A_{v_i}^{g_i} \), where \( s(v_i) \subset \Lambda \), such that \(
A_{v_n}^{g_n} \cdots A_{v_1}^{g_1} L_{c_1} P_{c_2} = a_{c_1} L_{c_3} P_{c_2}\), with \( L_{l}^{c_3(l)} = \Id \) for all edges in \( \Lambda \) that are not incident to any vertex in \( I \). First, for each edge \( l \in \partial \Lambda \setminus I \), we have \(L_{l}^{c_1(l)} = \operatorname{Id}\). Next, in order to select the appropriate \( A_{v_i}^{g_i} \)'s, we define the following sets:
    \begin{align*}
    \Lambda'_1 & =  \{ l \in \Lambda\mid \text{there is a star } s \subset \Lambda  \text{ and } l \in s \}, \\
    E'_1 & = \{ l \in \Lambda_1\mid \text{there is a star } s \not\subset \Lambda_1, \overline{s} \subset \Delta, \text{ and } l \in s\},\\
     F'_1 & = \{ s \subset \Lambda_1\mid \text{there is an edge } l \in s \text{ such that } l \in E'_1 \}.
     \end{align*}
Consider the recursive formula. For each $j$, we define
\begin{align*}
    \Lambda'_j  & = \{ l \in \Lambda_{j-1} \setminus E_{j-1}\mid \text{there is a star } s \subset \Lambda_{j-1} \setminus E'_{j-1}  \text{ and } l \in s \},\\
    E'_j & = \{ l \in \Lambda_j\mid \text{there is a star } s \not\subset \Lambda_j, \overline{s} \subset \Delta,  \text{ and } l \in s \},\\   
  F'_j & = \{ s\subset \Lambda_j\mid \text{there is an edge } l \in s \text{ such that } l \in E'_j \}.
\end{align*}
Note that the only difference between these sets and those defined in the proof of Theorem \ref{teoLTO1} lies in the definition of the sets $E'_j$. As before, the edges in $E'_j$ satisfy the condition that one of their stars is not contained in $\Lambda'_j$. However, an additional condition is now imposed: the star-closed of the corresponding star must be contained in $\Delta$. Previously, this requirement was unnecessary since $\Lambda$ was completely surrounded by $\Delta$. This extra condition prevents $E'_j$ from containing edges incident to any vertex in $I$.

To choose the operators $A_{v_i}^{g_i}$, we proceed as in Theorem \ref{teoLTO1}, but using the sets $E'_j$ and $F'_j$. Specifically, for the set $F'_1 = \{s_1, \dots, s_{m_1} \}$, we select vertices $v_i$ such that $s(v_i) = s_i$, and define the operators $A_{v_i}^{g_i}$, where each one depends on the choice of an edge $l_i \in s_i \cap E'_1$. We continue this process sequentially for each $F'_j$ until we reach $F_j = \emptyset$, at which point we stop. In this way, we obtain operators $A_{v_i}^{g_i}$ such that
$$A_{v_1}^{g_1} \cdots A_{v_m}^{g_m} L_{c_1} P_{c_2} = a_{c_1} L_{c_3} P_{c_2},$$
where $a_{c_1} \in \mathbb{C}$. As was shown in Theorem \ref{teoLTO1}, it can be demonstrated that $L_{l}^{c_3(l)} = \operatorname{Id}$ for any edge $l$ that is not incident to a vertex in $I$. Thus, $L_{c_3}$ is supported on $I$ and on the edges incident to it.

Now, take the edges \(\{ l_1, \dots, l_{n+1} \}\) of \(I\), ordered in such a way that if we travel around \(\partial \Lambda\) in the counterclockwise, we meet these edges in that order. Let \(u_i\) be the vertex connecting the edges \(l_i\) and \(l_{i+1}\). Since \((L_{c_3}P_{c_2})|_{l_1} = L_{l_1}^{h_1}P_{l_1}^{k_1}\) for some \(h_1, k_1 \in G\), we define \(C_{u_1}^{g_1}\) as follows: if \(l_1\) points away from \(u_1\), we take \(g_1 = h_1\); if \(l_1\) points towards \(u_1\), we take \(g_1 = k_1^{-1}h_1^{-1}k_1\). In both cases, \(C_{u_1}^{g_1}(P_{c_2}) = b_1 L_{c'_3}P_{c_2}\), where \((L_{c'_3}P_{c_2})|_{l_1} = (L_{c_3}P_{c_2})|_{l_1}\), and \(b_1 = \alpha_{u_1}^{g_1}|_\Lambda(\ket{c_2})\). Next, we consider vertex \(u_2\). As we did for $u_1$, we define an operator \(C_{u_2}^{g_2}\) such that \(C_{u_2}^{g_2}(b_1L_{c'_3}P_{c_2}) = b_2 L_{c''_3}P_{c_2}\), where \((L_{c''_3}P_{c_2})|_{l_2} = (L_{c_3}P_{c_2})|_{l_2}\). Thus,
$$
C_{u_2}^{g_2}C_{u_1}^{g_1}(P_{c_2}) = b_2 L_{c''_3}P_{c_2},
$$
where \(b_2 = b_1 \alpha_{u_2}^{g_2}|_\Lambda(L_{c'_3} \ket{c_2})\). We continue in this manner until we have covered all the vertices \(u_i\) for \(1 \le i \le n\). In this way, we obtain
$$
C_{u_1}^{g_1} \cdots C_{u_n}^{g_n}(P_{c_2}) = b_{c_3} L_{c_4} P_{c_2},
$$
where \(b_{c_3} \in \mathbb{C}\) depends on the operators \(C_{u_i}^{g_i}\). Using Proposition \ref{proigualdad}, it can be shown that \(L_{c_4} = L_{c_3}\), and hence \(C_{u_1}^{g_1} \cdots C_{u_n}^{g_n}(P_{c_2}) = b_{c_3} L_{c_3} P_{c_2}\). Then
\begin{align*}
p_{\Delta} L_{c_1} P_{c_2} p_{\Delta} & =p_{\Delta} A_{v_1}^{g_1} \cdots A_{v_m}^{g_m} L_{c_1} P_{c_2} p_{\Delta} \\
& =p_{\Delta} a_{c_1} L_{c_3}P_{c_2} p_{\Delta}\\
& =  p_{\Delta} b_{c_1}\prod_{i=1}^n C_{u_i}^{g_i}  P_{c_2} p_{\Delta}  \quad \quad \quad \quad \text{ where } b_{c_1} = \frac{a_{c_1}}{b_{c_3}}\\
& = b_{c_1}\prod_{i=1}^n C_{u_i}^{g_i} p_{\Delta}  P_{c_2} p_{\Delta}.
\end{align*} 
Furthermore, since $p_{\Delta} L_{c_1}P_{c_2} p_{\Delta} \neq 0$, then $p_{\Delta} P_{c_2} p_{\Delta} \neq 0$. Thus, by definition, $P_{c_2}$ is flat.\\

Step 2: Let us prove that whenever $d_1, d_2: \Lambda \to G$ are flat, and $d_1(l) = d_2(l)$ for all $l \in I$, then $p_{\Delta} P_{d_1} p_{\Delta} = p_{\Delta} P_{d_2} p_{\Delta}$. To do this, we must take a tree $T$ of $\Lambda$, such that $I \subset T$. Now consider the tree $T' = T \setminus I$. We will use this tree and a procedure similar to the one we used in the proof of Theorem \ref{teoLTO1}. In that proof, to define the operators $A_{v_i}^{g_i}$, we first took an edge from $T'$ that was incident to a single vertex in $T'$. However, in this case, the edges we will use to define the first operators $A_{v_i}^{g_i}$ will be those in $T'$ that are incident to $I$. Once we have defined the vertex operator corresponding to each of these, we will continue as described in Theorem \ref{teoLTO1}. This extra consideration is necessary to ensure that, when defining the operator $A_{v_i}^{g_i}$, we have $\overline{s(v_i)} \subset \Delta$. In this way, we can prove that if $T'$ contains $m$ edges, there exist operators $A_{v_i}^{g_i}$, such that
\[
p_{\Delta} P_{d_1} p_{\Delta} = p_{\Delta} A_{v_m}^{g_m} \cdots A_{v_1}^{g_1} P_{d_1} (A_{v_1}^{g_1})^{-1} \cdots (A_{v_m}^{g_m})^{-1} p_{\Delta} = p_{\Delta} P_{d_2} p_{\Delta}.
\]

Step 3: Finally, we prove that $p_{\Delta} x p_{\Delta} \in \mathfrak{D}(I) p_{\Delta}$. Let $Flat_{c_2} = \{ c: \Lambda \to G \mid c \text{ is flat, and } c(l) = c_2(l) \text{ for all } l \in I \}$. Note that
\[
\sum_{c \in Flat_{c_2}} P_c = \prod_{f \subset F(\Lambda)} B_f \prod_{l \in J} P_{l}^{c_2(l)},
\]
so $p_{\Delta} \left( \sum_{c \in Flat_{c_2}} P_c \right) p_{\Delta} = \left( \prod_{l \in I} P_{l}^{c_2(l)} \right) p_{\Delta}$. Thus, by steps 1 and 2, we can conclude that
\[
p_{\Delta} x p_{\Delta} = b_{c_1} \prod_{i=1}^n C_{u_i}^{g_i} p_{\Delta} P_{c_2} p_{\Delta} = \frac{b_{c_1}}{|Flat_{c_2}|} \prod_{i=1}^n C_{u_i}^{g_i} \prod_{l \in J} P_{l}^{c_2(l)} p_{\Delta}.\]
Thus, $p_{\Delta} x p_{\Delta} \in \mathfrak{D}(I) p_{\Delta}$, that is, $p_{\Delta} \mathfrak{A}(\Lambda) p_{\Delta} \subseteq \mathfrak{D}(I) p_{\Delta}$, which, together with the inclusions \ref{contenencia1}, concludes that $p_{\Delta} \mathfrak{A}(\Lambda) p_{\Delta} = \mathfrak{D}(I) p_{\Delta}$.
\end{proof}

\begin{theorem} \label{teoLTO3}
   The Twisted Quantum Double model for a finite group $G$, and for regions with smooth boundaries, satisfies LTO3. That is, whenever $\Lambda_1 \subset \Lambda_2 \Subset \Delta$ with $\partial \Lambda_1 \cap \partial \Delta = \partial \Lambda_2 \cap \partial \Delta$, we have $\mathfrak{B}\left(\Lambda_1 \Subset \Delta\right) = \mathfrak{B}\left(\Lambda_2 \Subset \Delta\right)$.
\end{theorem}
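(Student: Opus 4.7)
The plan is to reduce LTO3 directly to the content of the proof of LTO2 (Theorem~\ref{teoLTO2}). The key observation is that the proof of LTO2 actually establishes more than its statement: it shows the chain of inclusions
\[
\mathfrak{D}(I)\, p_\Delta \;\subseteq\; \mathfrak{B}(\Lambda \Subset \Delta) \;\subseteq\; p_\Delta \mathfrak{A}(\Lambda) p_\Delta \;\subseteq\; \mathfrak{D}(I)\, p_\Delta,
\]
where $I = \partial \Lambda \cap \partial \Delta$. Hence $\mathfrak{B}(\Lambda \Subset \Delta) = \mathfrak{D}(I)\, p_\Delta$ depends on $\Lambda$ only through $I$.

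Given the hypothesis $\partial \Lambda_1 \cap \partial \Delta = \partial \Lambda_2 \cap \partial \Delta$, call this common smooth interval $I$. The first step is to verify that both $\Lambda_1 \Subset \Delta$ and $\Lambda_2 \Subset \Delta$ meet the hypotheses needed to invoke LTO2, which is immediate: $\Lambda_2 \Subset \Delta$ by assumption, and since $\Lambda_1 \subset \Lambda_2$ shares the same boundary interval with $\Delta$, every vertex $v \in \Lambda_1 \setminus I$ lies in $\Lambda_2 \setminus I$ and thus has $\overline{s(v)} \subset \Delta$, so $\Lambda_1 \Subset \Delta$ as well.

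Applying the identity $\mathfrak{B}(\Lambda_i \Subset \Delta) = \mathfrak{D}(I)\, p_\Delta$ twice, once for $i = 1$ and once for $i = 2$, we obtain
\[
\mathfrak{B}(\Lambda_1 \Subset \Delta) \;=\; \mathfrak{D}(I)\, p_\Delta \;=\; \mathfrak{B}(\Lambda_2 \Subset \Delta),
\]
which is the desired conclusion. The only subtlety, and the one point worth checking carefully, is that the algebra $\mathfrak{D}(I)$ truly depends only on the interval $I$ and not on the surrounding region: this is clear from Definition~\ref{defstarparcial} and the construction of $\mathfrak{D}(I)$, since the generators $P_l^g$ (for $l \in I$) and $C_v^g$ (for $v$ incident to two edges of $I$) are determined by $I$ alone, and the phase $\alpha_v^g|_{\Lambda_i}$ attached to $C_v^g$ involves only the faces of the canonical triangulation that lie in faces of $\mathcal{L}$ adjacent to $v$ on the interior side of $I$, all of which sit inside both $\Lambda_1$ and $\Lambda_2$ (in fact inside any region with boundary interval $I$ with $\Delta$). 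There is no substantive obstacle: LTO3 is a corollary of the explicit computation already carried out in the proof of LTO2, and the argument amounts to invoking that identification.
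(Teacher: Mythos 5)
Your proposal is correct and follows essentially the same route as the paper: the paper's proof of LTO3 likewise consists of observing that the proof of Theorem~\ref{teoLTO2} identifies $\mathfrak{B}(\Lambda_i \Subset \Delta)$ with $\mathfrak{D}(I)p_\Delta$, and that $\mathfrak{D}(I)$ depends only on the common interval $I$. Your extra checks (that $\Lambda_1 \Subset \Delta$ and that the generators and phases of $\mathfrak{D}(I)$ are determined by $I$ alone) are points the paper leaves implicit, but they do not change the argument.
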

\begin{proof}
    Let $I = \partial \Lambda_1 \cap \partial \Delta = \partial \Lambda_2 \cap \partial \Delta$. Note that the definition of the algebra $\mathfrak{D}(I)$ does not depend on the region $\Lambda_1$ or $\Lambda_2$. Thus, by the proof of Theorem \ref{teoLTO2}, we have
    \begin{align*}
        \mathfrak{B}\left(\Lambda_1 \Subset \Delta\right) = \mathfrak{D}(I) p_{\Delta} = \mathfrak{B}\left(\Lambda_2 \Subset \Delta\right).
    \end{align*}
\end{proof}

\begin{theorem}\label{teoLTO4}
    The Twisted Quantum Double model for a finite group $G$, and for regions with smooth boundaries, satisfies the LTO4, that is, whenever $\Lambda \Subset \Delta_1 \subset \Delta_2$ with $\partial 
    {\Lambda} \cap \partial 
    {\Delta_1}={\partial} \Lambda \cap {\partial \Delta_2}$, if $x \in \mathfrak{B}\left(\Lambda \Subset \Delta_1\right)$ with $x p_{\Delta_2}=0$, then $x=0$.
\end{theorem}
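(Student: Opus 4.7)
Using Theorem~\ref{teoLTO2}, any $x \in \mathfrak{B}(\Lambda \Subset \Delta_1)$ is of the form $y\,p_{\Delta_1}$ for some $y \in \mathfrak{D}(I)$, where $I = \partial\Lambda \cap \partial\Delta_1 = \partial\Lambda \cap \partial\Delta_2$. The factors defining $p_{\Delta_1}$ are commuting orthogonal projectors, each of which also appears in $p_{\Delta_2}$; consequently $p_{\Delta_1}\,p_{\Delta_2} = p_{\Delta_2}$, so $x\,p_{\Delta_2} = y\,p_{\Delta_2}$, and the hypothesis $x\,p_{\Delta_2}=0$ is equivalent to $y\,p_{\Delta_2}=0$. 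The task is to deduce $y\,p_{\Delta_1}=0$; equivalently, that the natural surjection $\pi\colon y\,p_{\Delta_1} \mapsto y\,p_{\Delta_2}$ from $\mathfrak{B}(\Lambda \Subset \Delta_1)$ to $\mathfrak{B}(\Lambda \Subset \Delta_2)$ is a bijection. This is the statement that the boundary algebra is stable under enlarging the outer region.

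A convenient intermediate observation is that $p_{\Delta_2} = P_{\mathrm{ext}}\,p_{\Delta_1}$, where $P_{\mathrm{ext}} = \prod_v A_v \prod_f B_f$ runs over $v$ with $s(v)\subset \Delta_2$, $s(v)\not\subset\Delta_1$ and over $f\subset \Delta_2$, $f\not\subset\Delta_1$. Using the smooth-boundary property $\Lambda \Subset \Delta_1$ together with the equality $\partial\Lambda \cap \partial\Delta_2 = I$, one checks that no such $v$ belongs to the vertex set of $\Lambda$: a vertex $v \in \Lambda\setminus I$ would satisfy $\overline{s(v)} \subset \Delta_1$, while $v \in I$ would satisfy $s(v) \not\subset \Delta_2$, each contradicting the defining conditions of $P_{\mathrm{ext}}$. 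A parallel argument rules out any face $f$ in $P_{\mathrm{ext}}$ containing an edge of $\Lambda$. Since the 3-cocycle phase $\alpha_v^g$ depends only on edges in $\overline{s(v)}$, the support of $P_{\mathrm{ext}}$ is disjoint from the support of $\mathfrak{D}(I)$, and so $P_{\mathrm{ext}}$ commutes with $y$.

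To conclude, I invoke the canonical normal form established in Steps~1--3 of the proof of Theorem~\ref{teoLTO2}: every element $y\,p_\Delta$ of $\mathfrak{B}(\Lambda \Subset \Delta)$ admits a unique expansion $\sum_{g,c}\lambda_{g,c}\,M_{g,c}\,p_\Delta$ in the monomials $M_{g,c} = \prod_{u \in I^\circ} C_u^{g_u}\prod_{l \in I} P_l^{c_l}$, indexed by $g\colon I^\circ \to G$ and by flat $G$-colorings $c$ of $I$ that extend to $\Lambda$. The reduction procedure producing this form only applies conjugations by full star operators $A_v$ with $s(v)\subset \Lambda$ together with the partial-star phase functions $\alpha_v^g|_{\Lambda}$ of Definition~\ref{defstarparcial}, which by construction sum only over faces of the canonical triangulation contained in $\Lambda$. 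Hence both the coefficients $\lambda_{g,c}$ and the linear dependences among the $M_{g,c}\,p_\Delta$ are determined entirely by data on $\Lambda \cup I$ and are independent of the outer region $\Delta$. It follows that $y\,p_{\Delta_1}=0$ iff every $\lambda_{g,c}=0$ iff $y\,p_{\Delta_2}=0$, completing the proof.

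The principal obstacle is the rigorous verification of this $\Delta$-independence: one must carefully track that every scalar produced by the reduction algorithm of Theorem~\ref{teoLTO2} arises from phase contributions $\alpha_v^g|_{\Lambda}$ or from counts of flat colorings of $\Lambda$, and is never affected by edges, vertices, or faces lying in $\Delta\setminus\Lambda$. The twist by the 3-cocycle $\alpha$ is what makes this delicate compared to the untwisted Kitaev case, since one must confirm that the partial-star phases never sample faces outside $\Lambda$; the smoothness hypothesis $\partial\Lambda \subset \Lambda$ together with the shared interval $I$ is what ultimately guarantees this locality.
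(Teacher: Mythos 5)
Your proposal has a genuine gap at its core. You reduce LTO4 to the claim that every element of $\mathfrak{B}(\Lambda \Subset \Delta)$ has a \emph{unique} expansion $\sum_{g,c}\lambda_{g,c}M_{g,c}\,p_{\Delta}$ with coefficients independent of $\Delta$, and you attribute this to Steps~1--3 of the proof of Theorem~\ref{teoLTO2}. But those steps only establish the \emph{spanning} statement $p_{\Delta}\mathfrak{A}(\Lambda)p_{\Delta}\subseteq\mathfrak{D}(I)p_{\Delta}$, i.e.\ that such an expansion exists; they say nothing about uniqueness. Uniqueness is exactly the linear independence of the family $\{M_{g,c}\,p_{\Delta_2}\}$, which is the entire content of LTO4: if some nontrivial combination $y=\sum\lambda_{g,c}M_{g,c}$ satisfied $y\,p_{\Delta_2}=0$, your ``normal form'' would not be unique. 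Asserting the normal form and then deducing LTO4 from it is therefore circular. Your closing paragraph, which names the verification as ``the principal obstacle'' without carrying it out, confirms that the decisive step is missing. (The preliminary reductions --- $x=y\,p_{\Delta_1}$ with $y\in\mathfrak{D}(I)$, $p_{\Delta_1}p_{\Delta_2}=p_{\Delta_2}$, the support/commutation remarks about $P_{\mathrm{ext}}$ --- are fine but do none of the real work.)

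What the paper actually does at this point is prove the linear independence directly. It applies $x\in\mathfrak{D}(I)$ to a flat coloring $\ket{c}$ of $\Delta_2$, writes $x\ket{c}=\sum_k a_k x_k\ket{c}$ with $x_k=\prod_j C_{v_j}^{g_{k,j}}\prod_i P_{l_i}^{h_i}$, and observes that the $x_k\ket{c}$ are linearly independent because distinct monomials alter the coloring on some edge of $I$. It then expands $p_{\Delta_2}=\prod_{\overline{s(v)}\subset\Delta_2}\frac{1}{|G|}\sum_g A_v^g$ (the $B_f$ act trivially on flat vectors), so that $p_{\Delta_2}x_k\ket{c}$ becomes a sum over tuples $(g_1,\dots,g_N)$ of vectors $A^{(g_N)}(x_k\ket{c})$; distinct tuples and distinct $k$ produce colorings differing on at least one edge (the $A_{v_i}^{g_i}$ do not touch $I$), so the whole family is linearly independent and $\sum_k a_k\,p_{\Delta_2}x_k\ket{c}=0$ forces $a_k=0$ for all $k$. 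Running this over all flat $c$ gives $x=0$. To repair your proof you would need to supply precisely this kind of argument; the $\Delta$-independence of the coefficients that you worry about is not actually the issue, since one only needs independence of $\{M_{g,c}\,p_{\Delta_2}\}$ for the larger region.
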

\begin{proof}
    Let \( c: \Delta_2 \to G \) be a flat coloring, and let \(\{ l_1, \dots, l_{n+1} \}\) denote the set of edges of \( I \), while \(\{ v_1, \dots, v_{n} \}\) denotes the set of internal vertices of \( I \), i.e., those vertices for which we have defined the operators \( C_{v_i} \). Given the colors assigned to the edges of \( I \), there exists a unique \((n+1)\)-tuple \((h_1, \dots, h_{n+1}) \in G^{n+1}\), with \( h_i = c(l_i) \), such that \( \prod_{i=1}^{n+1} P_{l_i}^{h_i} \ket{c} = \ket{c} \).
    
    Let \( x \in \mathfrak{D}(I) \) such that \( x p_{\Delta_2} = 0 \). Then \( 0 = x p_{\Delta_2} \ket{c} = p_{\Delta_2} x \ket{c} \), and we have \( x \ket{c} = (a_1 x_1 + \cdots + a_m x_m) \ket{c} \), where each \( x_k \) is of the form \( x_k = \prod_{j=1}^{n} C_{v_j}^{g_{k,j}} \prod_{i=1}^{n+1} P_{l_i}^{h_i} \) for some \( n \)-tuple \((g_{k,1}, \dots, g_{k,n}) \in G^n \). Note that \( \{ x_k \ket{c} \mid 1 \leq k \leq m \} \) forms a linearly independent set of flat vectors, since any pair of them must differ by at least one edge of \( I \). Thus,
    \begin{align*}
        0 = p_{\Delta_2} x \ket{c} = \sum_{k=1}^m a_k p_{\Delta_2} x_k \ket{c} = \sum_{k=1}^m a_k p_{\Delta_2} \left( \prod_{j=1}^{n} C_{v_j}^{g_{k,j}} \ket{c} \right)_.
    \end{align*}
    Now, considering \( p_{\Delta_2} x_k \ket{c} \), we have
    \begin{align*}
        0 = p_{\Delta_2} x_k \ket{c} = \prod_{\overline{s(v)} \subset \Delta_2} A_v (x_k \ket{c}) = \prod_{\overline{s(v)} \subset \Delta_2} \frac{1}{|G|} \sum_{g \in G} A_v^g (x_k \ket{c}).
    \end{align*}
    This results in a sum of flat vectors. If \( N \) denotes the number of vertices \( v \) such that \( \overline{s(v)} \subset \Delta_2 \), then the vectors in the sum \( p_{\Delta_2} x_k \ket{c} \) have the form \( \frac{1}{|G|^N} \prod_{i=1}^N A_{v_i}^{g_i} (x_k \ket{c}) \) for some \( N \)-tuple \((g_1, \dots, g_N) \in G^N \). Consequently, the vectors in \( p_{\Delta_2} x_k \ket{c} \) form a linearly independent set, since distinct \( N \)-tuples yield vectors differing on at least one edge of \( \Delta_2 \). Moreover, since the operators \( A_{v_i}^{g_i} \) do not affect the edges in \( I \), if \( l \neq k \), then the vectors in \( p_{\Delta_2} x_l \ket{c} \) and \( p_{\Delta_2} x_k \ket{c} \) are also linearly independent.

    Introducing the notation \( (g_N) := (g_1, \dots, g_N) \in G^N \) and \( A^{(g_N)} := \prod_{i=1}^N A_{v_i}^{g_i} \), we then have
    \begin{align*}
        0 = p_{\Delta_2} x \ket{c} = \sum_{k=1}^m a_k p_{\Delta_2} x_k \ket{c} = \sum_{k=1}^m \sum_{(g_N) \in G^N} \frac{a_k}{|G|^N} A^{(g_N)} (x_k \ket{c}).
    \end{align*}
    Since this is a sum of linearly independent vectors, it must hold that \( a_k = 0 \) for all \( 1 \leq k \leq m \). As this holds for any flat vector, we conclude that \( x = 0 \).
\end{proof}



\section{Local topological order for rough boundaries} \label{LTOROUGH}
In this section, we will examine the LTO axioms for the Twisted Quantum Double model in the context of rough intervals. However, in this case, certain conditions must be imposed on the boundary of the region. The first necessary change to satisfy the LTO axioms for rough boundaries is redefining the projectors $p_{\Lambda}$, which are now defined as follows:
\begin{align} \label{plambda2}
    p_{\Lambda} := \prod_{\overline{s(v)} \subset \Lambda} A_v \prod_{f \subset \Lambda} B_f.
\end{align}
The key difference here is that, due to the possible presence of rough edges, defining the vertex operator in the region $\Lambda$ requires that $\overline{s(v)} \subset \Lambda$, which is a more restrictive condition than $s(v) \subset \Lambda$.

\begin{remark}
    The condition $\overline{s(v)} \subset \Lambda$ can be simplified back to $s(v) \subset \Lambda$, even in the case of a rough boundary, in scenarios where $\alpha$ is the trivial \(3\)-cocycle, where the Twisted Quantum Double model reduces to Kitaev's model, or when the lattice is a triangular lattice.
\end{remark}

The definition of completely surrounded does not change in the smooth or rough case, so the proof for LTO1 is entirely analogous. However, the definition of surrounded does change depending on whether the interval is smooth or rough. For a rough boundary $I$ of $\partial \Lambda$, we will define the algebra $\mathfrak{C}(I)$.

For the algebra $\mathfrak{C}(I)$, we will introduce additional operators that can be regarded as partial face operators. To achieve this, it is necessary to express the face operators using the projectors $P_g$. However, because the definition of the face operator depends on the lattice orientation, we first need to consider the following sign function.
\begin{definition} \label{signof}
Let $(f,l) \in F \times E $, 
where  $l \in \partial f$. Define the sign function as follows
\begin{align}
    \operatorname{Sng}(f,l) := \begin{cases}
        \phantom{-}1, & \text{if } l \text{ is oriented in the counterclockwise direction along } \partial f, \\
        -1, & \text{if } l \text{ is oriented in the clockwise direction along } \partial f.
    \end{cases}
\end{align}
\end{definition}
Thus, for a face \(f\) where \(\partial f\) consists of the edges \(l_1, l_2, \ldots, l_n\), the operator \(B_f\) can be written as 
\begin{align} \label{faceoperator2}
    B_f = \sum_{g'_1 \cdots g'_n=0} \left( \bigotimes_{i=1 }^n P_{l_i}^{g_i}\right) \otimes \left( \bigotimes_{l \in E\setminus \partial f}\operatorname{Id}_l \right),
\end{align}
where $g'_i = g_i^{\operatorname{Sng}(f,l_i)}$. In Figure \ref{facedraw2}, we see how \(B_f\) looks on a specific face, using equation \ref{faceoperator2}.
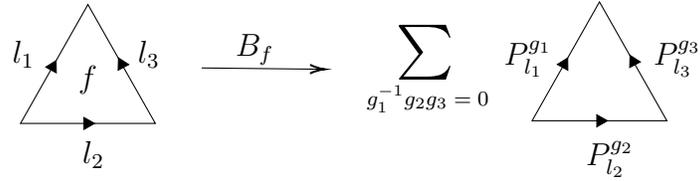
\begin{figure}[h!]
    \centering
\begin{tikzpicture}[x=0.5pt,y=0.5pt,yscale=-1,xscale=1]

\draw   (54,4) -- (105,94) -- (3,94) -- cycle ;
\draw  [fill={rgb, 255:red, 0; green, 0; blue, 0 }  ,fill opacity=1 ] (29.92,46.49) -- (30.03,55.67) -- (22.13,51.35) -- cycle ;
\draw  [fill={rgb, 255:red, 0; green, 0; blue, 0 }  ,fill opacity=1 ] (77.14,44.46) -- (84.85,49.44) -- (76.88,53.64) -- cycle ;
\draw  [fill={rgb, 255:red, 0; green, 0; blue, 0 }  ,fill opacity=1 ] (59,93.99) -- (51.01,98.51) -- (50.99,89.51) -- cycle ;
\draw    (140,52) -- (231,52.98) ;
\draw [shift={(233,53)}, rotate = 180.62] [color={rgb, 255:red, 0; green, 0; blue, 0 }  ][line width=0.75]    (10.93,-3.29) .. controls (6.95,-1.4) and (3.31,-0.3) .. (0,0) .. controls (3.31,0.3) and (6.95,1.4) .. (10.93,3.29)   ;
\draw   (441,2) -- (492,92) -- (390,92) -- cycle ;
\draw  [fill={rgb, 255:red, 0; green, 0; blue, 0 }  ,fill opacity=1 ] (416.92,44.49) -- (417.03,53.67) -- (409.13,49.35) -- cycle ;
\draw  [fill={rgb, 255:red, 0; green, 0; blue, 0 }  ,fill opacity=1 ] (464.14,42.46) -- (471.85,47.44) -- (463.88,51.64) -- cycle ;
\draw  [fill={rgb, 255:red, 0; green, 0; blue, 0 }  ,fill opacity=1 ] (447,91.99) -- (439.01,96.51) -- (438.99,87.51) -- cycle ;

\draw (164,24.4) node [anchor=north west][inner sep=0.75pt]    {$B_f$};
\draw (-5,29.4) node [anchor=north west][inner sep=0.75pt]    {$l_{1}$};
\draw (48,105.4) node [anchor=north west][inner sep=0.75pt]    {$l_{2}$};
\draw (90,29.4) node [anchor=north west][inner sep=0.75pt]    {$l_{3}$};
\draw (45,48.4) node [anchor=north west][inner sep=0.75pt]    {$f$};
\draw (367,29.4) node [anchor=north west][inner sep=0.75pt]    {$P_{l_{1}}^{g_{1}}$};
\draw (429,105.4) node [anchor=north west][inner sep=0.75pt]    {$P_{l_{2}}^{g_{2}}$};
\draw (480,29.4) node [anchor=north west][inner sep=0.75pt]    {$P_{l_{3}}^{g_{3}}$};
\draw (285,17.4) node [anchor=north west][inner sep=0.75pt]  [font=\LARGE]  {$\sum$};

\draw (263,63.4) node [anchor=north west][inner sep=0.75pt]  [font=\tiny]  {$g_{1}^{-1} g_{2} g_{3} =0$};
\end{tikzpicture}
    \caption{The action of the operator \(B_f\), using equation \ref{faceoperator2}.}
    \label{facedraw2}
\end{figure}
\begin{definition}[Operators $D_{f}^g$] \label{deffaceparcial}
    Let \(\Lambda\) be a region, \(g \in G\), and let \(f\) be a face such that \(f \not\subset \Lambda\) and \(\partial f \cap \Lambda = \{ l_1, \ldots, l_n \} \neq \emptyset\). The operator \(D_{f}^g: \cH_{\Lambda} \to \cH_{\Lambda}\) is given by
\begin{align} \label{partialfaceoperator}
    D_{f}^g = \sum_{gg'_1\cdots g'_n=0} \left( \bigotimes_{i=1}^n P_{l_i}^{g_i} \right) \otimes \left( \bigotimes_{l \in \Lambda \setminus \partial f} \operatorname{Id}_l \right),
\end{align}
where \(g'_i = g_i^{\operatorname{Sng}(f, l_i)}\). In Figure \ref{facedraw3}, we illustrate operator \(D_{f}^g\) on a specific face \(f\).
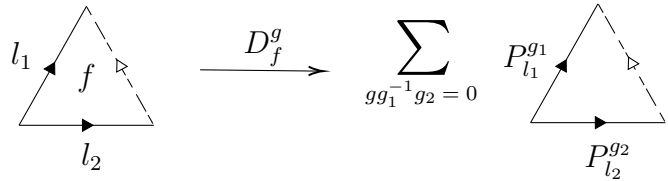
\begin{figure}[h!]
    \centering
\begin{tikzpicture}
[x=0.5pt,y=0.5pt,yscale=-1,xscale=1]

\draw  [fill={rgb, 255:red, 0; green, 0; blue, 0 }  ,fill opacity=1 ] (29.92,46.49) -- (30.03,55.67) -- (22.13,51.35) -- cycle ;
\draw  [fill={rgb, 255:red, 0; green, 0; blue, 0 }  ,fill opacity=1 ] (59,93.99) -- (51.01,98.51) -- (50.99,89.51) -- cycle ;
\draw    (140,52) -- (231,52.98) ;
\draw [shift={(233,53)}, rotate = 180.62] [color={rgb, 255:red, 0; green, 0; blue, 0 }  ][line width=0.75]    (10.93,-3.29) .. controls (6.95,-1.4) and (3.31,-0.3) .. (0,0) .. controls (3.31,0.3) and (6.95,1.4) .. (10.93,3.29)   ;
\draw  [fill={rgb, 255:red, 0; green, 0; blue, 0 }  ,fill opacity=1 ] (416.92,44.49) -- (417.03,53.67) -- (409.13,49.35) -- cycle ;
\draw  [fill={rgb, 255:red, 0; green, 0; blue, 0 }  ,fill opacity=1 ] (447,91.99) -- (439.01,96.51) -- (438.99,87.51) -- cycle ;
\draw    (54,4) -- (3,94) ;
\draw    (105,94) -- (3,94) ;
\draw  [dash pattern={on 3.75pt off 3pt on 7.5pt off 1.5pt}]  (54,4) -- (105,94) ;
\draw  [fill={rgb, 255:red, 255; green, 255; blue, 255 }  ,fill opacity=1 ] (77.14,44.46) -- (84.85,49.44) -- (76.88,53.64) -- cycle ;
\draw    (441,2) -- (390,92) ;
\draw    (492,92) -- (390,92) ;
\draw  [dash pattern={on 3.75pt off 3pt on 7.5pt off 1.5pt}]  (491.5,91) -- (440.5,1) ;
\draw  [fill={rgb, 255:red, 255; green, 255; blue, 255 }  ,fill opacity=1 ] (464.14,42.46) -- (471.85,47.44) -- (463.88,51.64) -- cycle ;

\draw (171,19.4) node [anchor=north west][inner sep=0.75pt]    {$D_{f}^g$};
\draw (-5,29.4) node [anchor=north west][inner sep=0.75pt]    {$l_{1}$};
\draw (48,105.4) node [anchor=north west][inner sep=0.75pt]    {$l_{2}$};
\draw (45,45.4) node [anchor=north west][inner sep=0.75pt]    {$f$};
\draw (367,29.4) node [anchor=north west][inner sep=0.75pt]    {$P_{l_{1}}^{g_{1}}$};
\draw (429,105.4) node [anchor=north west][inner sep=0.75pt]    {$P_{l_{2}}^{g_{2}}$};
\draw (285,12.4) node [anchor=north west][inner sep=0.75pt]  [font=\LARGE]  {$\sum$};

\draw (263,58.4) node [anchor=north west][inner sep=0.75pt]  [font=\tiny]  {$gg_{1}^{-1} g_{2}=0$};
\end{tikzpicture}
    \caption{The action of the operator \(D_{f}^g\). For the face \(f\), the edge represented by the dashed line is the one that does not belong to the region \(\Lambda\).}
    \label{facedraw3}
\end{figure}
\end{definition}
Now, we will consider two sets of operators that generate \(\mathfrak{C}(I)\). First, for each \(g \in G\) and each face \(f\) such that \(\partial f \cap I \neq \emptyset\), we consider the operator \(D_{f}^g\). On the left of Figure \ref{algebraC}, we can see an example of such operator.

For the second set of operators,  we consider the operators \(C_{v}^g\), for each \(g \in G\) and each vertex \(v \) that satisfies the following condition:
\begin{equation}\label{condicionv}
    \overline{s(v)} \cap I \neq \emptyset, \text{ and } v \text{ is either incident to two edges of } I \text{ or is in the interior of } \Lambda.
\end{equation} 
The algebra \(\mathfrak{C}(I)\) is defined as follows:
\begin{align}
    \mathfrak{C}(I) = \C^* \{ D_{f}^{g_1}, C_{v}^{g_2} \mid g_1,g_2 \in G,f \text{ such that } \partial f \cap I \neq \emptyset, v \text{ satisfied } \ref{condicionv} \}
\end{align}
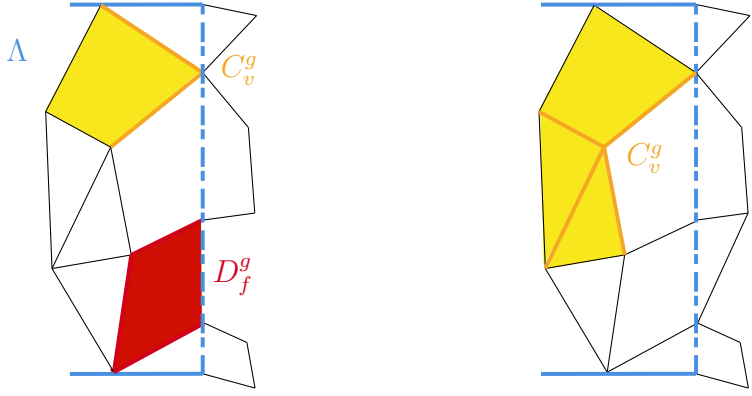
\begin{figure}[h!]
\centering
\begin{tikzpicture}[x=0.6pt,y=0.6pt,yscale=-1,xscale=1]

\draw  [color={rgb, 255:red, 255; green, 255; blue, 255 }  ,draw opacity=1 ][fill={rgb, 255:red, 248; green, 231; blue, 28 }  ,fill opacity=0.29 ] (441.1,62.16) -- (378.07,113.93) -- (336.54,91.22) -- (359.74,17.67) -- cycle ;
\draw  [color={rgb, 255:red, 255; green, 255; blue, 255 }  ,draw opacity=1 ][fill={rgb, 255:red, 255; green, 255; blue, 255 }  ,fill opacity=1 ] (436.95,66.74) -- (374.25,25.89) -- (379,18.6) -- (441.7,59.44) -- cycle ;
\draw  [color={rgb, 255:red, 255; green, 255; blue, 255 }  ,draw opacity=1 ][fill={rgb, 255:red, 248; green, 231; blue, 28 }  ,fill opacity=0.29 ] (339.82,189.83) -- (337.63,92.41) -- (379.99,115.94) -- cycle ;
\draw  [color={rgb, 255:red, 255; green, 255; blue, 255 }  ,draw opacity=1 ][fill={rgb, 255:red, 248; green, 231; blue, 28 }  ,fill opacity=0.29 ] (377.37,115.09) -- (390.54,182.74) -- (342.01,191.21) -- cycle ;
\draw  [color={rgb, 255:red, 255; green, 255; blue, 255 }  ,draw opacity=1 ][fill={rgb, 255:red, 255; green, 255; blue, 255 }  ,fill opacity=1 ] (372,24.4) -- (334.68,94.62) -- (320.21,86.93) -- (357.53,16.71) -- cycle ;
\draw  [color={rgb, 255:red, 255; green, 255; blue, 255 }  ,draw opacity=1 ][fill={rgb, 255:red, 248; green, 231; blue, 28 }  ,fill opacity=0.29 ] (124.08,66.27) -- (58.3,121.45) -- (23.05,98.29) -- (47.6,16.01) -- cycle ;
\draw  [color={rgb, 255:red, 255; green, 255; blue, 255 }  ,draw opacity=1 ][fill={rgb, 255:red, 255; green, 255; blue, 255 }  ,fill opacity=1 ] (62.5,21.57) -- (25.18,91.8) -- (10.71,84.11) -- (48.03,13.88) -- cycle ;
\draw  [color={rgb, 255:red, 255; green, 255; blue, 255 }  ,draw opacity=1 ][fill={rgb, 255:red, 255; green, 255; blue, 255 }  ,fill opacity=1 ] (21.54,89.63) -- (67.11,114.41) -- (61.84,124.1) -- (16.27,99.32) -- cycle ;
\draw  [color={rgb, 255:red, 255; green, 255; blue, 255 }  ,draw opacity=1 ][fill={rgb, 255:red, 208; green, 15; blue, 2 }  ,fill opacity=0.19 ] (124.42,159.7) -- (125.08,226.51) -- (67.48,256.4) -- (44.66,201.08) -- cycle ;
\draw  [color={rgb, 255:red, 255; green, 255; blue, 255 }  ,draw opacity=1 ][fill={rgb, 255:red, 255; green, 255; blue, 255 }  ,fill opacity=1 ] (80.12,184.71) -- (69.12,253.84) -- (29.62,247.56) -- (40.61,178.43) -- cycle ;
\draw    (435.65,161) -- (391.2,182) ;
\draw    (337.2,91.6) -- (341.2,190.6) ;
\draw    (391.2,182) -- (341.2,190.6) ;
\draw    (125.2,24) -- (159.2,31) ;
\draw    (158.2,266) -- (153.05,237.25) ;
\draw    (125.2,257) -- (158.2,266) ;
\draw    (26.2,91.6) -- (30.2,190.6) ;
\draw [color={rgb, 255:red, 74; green, 144; blue, 226 }  ,draw opacity=1 ][line width=1.5]    (41.45,24.1) -- (125.2,24) ;
\draw [color={rgb, 255:red, 74; green, 144; blue, 226 }  ,draw opacity=1 ][line width=1.5]    (41.45,257.1) -- (125.2,257) ;
\draw    (61,24.4) -- (26.2,91.6) ;
\draw    (67.2,114) -- (80.2,182) ;
\draw    (80.2,182) -- (30.2,190.6) ;
\draw [color={rgb, 255:red, 208; green, 2; blue, 27 }  ,draw opacity=1 ][line width=1.5]    (69.2,256) -- (80.2,182) ;
\draw    (69.2,256) -- (30.2,190.6) ;
\draw    (158,155.6) -- (154,101.6) ;
\draw    (125.2,67) -- (154,101.6) ;
\draw [color={rgb, 255:red, 245; green, 166; blue, 35 }  ,draw opacity=1 ][line width=1.5]    (61,24.4) -- (125.2,67) ;
\draw [color={rgb, 255:red, 245; green, 166; blue, 35 }  ,draw opacity=1 ][line width=1.5]    (125.2,67) -- (67.2,114) ;
\draw    (26.2,91.6) -- (67.2,114) ;
\draw    (67.2,114) -- (30.2,190.6) ;
\draw    (125.2,67) -- (159.2,31) ;
\draw [color={rgb, 255:red, 208; green, 2; blue, 27 }  ,draw opacity=1 ][line width=1.5]    (124.64,160.13) -- (80.2,182) ;
\draw    (158,155.6) -- (124.64,160.13) ;
\draw    (126.2,225) -- (153.05,237.25) ;
\draw [color={rgb, 255:red, 208; green, 2; blue, 27 }  ,draw opacity=1 ][line width=1.5]    (126.2,225) -- (69.2,256) ;
\draw    (372,24.4) -- (436.2,67) ;
\draw    (437.2,225) -- (380.2,256) ;
\draw [color={rgb, 255:red, 74; green, 144; blue, 226 }  ,draw opacity=1 ][line width=1.5]    (338.45,24.1) -- (436.2,24) ;
\draw    (372,24.4) -- (337.2,91.6) ;
\draw    (436.2,67) -- (470.2,31) ;
\draw    (436.2,24) -- (470.2,31) ;
\draw    (380.2,256) -- (391.2,182) ;
\draw    (380.2,256) -- (341.2,190.6) ;
\draw    (469.2,266) -- (464.05,237.25) ;
\draw    (436.2,257) -- (469.2,266) ;
\draw [color={rgb, 255:red, 74; green, 144; blue, 226 }  ,draw opacity=1 ][line width=1.5]    (338.45,257.1) -- (436.2,257) ;
\draw    (437.2,225) -- (469,155.6) ;
\draw    (437.2,225) -- (441.56,226.99) -- (464.05,237.25) ;
\draw    (469,155.6) -- (435.64,160.13) ;
\draw [color={rgb, 255:red, 74; green, 144; blue, 226 }  ,draw opacity=1 ][line width=1.5]  [dash pattern={on 3.75pt off 3pt on 7.5pt off 1.5pt}]  (436.2,24) -- (436.2,257) ;
\draw [color={rgb, 255:red, 74; green, 144; blue, 226 }  ,draw opacity=1 ][line width=1.5]  [dash pattern={on 3.75pt off 3pt on 7.5pt off 1.5pt}]  (125.2,24) -- (125.2,257) ;
\draw    (436.2,67) -- (465,101.6) ;
\draw    (469,155.6) -- (465,101.6) ;
\draw [color={rgb, 255:red, 245; green, 166; blue, 35 }  ,draw opacity=1 ][line width=1.5]    (378.2,114) -- (436.2,67) ;
\draw [color={rgb, 255:red, 245; green, 166; blue, 35 }  ,draw opacity=1 ][line width=1.5]    (337.2,91.6) -- (378.2,114) ;
\draw [color={rgb, 255:red, 245; green, 166; blue, 35 }  ,draw opacity=1 ][line width=1.5]    (378.2,114) -- (391.2,182) ;
\draw [color={rgb, 255:red, 245; green, 166; blue, 35 }  ,draw opacity=1 ][line width=1.5]    (378.2,114) -- (341.2,190.6) ;
\draw (390.75,110) node [anchor=north west][inner sep=0.75pt]  [color={rgb, 255:red, 245; green, 166; blue, 35 }  ,opacity=1 ]  {$C_{v}^{g}$};
\draw (135,54.4) node [anchor=north west][inner sep=0.75pt]  [color={rgb, 255:red, 245; green, 166; blue, 35 }  ,opacity=1 ]  {$C_{v}^{g}$};
\draw (130,182.4) node [anchor=north west][inner sep=0.75pt]  [color={rgb, 255:red, 245; green, 166; blue, 35 }  ,opacity=1 ]  {$\textcolor[rgb]{0.82,0.01,0.11}{D_{f}^{g}}$};
\draw (0,44.4) node [anchor=north west][inner sep=0.75pt]  [color={rgb, 255:red, 74; green, 144; blue, 226 }  ,opacity=1 ]  {$\Lambda $};
\end{tikzpicture}
    \caption{Examples of the operator \(C_{v}^g\) and \(D_{f}^g\) for a rough interval.}
    \label{algebraC}
\end{figure}
Thus, in $\mathfrak{C}(I)$, we identify two types of operators \(C_{v}^g\): those where $v \in I$ (as illustrated on the left side of Figure \ref{algebraC}, which shows an example of such an operator), and those where \(C_{v}^g\) are located in the interior of $\Lambda$ (an example of these interior operators is shown on the right side of Figure \ref{algebraC}). The inclusion of these interior operators in the algebra \(\mathfrak{C}(I)\) introduces additional complexities for rough intervals. The main goal of defining the algebra \(\mathfrak{C}(I)\) is to ensure that $\mathfrak{B}\left(\Lambda \Subset \Delta\right) = \mathfrak{C}(I) p_{\Delta}$ holds. However, for this equality to be valid, the operators \(C_{v}^g\) must commute with every vertex operator in $\Delta$. To ensure this commutativity, an additional condition must be imposed on the region $\Lambda$, which results in a restriction on the types of regions that can be considered.

\begin{definition} \label{definicionlarge}
Let $\Lambda$ be a region and $I$ be a rough boundary of $\partial \Lambda$. We say that $\Lambda$ is \emph{sufficiently large} for $I$ if, for each vertex $v$ such that \(C_{v}^g \in \mathfrak{C}(I)\), if $v \in \partial f$ and $f \cap I = \emptyset$, then $f \subset \Lambda$.
\end{definition}

In Figure \ref{suffiectl}, on the left, we see a region $\Lambda$ that is not sufficiently large with respect to the interval $I$. Then, we see a region $\Delta$ such that $\Lambda \Subset \Delta$, but in this case, we cannot guarantee that for all $g, h \in G$, the operator \(C_v^g \in \mathfrak{C}(I)\) commutes with the operator \(A_u^h\). For this reason, we will not work with this type of region. Instead, on the right of Figure \ref{suffiectl}, we see a region $\Lambda'$ that is sufficiently large with respect to the interval $I$.
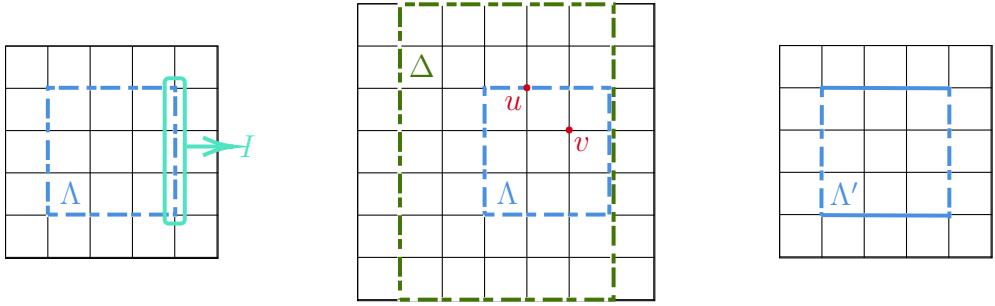
\begin{figure}[h!]
    \centering
\begin{tikzpicture}[x=0.9pt,y=0.9pt,yscale=-1,xscale=1]

\draw  [draw opacity=0] (393,50) -- (493.5,50) -- (493.5,150.5) -- (393,150.5) -- cycle ; \draw   (393,50) -- (393,150.5)(413,50) -- (413,150.5)(433,50) -- (433,150.5)(453,50) -- (453,150.5)(473,50) -- (473,150.5)(493,50) -- (493,150.5) ; \draw   (393,50) -- (493.5,50)(393,70) -- (493.5,70)(393,90) -- (493.5,90)(393,110) -- (493.5,110)(393,130) -- (493.5,130)(393,150) -- (493.5,150) ; \draw    ;
\draw  [color={rgb, 255:red, 255; green, 255; blue, 255 }  ,draw opacity=1 ][line width=1.5]  (413.25,70.25) -- (473.25,70.25) -- (473.25,130.25) -- (413.25,130.25) -- cycle ;
\draw [color={rgb, 255:red, 74; green, 144; blue, 226 }  ,draw opacity=1 ][line width=1.5]  [dash pattern={on 3.75pt off 3pt on 7.5pt off 1.5pt}]  (473.25,69.5) -- (473.25,131.75) ;
\draw  [draw opacity=0] (193.5,30.25) -- (334,30.25) -- (334,170.75) -- (193.5,170.75) -- cycle ; \draw   (193.5,30.25) -- (193.5,170.75)(213.5,30.25) -- (213.5,170.75)(233.5,30.25) -- (233.5,170.75)(253.5,30.25) -- (253.5,170.75)(273.5,30.25) -- (273.5,170.75)(293.5,30.25) -- (293.5,170.75)(313.5,30.25) -- (313.5,170.75)(333.5,30.25) -- (333.5,170.75) ; \draw   (193.5,30.25) -- (334,30.25)(193.5,50.25) -- (334,50.25)(193.5,70.25) -- (334,70.25)(193.5,90.25) -- (334,90.25)(193.5,110.25) -- (334,110.25)(193.5,130.25) -- (334,130.25)(193.5,150.25) -- (334,150.25)(193.5,170.25) -- (334,170.25) ; \draw    ;
\draw  [color={rgb, 255:red, 255; green, 255; blue, 255 }  ,draw opacity=1 ][line width=1.5]  (253.75,70.25) -- (313.75,70.25) -- (313.75,130.25) -- (253.75,130.25) -- cycle ;
\draw  [color={rgb, 255:red, 255; green, 255; blue, 255 }  ,draw opacity=1 ][line width=1.5]  (213.25,30.25) -- (313.75,30.25) -- (313.75,170) -- (213.25,170) -- cycle ;
\draw  [color={rgb, 255:red, 65; green, 117; blue, 5 }  ,draw opacity=1 ][dash pattern={on 3.75pt off 3pt on 7.5pt off 1.5pt}][line width=1.5]  (213.5,30.5) -- (314.5,30.5) -- (314.5,170.25) -- (213.5,170.25) -- cycle ;
\draw  [color={rgb, 255:red, 74; green, 144; blue, 226 }  ,draw opacity=1 ][dash pattern={on 3.75pt off 3pt on 7.5pt off 1.5pt}][line width=1.5]  (253.5,70) -- (312.5,70) -- (312.5,130) -- (253.5,130) -- cycle ;
\draw  [draw opacity=0] (27,50.25) -- (127.5,50.25) -- (127.5,150.75) -- (27,150.75) -- cycle ; \draw   (27,50.25) -- (27,150.75)(47,50.25) -- (47,150.75)(67,50.25) -- (67,150.75)(87,50.25) -- (87,150.75)(107,50.25) -- (107,150.75)(127,50.25) -- (127,150.75) ; \draw   (27,50.25) -- (127.5,50.25)(27,70.25) -- (127.5,70.25)(27,90.25) -- (127.5,90.25)(27,110.25) -- (127.5,110.25)(27,130.25) -- (127.5,130.25)(27,150.25) -- (127.5,150.25) ; \draw    ;
\draw  [color={rgb, 255:red, 255; green, 255; blue, 255 }  ,draw opacity=1 ][line width=1.5]  (47,70.25) -- (107,70.25) -- (107,130.25) -- (47,130.25) -- cycle ;
\draw  [color={rgb, 255:red, 74; green, 144; blue, 226 }  ,draw opacity=1 ][dash pattern={on 3.75pt off 3pt on 7.5pt off 1.5pt}][line width=1.5]  (47,70) -- (107.25,70) -- (107.25,130) -- (47,130) -- cycle ;
\draw  [color={rgb, 255:red, 80; green, 227; blue, 194 }  ,draw opacity=1 ][line width=1.5]  (102.25,67.4) .. controls (102.25,66.35) and (103.1,65.5) .. (104.15,65.5) -- (109.85,65.5) .. controls (110.9,65.5) and (111.75,66.35) .. (111.75,67.4) -- (111.75,132.35) .. controls (111.75,133.4) and (110.9,134.25) .. (109.85,134.25) -- (104.15,134.25) .. controls (103.1,134.25) and (102.25,133.4) .. (102.25,132.35) -- cycle ;
\draw [color={rgb, 255:red, 80; green, 227; blue, 194 }  ,draw opacity=1 ][line width=1.5]    (112.5,97.77) -- (131.6,97.8) ;
\draw [shift={(134.6,97.8)}, rotate = 180.09] [color={rgb, 255:red, 80; green, 227; blue, 194 }  ,draw opacity=1 ][line width=1.5]    (14.21,-4.28) .. controls (9.04,-1.82) and (4.3,-0.39) .. (0,0) .. controls (4.3,0.39) and (9.04,1.82) .. (14.21,4.28)   ;
\draw [color={rgb, 255:red, 74; green, 144; blue, 226 }  ,draw opacity=1 ][line width=1.5]    (412.25,70) -- (473.25,70.25) ;
\draw [color={rgb, 255:red, 74; green, 144; blue, 226 }  ,draw opacity=1 ][line width=1.5]    (412.75,130.25) -- (473.75,130.5) ;
\draw [color={rgb, 255:red, 74; green, 144; blue, 226 }  ,draw opacity=1 ][line width=1.5]  [dash pattern={on 3.75pt off 3pt on 7.5pt off 1.5pt}]  (413,69.75) -- (412.75,130.5) ;
\draw  [color={rgb, 255:red, 208; green, 2; blue, 27 }  ,draw opacity=1 ][fill={rgb, 255:red, 208; green, 2; blue, 27 }  ,fill opacity=1 ] (274.37,68.83) .. controls (275,69.31) and (275.12,70.21) .. (274.64,70.84) .. controls (274.16,71.47) and (273.26,71.59) .. (272.63,71.11) .. controls (272,70.63) and (271.88,69.73) .. (272.36,69.1) .. controls (272.84,68.47) and (273.74,68.35) .. (274.37,68.83) -- cycle ;
\draw  [color={rgb, 255:red, 208; green, 2; blue, 27 }  ,draw opacity=1 ][fill={rgb, 255:red, 208; green, 2; blue, 27 }  ,fill opacity=1 ] (294.37,88.83) .. controls (295,89.31) and (295.12,90.21) .. (294.64,90.84) .. controls (294.16,91.47) and (293.26,91.59) .. (292.63,91.11) .. controls (292,90.63) and (291.88,89.73) .. (292.36,89.1) .. controls (292.84,88.47) and (293.74,88.35) .. (294.37,88.83) -- cycle ;

\draw (136.45,91.4) node [anchor=north west][inner sep=0.75pt]  [color={rgb, 255:red, 80; green, 227; blue, 194 }  ,opacity=1 ]  {$I$};
\draw (261.75,73) node [anchor=north west][inner sep=0.75pt]    {$\textcolor[rgb]{0.82,0.01,0.11}{u}$};
\draw (294.5,92.15) node [anchor=north west][inner sep=0.75pt]    {$\textcolor[rgb]{0.82,0.01,0.11}{v}$};
\draw (51.5,113.4) node [anchor=north west][inner sep=0.75pt]    {$\textcolor[rgb]{0.29,0.56,0.89}{\Lambda }$};
\draw (258,113.4) node [anchor=north west][inner sep=0.75pt]    {$\textcolor[rgb]{0.29,0.56,0.89}{\Lambda }$};
\draw (415.5,113.4) node [anchor=north west][inner sep=0.75pt]    {$\textcolor[rgb]{0.29,0.56,0.89}{\Lambda' }$};
\draw (216.5,53.4) node [anchor=north west][inner sep=0.75pt]    {$\textcolor[rgb]{0.25,0.46,0.02}{\Delta }$};
\end{tikzpicture}
   \caption{Example of a region $\Lambda$ that is not sufficiently large for $I$ and a region $\Lambda'$ that is sufficiently large for $I$.}
    \label{suffiectl}
\end{figure}

Taking into account the previous discussion, we can prove that the Twisted Quantum Double model satisfies the LTO2, LTO3, and LTO4 axioms for rough intervals. For brevity, proofs are omitted as they closely follow the reasoning in Theorems \ref{teoLTO2}, \ref{teoLTO3}, and \ref{teoLTO4}. We now state the following result.

\begin{theorem}
    The Twisted Quantum Double model for a finite group $G$ satisfies the following:
\begin{itemize}
    \item[$\imath)$] If $\Lambda \Subset \Delta$, with $I = \partial \Lambda \cap \partial \Delta$ being a rough interval and $\Lambda$ sufficiently large for $I$, then $p_{\Delta} \mathfrak{A}(\Lambda) p_{\Delta} = \mathfrak{B}\left(\Lambda \Subset \Delta\right) p_{\Delta}$.
    \item[$\imath \imath)$] Whenever $\Lambda_1 \subset \Lambda_2 \Subset \Delta$ with $I = \partial \Lambda_1 \cap \partial \Delta = \partial \Lambda_2 \cap \partial \Delta$ a rough interval and $\Lambda_1, \Lambda_2$ sufficiently large for $I$, then $\mathfrak{B}\left(\Lambda_1 \Subset \Delta\right) = \mathfrak{B}\left(\Lambda_2 \Subset \Delta\right)$.
    \item[$\imath \imath \imath )$] Whenever $\Lambda \Subset \Delta_1 \subset \Delta_2$ with $I = \partial \Lambda \cap \partial \Delta_1 = \partial \Lambda \cap \partial \Delta_2$ a rough interval and $\Lambda$ sufficiently large for $I$, then if $x \in \mathfrak{B}\left(\Lambda \Subset \Delta_1\right)$ with $x p_{\Delta_2} = 0$, then $x = 0$.
\end{itemize}
\end{theorem}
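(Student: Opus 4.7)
The plan is to reproduce, almost verbatim, the three-step arguments of Theorems~\ref{teoLTO2}, \ref{teoLTO3}, and \ref{teoLTO4}, but with two structural modifications: first, replace the smooth boundary algebra $\mathfrak{D}(I)$ by its rough analogue $\mathfrak{C}(I)$ throughout; second, replace every occurrence of the condition ``$s(v)\subset \Lambda$'' by the stricter condition ``$\overline{s(v)}\subset \Lambda$'' appearing in the new definition~\eqref{plambda2} of $p_\Lambda$. The sufficient largeness hypothesis of Definition~\ref{definicionlarge} is used exactly to guarantee that the partial vertex operators $C_v^g \in \mathfrak{C}(I)$ with $v$ in the interior of $\Lambda$ commute with every $A_u$ appearing in $p_\Delta$, which is essential for the boundary algebra identifications below to make sense.

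For item~$(\imath)$, I establish the chain
\[
\mathfrak{C}(I)p_\Delta \;\subseteq\; \mathfrak{B}(\Lambda\Subset \Delta) \;\subseteq\; p_\Delta \mathfrak{A}(\Lambda)p_\Delta \;\subseteq\; \mathfrak{C}(I)p_\Delta.
\]
The first containment reduces to checking that each generator $C_v^g$ or $D_f^g$ commutes with $p_{\Delta'}$ for every $\Delta'$ having $\partial\Lambda\cap\partial\Delta'=I$; this follows from the identities $L_g P_h = P_{gh} L_g$ and $R_g P_h = P_{hg} R_g$ together with the $3$-cocycle condition, with sufficient largeness ensuring that the closed stars of all vertices $u$ with $A_u$ appearing in $p_{\Delta'}$ interact with $C_v^g$ only through triangles on which the restricted cocycle phases agree. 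The second containment is formal and identical to the smooth case. The third, which is the main work, mirrors Theorem~\ref{teoLTO2} in three steps: \textbf{Step 1}, for $x=L_{c_1}P_{c_2}$ with $p_\Delta x p_\Delta\neq 0$, iteratively apply vertex operators $A_{v_i}^{g_i}$ (absorbed by $p_\Delta$) to kill the $L$-factors of $L_{c_1}$ on every edge not adjacent to $I$, then repackage the leftover $L$-factors as a product of operators $C_v^g$ with $v$ satisfying~\eqref{condicionv}; \textbf{Step 2}, via a spanning-tree argument on the portion of $\Lambda$ away from $I$, show that any two flat colorings agreeing on $I$ and on the colorings of faces crossing $I$ yield the same $p_\Delta P_d p_\Delta$; \textbf{Step 3}, use the identity expressing $\prod_{f\subset\Lambda}B_f$, projectors onto the $I$-edge colorings, and the partial-face operators $D_f^g$ as a sum of such flat colorings to conclude that $p_\Delta x p_\Delta\in\mathfrak{C}(I)p_\Delta$.

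Items~$(\imath\imath)$ and~$(\imath\imath\imath)$ follow almost immediately from~$(\imath)$. For~$(\imath\imath)$, since $\mathfrak{C}(I)$ depends only on $I$ and not on $\Lambda_1$ or $\Lambda_2$, we obtain $\mathfrak{B}(\Lambda_1\Subset \Delta) = \mathfrak{C}(I)p_\Delta = \mathfrak{B}(\Lambda_2\Subset \Delta)$. For~$(\imath\imath\imath)$, expand $x\ket{c}$ for a flat coloring $c$ of $\Delta_2$ as a linear combination $\sum_k a_k \prod_j C_{v_j}^{g_{k,j}} \prod_i D_{f_i}^{h_{k,i}}\ket{c}$, apply $p_{\Delta_2}=\prod_{\overline{s(v)}\subset\Delta_2} A_v\prod_{f\subset\Delta_2}B_f$, and invoke the linear independence of the resulting family of flat vectors—which differ on at least one interior edge of $\Delta_2$—to force each $a_k$ to vanish. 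The principal obstacle throughout is the additional bookkeeping required by the truncated cocycle phases $\alpha_v^g|_\Lambda$ of Definition~\ref{defstarparcial}: restricting the product in~\eqref{starfase} to the triangles lying entirely in $\Lambda$ breaks the homomorphism property enjoyed by the full phase, and the sufficient largeness condition is precisely what ensures that enough auxiliary triangles adjacent to $I$ survive the truncation so that the commutation relations driving Steps~1 and~2 continue to hold.
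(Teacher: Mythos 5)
Your proposal follows essentially the same route as the paper, which itself omits the detailed argument and simply asserts that the proofs carry over from Theorems~\ref{teoLTO2}, \ref{teoLTO3}, and \ref{teoLTO4} after replacing $\mathfrak{D}(I)$ by $\mathfrak{C}(I)$, using the redefined projectors~\eqref{plambda2}, and invoking the sufficient-largeness condition of Definition~\ref{definicionlarge} to secure the needed commutation of the interior operators $C_v^g$ with the vertex operators in $p_\Delta$. Your three-step adaptation and the treatment of items $(\imath\imath)$ and $(\imath\imath\imath)$ match the intended argument, and are in fact somewhat more explicit than what the paper records.
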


\vspace{0.5cm}
\textbf{Acknowledgement.}  S.C. is partially supported by NSF CCF-2006667 and DMS-2304990, the ORNL-led Quantum Science Center, and ARO MURI. C.G. was partially supported by grant INV-2023-162-2830 from the School of Science at Universidad de los Andes.
D.R. was partially supported by the grant INV-2024-191-3125 from the School of Science of Universidad de los Andes.

\end{document}